\documentclass[11pt,twoside]{article}%
\usepackage[colorlinks=true, bookmarksnumbered=true, bookmarksopen=true,
bookmarksopenlevel=3, pdfstartview=FitH, linkcolor=cyan, pdfmenubar=true,
pdftoolbar=true, bookmarks=true,citecolor=cyan, urlcolor=magenta,
filecolor=cyan,menucolor=black,plainpages=false,pdfpagelabels]{hyperref}
\usepackage[lmargin=2cm,tmargin=2cm,bmargin=2cm,rmargin=2cm]{geometry}
\usepackage{lineno,hyperref}
\usepackage{amsfonts}
\usepackage[latin1]{inputenc}
\usepackage{amssymb,latexsym}
\usepackage{amsmath,amsthm}
\usepackage[Sonny]{fncychap}
\usepackage{enumerate}
\usepackage[dvips]{color,graphicx}
\usepackage{amsbsy,amssymb,amsmath}
\usepackage{fancyhdr}
\usepackage{graphicx,color}
\usepackage{enumerate}
\usepackage{mathtools}
\usepackage{tikz}
\usepackage{pstricks}
\usepackage{pstricks-add}
\usepackage{pst-all,pst-math,pst-plot}
\usepackage{textcomp}
\usepackage{amsmath}
\usepackage{amssymb}
\usepackage{graphicx}%
\newtheorem{theorem}{Theorem}[section]

\newtheorem{lemma}[theorem]{Lemma}
\newtheorem{remark}{Remark}[section]

\numberwithin{equation}{section}

\begin{document}

\title{On the global well-posedness and self-similar solutions for a nonlinear
elliptic problem with a dynamic boundary condition}
\author{{{Lucas C. F. Ferreira}{\thanks{LCFF was supported by CNPq, grant:
312484/2023-2, Brazil. (corresponding author) }}}\\{\small State University of Campinas, IMECC-Department of Mathematics,} \\{\small {Rua S\'{e}rgio Buarque de Holanda, 651, CEP 13083-859, Campinas-SP,
Brazil.}}\\{\small \texttt{Email:lcff@ime.unicamp.br}}\vspace{0.5cm}\\{{Narayan V. Machaca-Le\'{o}n} {\thanks{NVML was supported by CAPES (Finance Code
001), grant: 88887.682784/2022, Brazil.}}}\\{\small State University of Campinas, IMECC-Department of Mathematics,} \\{\small {Rua S\'{e}rgio Buarque de Holanda, 651, CEP 13083-859, Campinas-SP,
Brazil.}}\\{\small \texttt{Email:nvmleon@gmail.com}}}
\date{}
\maketitle

\begin{abstract}
We are concerned with a semilinear elliptic equation in the half-space,
subject to a nonlinear dynamic boundary condition. We establish the global
well-posedness of solutions in a new setting for the problem, namely the
framework of Morrey spaces. These are strictly larger than $L^{p}$ and
weak-$L^{p}$ spaces, accommodating a broader class of rough initial data,
including homogeneous and nondecaying (at infinity) profiles. In our analysis,
we consider functional spaces invariant under the natural scaling of the
problem, which enables the construction of self-similar solutions. To achieve
this, we need to derive key estimates in Morrey spaces for certain interior
and boundary operators that arise from the corresponding integral formulation.
Furthermore, we obtain some qualitative properties of the solutions, such as
positivity, symmetry, and asymptotic stability. Leveraging this last property,
we show the existence of self-similar attractor basins and construct a class
of solutions that are asymptotically self-similar.

\

\noindent\textbf{AMS MSC:} 35J61; 35J66; 35A01; 35A02; 35B06; 35B07; 35C06;
35B40; 42B35 \vspace{.2cm}

\noindent\textbf{Keywords:} Nonlinear elliptic equations; Dynamic boundary
conditions; Well-posedness; Self-similar solutions; Symmetries; Asymptotic
stability; Morrey spaces

\end{abstract}

\pagestyle{myheadings} \markboth{ \small{{\sc l.c.f. ferreira  and  n.v. machaca-le\'{o}n}\hspace{6.0cm}}}{\small{\hspace{3.6cm}{\sc An elliptic equation with dynamic boundary condition}}}

\section{Introduction}

\label{Sec1}\hspace{0.3cm}We consider the semilinear elliptic problem with a
dynamic boundary condition in the half-space $\mathbb{R}_{+}^{n}$
\begin{equation}%
\begin{cases}
-\Delta u=u|u|^{p_{1}-1}, & x\in\mathbb{R}_{+}^{n},\text{ }t>0,\\
\partial_{t}u+\partial_{\nu}u=u|u|^{p_{2}-1}, & x\in\partial\mathbb{R}_{+}%
^{n},\text{ }t>0.\\
u(x,0)=\varphi(x^{\prime}), & x=(x^{\prime},0)\in\partial\mathbb{R}_{+}^{n},
\end{cases}
\label{P1}%
\end{equation}
where $n\geq3$, $p_{1},p_{2}>1$, $\mathbb{R}_{+}^{n}=\{(x^{\prime},x_{n}%
)\in\mathbb{R}^{n}:x_{n}>0\}$, and $\partial\mathbb{R}_{+}^{n}=\mathbb{R}%
^{n-1}.$ The operator $\Delta$ denotes the Laplacian with respect to the
spatial variables, $\partial_{t}=\partial/\partial t$, and $\partial_{\nu
}=-\partial/\partial x_{n}$ is the outward normal derivative. The initial data
$\varphi(x^{\prime})$ prescribes the boundary condition at time $t=0$.

Problems involving semilinear elliptic equations with dynamic boundary
conditions arise naturally in various physical models, such as heat conduction
with surface effects, diffusion processes with memory, diffusion of solute
from a well-stirred fluid, reaction-diffusion systems in heterogeneous media
(see \cite{Crank1975},\cite{fila2015existence},\cite{giga2018dynamic}%
,\cite{Latorre-Segura2018} and their references). The inclusion of a time
derivative on the boundary reflects the presence of inertia or storage effects
at the interface, leading to a richer and more realistic modeling of dynamic
interactions between the domain and its boundary. In comparison with their
stationary counterpart, they remain less understood and continue to attract
considerable attention, especially in the nonlinear regime.

The study of such problems poses several mathematical challenges due to the
interplay between the elliptic nature of the equation in the domain and the
parabolic (or pseudo-parabolic) behavior induced by the boundary condition. In
particular, the nonlinear structure of the boundary condition introduces
significant difficulties in the analysis of well-posedness, blow-up phenomena,
and long-time behavior of solutions, among other properties.

A related problem is given by
\begin{equation}%
\begin{cases}
-\Delta u=f(x,u), & x\in\Omega,t>0\\
\partial_{t}u+\partial_{\nu}u=g(x,u), & x\in\Omega,t>0\\
u(x,0)=\varphi(x^{\prime}), & x=(x^{\prime},0)\in\partial\Omega
\end{cases}
\label{P2}%
\end{equation}
where $f$ and $g$ are given functions and $\Omega\subset\mathbb{R}^{n}$ is a
smooth domain. This type of system has been studied from both an abstract and
applied perspective. A classical framework for analyzing nonlinear boundary
value problems of this type is presented in the seminal work of
Lions~\cite{lions1969quelques}, where the author studied (\ref{P2})~in bounded
domains $\Omega$, specifically for the case $f=0$, $g(u)=-u\left\vert
u\right\vert ^{p-1}$ and $\varphi\in H_{2,2}^{1/2}(\partial\Omega)$, obtaining
existence-uniqueness results by means of the theory of maximal monotone
operators. Further development of this approach for a problem involving a
dynamic boundary condition with a second-order time derivative $\partial
_{tt}u$ was carried out by D\'{\i}az and Jim\'{e}nez \cite{diaz1984aplicacion}%
. A more conceptual contribution is due to T. Hintermann
\cite{hintermann1989evolution}, who formalized the notion of dynamic boundary
operators within the framework of semigroups in the context of Sobolev
$W_{p}^{s}$ and Besov $B_{p,p}^{s}$ spaces. Considering $f=f(x)$ and $g=g(x)$
(linear case) and $\varphi\in$ $B_{p,p}^{2-1/p}(\partial\Omega)$ with
$1<p<\infty,$ he proved that (\ref{P2}) is well-posed in the Sobolev space
$W_{p}^{2}(\Omega)$ by employing pseudodifferential operators and
Mikhlin-H\"{o}rmander multiplier theorem. The study of nonlinear elliptic
systems with dynamic boundary conditions was advanced by Escher
\cite{escher1992nonlinear}, where the author established local and global
existence-uniqueness results of $W_{p}^{1}(\Omega)$-weak solutions by
considering bounded domains, suitable growth conditions on the nonlinearities,
and initial data $\varphi\in B_{p,p}^{1-1/p}(\partial\Omega)$ for $p>n$.
Further studies by Kirane \cite{kirane1992blow} and Kirane, Nabana, and
Pohozaev \cite{kirane2004nonexistence} focused on blow-up phenomena and
nonexistence results for problems with semilinear dynamic boundary conditions
of parabolic and hyperbolic type in the framework of Bessel potential and
Besov spaces. A Fujita-type result in the setting of nonnegative bounded and
uniformly continuous solutions, i.e. $BUC_{+}(\overline{\Omega})$, was
established by Amann and Fila \cite{Fila1997} by considering (\ref{P2}) with
$\Omega=\mathbb{R}_{+}^{n}$, $f=0$, $g=u^{p},$ and $\varphi\in BUC_{+}%
(\overline{\Omega})$. Considering also the case $f=0,$ Fila and Quittner
\cite{fila1997global} provided valuable insights into global solutions of the
Laplace equation with nonlinear boundary conditions and their connection with
the stationary regime. They investigated the boundedness and derived a priori
estimates for global solutions in bounded domains $\Omega,$ assuming
$\varphi\in H_{2}^{1/2}(\partial\Omega)$ and $g=g(u)$ exhibits power-type
growth in $u$ of order $p$, with $1<p<(3n+1)/(3n-5)$ and $n\geq2.$ These
results were then used by them to establish the existence of sign-changing
stationary solutions. More recently, a series of contributions by Fila, Ishige
and Kawakami have significantly expanded the theory. Their works addressed
various aspects such as existence, uniqueness, minimal and positive solutions,
and asymptotic behavior of the problem (see \cite{fila2013large}%
,\cite{fila2015existence},\cite{fila2016minimal},\cite{fila2017exterior}). In
particular, considering problem (\ref{P2}) with $f=u^{p}$ and $g=0,$ the works
\cite{fila2013large},\cite{fila2015existence},\cite{fila2016minimal} offer
comprehensive treatments of the half-space case $\Omega=\mathbb{R}_{+}^{n}$,
including existence and non-existence results, time and spatial decay
properties, and asymptotic behavior and structural characterizations of
positive solutions. In a related direction, Giga and Hamamuki
\cite{giga2018dynamic} investigated dynamic boundary conditions for singular
degenerate parabolic equations (see also \cite{Giga-DIE-2021}). Their work
develops a theoretical framework in the half-space setting and establishes
results on existence, uniqueness, and asymptotic behavior (w.r.t a parameter
in the boundary condition) of viscosity solutions for uniformly continuous
initial data, i.e., $\varphi\in UC(\overline{\mathbb{R}_{+}^{n}})$. The
contribution by Latorre and Segura de Le\'{o}n \cite{Latorre-Segura2018}
addressed an elliptic problem involving the 1-Laplacian operator with a
dynamic boundary condition in a bounded domain $\Omega$. Their analysis, based
on nonlinear semigroup theory, considered $L^{2}(\partial\Omega)$ boundary
data and established existence and uniqueness results in the functional class
$L_{loc}^{2}((0,\infty);L^{2}(\Omega))\cap L_{loc}^{\infty}((0,\infty
);BV(\Omega))$ with the boundary regularity $C([0,\infty);L^{2}(\partial
\Omega))\cap W_{loc}^{1,1}((0,\infty);L^{2}(\partial\Omega)).$ Considering
also a bounded domain $\Omega$ and suitable growth assumptions on
nonlinearities $f$ and $g$ (including dependence on time and $\nabla u$),
Arrieta, Quittner, and Rodr\'{\i}guez-Bernal \cite{Arrieta-DIE2001} obtained
well-posedness results for a parabolic version of (\ref{P2}) with the
perturbed operator $(-\Delta u+\omega u,\partial_{\nu}u+\omega u)$ for
$\omega>0$ and initial data in Lebesgue spaces $L_{q_{1}}(\Omega)\times
L_{q_{2}}(\partial\Omega)$, and even for finite measure data $\mathcal{M}%
(\Omega)\times\mathcal{M}(\partial\Omega).$ They also studied the elliptic
counterpart of this perturbed problem with $f=0$, obtaining well-posedness
results for boundary data in the Besov space $B_{q}^{1/q}(\partial\Omega)$,
$q\in(1,\infty)$, as well as in $\mathcal{M}(\partial\Omega)$. In
\cite{Binz-2024}, Binz and ter Elst investigated a class of second-order
elliptic operators $Au$ in divergence form in a bounded domain $\Omega$ with
$C^{1,\alpha}$-boundary, where the coefficients are only H\"{o}lder
continuous, and considered the problem $u_{t}+Au=0$ in $\Omega$ with the
dynamic boundary condition $(Tr(u))_{t}+Tr(Au)=0$ on $\partial\Omega$, and
initial data $u(x,0)=u_{0}$ in $\Omega.$ The boundary conditions are of
Wentzell type and take the form $Tr(Au)=\varphi_{1}\partial_{\nu}u+\varphi
_{2}Tr(u)$ on $\partial\Omega,$ where $\varphi_{1}$ and $\varphi_{2}$ are
bounded measurable functions satisfying suitable hypotheses. Within this
framework, they proved maximal regularity results for $A$ in $L^{q}%
(\Omega)\times L^{q}(\partial\Omega)$ with $q\in(1,\infty)$, and also showed
that $A$ generates a holomorphic $C_{0}$-semigroup of angle $\pi/2$ for
$q\in\lbrack1,\infty).$

In the present work, we investigate problem (\ref{P1}) from a new perspective,
distinct from previous approaches, by exploring it in a different framework
for problems with dynamics boundary conditions. Our focus is on analyzing the
global-in-time well-posedness (in the Hadamard sense) and qualitative
properties of solutions in a setting that accommodates self-similar solutions
and novel classes of rough initial data. To this end, we employ the framework
of Morrey spaces which were originally introduced to study the local behavior
of solutions to elliptic PDEs and extend classical Lebesgue spaces by
incorporating both local and global integrability conditions. In fact, they
are strictly larger than $L^{p}$ and weak-$L^{p}$ spaces, and then allowing
for a broader class of functions to be considered. This makes Morrey spaces
especially well-suited for studying problems in the presence of rough initial
or boundary data, as well as nondecaying data (i.e., data that do not decay at
infinity). Moreover, the structure of Morrey spaces is compatible with the
scaling properties of many PDEs, enabling the construction and analysis of
self-similar solutions.

Considering $p_{1}=2p_{2}-1$, we have the scaling map for (\ref{P1})
\begin{equation}
u(x,t)\mapsto u_{\lambda}(x,t)=\lambda^{\frac{2}{p_{1}-1}}u(\lambda x,\lambda
t)=\lambda^{\frac{1}{p_{2}-1}}u(\lambda x,\lambda t), \label{sc1}%
\end{equation}
which naturally induces a corresponding scaling for the initial-boundary data
$\varphi(x^{\prime})$
\begin{equation}
\varphi(x^{\prime})\mapsto\varphi_{\lambda}(x^{\prime})=\lambda^{\frac
{1}{p_{2}-1}}\varphi(\lambda x^{\prime}). \label{sc2}%
\end{equation}
Solutions invariant under (\ref{sc1}) are referred to as self-similar
solutions. These correspond to data $\varphi(x^{\prime})$ that are homogeneous
functions of degree $-1/(p_{2}-1).$

Problem (\ref{P1}) can be reformulated as an equivalent integral equation (see
\cite{Fila1997},\cite[Remark 1.1]{fila2013large} for details)%
\begin{equation}
u=I_{1}[\varphi]+I_{2}[u_{0}|u_{0}|^{p_{2}-1}]+I_{3}[u|u|^{p_{1}-1}%
]+I_{4}[u|u|^{p_{1}-1}], \label{int1}%
\end{equation}
where, for a function $h(x^{\prime},x_{n})$, we denote the boundary trace by
$h_{0}(x^{\prime})=(h)_{0}(x^{\prime})=h(x^{\prime},0)$, and the integral
operators are defined as follows:
\begin{align*}
&  I_{1}[\varphi](x^{\prime},x_{n},t)=\int_{\mathbb{R}^{n-1}}P(x^{\prime
}-y^{\prime},x_{n}+t)\varphi(y^{\prime})dy^{\prime},\\
&  I_{2}[u_{0}|u_{0}|^{p_{2}-1}](x^{\prime},x_{n},t)=\int_{0}^{t}%
\int_{\mathbb{R}^{n-1}}P(x^{\prime}-y^{\prime},x_{n}+t-s)u(y^{\prime
},0,s)|u(y^{\prime},0,s)|^{p_{2}-1}dy^{\prime}ds,\\
&  I_{3}[u|u|^{p_{1}-1}](x^{\prime},x_{n},t)=\int_{0}^{t}\int_{\mathbb{R}%
_{+}^{n}}P(x^{\prime}-y^{\prime},x_{n}+y_{n}+t-s)u(y,s)|u(y,s)|^{p_{1}%
-1}dyds,\\
&  I_{4}[u|u|^{p_{1}-1}](x^{\prime},x_{n},t)=\int_{\mathbb{R}_{+}^{n}%
}G(x,y)u(y,t)|u(y,t)|^{p_{1}-1}dy,
\end{align*}
with $P(x^{\prime},x_{n})$ and $G(x,y)$ standing for the Poisson kernel and
Green function for the Laplacian in $\mathbb{R}_{+}^{n}$, respectively, given
by
\begin{equation}
P(x^{\prime},x_{n})=c_{n}\frac{x_{n}}{(x_{n}^{2}+|x^{\prime}|^{2})^{\frac
{n}{2}}}\text{ and }G(x,y)=\frac{c_{n}}{2(n-2)}\left(  \frac{1}{|x-y|^{n-2}%
}-\frac{1}{|x-\tilde{y}|^{n-2}}\right)  , \label{Kernels-P-G}%
\end{equation}
where $c_{n}=\Gamma\left(  \frac{n}{2}\right)  \pi^{-n/2}$ and $\tilde
{y}=(y^{\prime},-y_{n})$. \ A solution of (\ref{int1}) is referred to as a
\textit{integral solution} of problem (\ref{P1}). In equation (\ref{int1}),
the four components can be interpreted as follows: $I_{1}$ represents the
boundary data term, $I_{2}$ the boundary nonlinearity, $I_{3}$ the interior
nonlinearity (evolutionary) component, and $I_{4}$ the interior nonlinearity
(instantaneous) term.

We develop a global-in-time well-posedness theory for (\ref{P1}) within the
framework of Morrey spaces, which enables the treatment of singular boundary
data and self-similar solutions (see Theorems \ref{theo} and
\ref{cor:consequences}). This setting seems to be novel in the context of
problems with dynamic boundary conditions and allows us to handle singular
boundary data exhibiting infinitely many poles---cases not covered by previous
results in the half-space. A central aspect of our analysis is the derivation
of boundary and interior estimates for the integral operators $I_{1}%
,I_{2},I_{3},I_{4}$ in Morrey spaces (see Sections \ref{Sec4.1} and
\ref{Sec4.2}), followed by a contraction argument (see Section \ref{Sec5.1}).
To analyze self-similarity, the indices of the function spaces are chosen so
that their norms are invariant under the scalings (\ref{sc1}) and (\ref{sc2}).
Also, we investigate qualitative properties of solutions such as symmetries
(e.g. invariance around the axis $\overrightarrow{Ox_{n}}$) and positivity,
under suitable assumptions on $\varphi$ (see Theorem \ref{cor:consequences}).
Notably, the smallness condition imposed on $\varphi(x^{\prime})$ is with
respect to the weaker norm of Morrey-spaces, which allows for initial data
with arbitrarily large $L^{p}$ and $H^{s}$-norms to be considered. Moreover,
we establish an asymptotic stability result, showing that small perturbations
of the initial data become negligible as time $t\rightarrow\infty$ (see
Theorem \ref{teo-asy}). As a consequence, we are able to construct a basin of
attraction around each self-similar solution, as well as identify a class of
solutions that are asymptotically self-similar (see Remark \ref{Rem-self}).

To be more precise, we analyze the integral formulation (\ref{int1}) in the
Banach space $\mathcal{X}$ consisting of all Bochner measurable functions
$u:(0,\infty)\rightarrow\mathcal{E}_{q_{0},q_{1},q_{2}},$ where
\begin{equation}
\mathcal{E}_{q_{0},q_{1},q_{2}}=(\mathcal{M}_{q_{0},\mu}(\mathbb{R}_{+}%
^{n})\cap\mathcal{M}_{q_{1},\mu}(\mathbb{R}_{+}^{n}))\times\mathcal{M}%
_{q_{2},\mu}(\partial\mathbb{R}_{+}^{n})\text{ with }q_{0}=\frac{(n-\mu
)(p_{1}-1)}{2} \label{space1}%
\end{equation}
and $p_{1}=2p_{2}-1$, such that%
\begin{equation}
\left\Vert u\right\Vert _{\mathcal{X}}=\sup_{t>0}\Vert u(\cdot,t)\Vert
_{\mathcal{M}_{q_{0},\mu}(\mathbb{R}_{+}^{n})}+\sup_{t>0}t^{\alpha}\Vert
u(\cdot,t)\Vert_{\mathcal{M}_{q_{1},\mu}(\mathbb{R}_{+}^{n})}+\sup
_{t>0}t^{\beta}\Vert u(\cdot,0,t)\Vert_{\mathcal{M}_{q_{2},\mu}(\partial
\mathbb{R}_{+}^{n})}<\infty. \label{norm1}%
\end{equation}
The supremum over $t>0$ in (\ref{norm1}) is taken in the essential sense. The
initial data $\varphi$ is assumed to belong to $\mathcal{M}_{\widetilde{q}%
_{0},\mu}(\mathbb{R}^{n-1})$ with $\widetilde{q}_{0}=\frac{(n-1-\mu)(p_{1}%
-1)}{2}<q_{2}$. We point out that the norm in the space $\mathcal{X}$ is a
time-weighted one of Kato type, which provides suitable control both in the
domain $\mathbb{R}_{+}^{n}$ and on the boundary $\partial\mathbb{R}_{+}^{n}$.
Note that the space $\mathcal{E}_{q_{0},q_{1},q_{2}}$ carries a $\mathcal{M}%
_{q,\mu}$-type information for the boundary trace $u|_{\partial\mathbb{R}%
_{+}^{n}}$, without requiring any positive regularity assumption on $u$. This
makes it particularly useful for handling singular boundary terms. On the
other hand, $L^{p}(\partial\mathbb{R}_{+}^{n})$ contains only trivial
homogeneous functions, which naturally motivates the consideration of larger
spaces that include such singular profiles. In fact, we have the strict
continuous inclusion
\[
L^{p}(\partial\mathbb{R}_{+}^{n})\subset\mathcal{M}_{q,\mu}(\partial
\mathbb{R}_{+}^{n}),
\]
for $1\leq q<p<n-1$ and $\frac{n-1}{p}=\frac{n-1-\mu}{q}$. Moreover,
\[
\left\Vert \,|x^{\prime}|^{-k}\right\Vert _{L^{p}(\partial\mathbb{R}_{+}^{n}%
)}=\infty\text{, for any \thinspace}k\in\mathbb{R}/\{0\}\text{ and }%
p\in\lbrack1,\infty],
\]
while $\,|x^{\prime}|^{-k}\in\mathcal{M}_{q,\mu}(\partial\mathbb{R}_{+}^{n}%
)$,$\ $as well as its translations in the $x^{i}$-directions, provided that
$k=\frac{n-1-\mu}{q}.$ Thus, by choosing a sequence $\{x^{i}\}_{i=1}%
^{l}\subset\partial\mathbb{R}_{+}^{n}$, the data $\varphi(x^{\prime})$ may
exhibit an infinite number of poles on the boundary.

In light of the aspects discussed above, we also mention the work by de
Almeida and Ferreira \cite{Ferreira} which studied the Navier-Stokes equations
in the half-space with Dirichlet boundary conditions involving time-dependent
boundary data in Morrey spaces, employing a potential-type analysis. Although
the boundary condition considered in that work is not of dynamic-type (as in
problems (\ref{P1}) and (\ref{P2})) and does not include nonlinear terms, the
time dependence of the boundary data in \cite{Ferreira} introduces features
that create connections between the boundary problems and motivate approaches
combining integral formulations and critical spaces. Finally, while various
definitions of Morrey spaces can be found in the literature, we adopt here the
notations for Morrey and block spaces used, for instance, in
\cite{Giga-CPDE-1989},\cite{Kato1992},\cite{Ferreira},\cite{Ferreira2}%
,\cite{FerreiraSantana2024}, which are consistent with that presented in
\cite{adams2015morrey}.

The paper is organized as follows. In Section \ref{Sec2}, we recall basic
definitions and properties of Morrey spaces, the Poisson kernel, and the
extension operator. Section \ref{Sec3} introduces certain critical
time-dependent functional spaces and presents the main results. In Section
\ref{Sec4}, we establish key estimates for the Poisson kernel and related
operators arising from the integral formulation (\ref{int1}). With these
estimates in hands, we prove our main result in Section \ref{Sec5}, which is
divided into three subsections: Subsection \ref{Sec5.1} (well-posedness),
Subsection \ref{Sec5.2} (self-similarity and symmetry), and Subsection
\ref{Sec5.3} (asymptotic stability).

\section{Preliminaries}

\label{Sec2}In this section, we fix the notation and recall some properties of
Morrey and block spaces that will be useful in the present work.

\subsection{Morrey and block spaces}

Let $x=(x^{\prime},x_{n})\in\mathbb{R}^{n-1}\times\mathbb{R}$ and consider
$\Omega=\mathbb{R}^{n-1}$ or \ $\Omega=\mathbb{R}_{+}^{n}:=\mathbb{R}%
^{n-1}\times\mathbb{R}_{+}.$ We write $f(x)$ or $f(x^{\prime},x_{n})$ for
$x=(x^{\prime},x_{n})\in\mathbb{R}_{+}^{n}$ and $f(x^{\prime})$ for
$x^{\prime}\in\mathbb{R}^{n-1}$.

For $1\leq q\leq\infty$ and $0\leq\mu<\dim(\Omega)$, the \textit{Morrey space}
$\mathcal{M}_{q,\mu}=\mathcal{M}_{q,\mu}(\Omega)$ consists of all measurable
functions $f$ such that
\[
\Vert f\Vert_{\mathcal{M}_{q,\mu}(\Omega)}:=\sup_{x_{0}\in\Omega,\,r>0}%
r^{-\mu/q}\Vert f\Vert_{L^{q}(\Omega_{r}(x_{0}))}<\infty,
\]
where $\Omega_{r}(x_{0})=\Omega\cap B(x_{0},r)$ and $B(x_{0},r)$ denotes the
open ball centered at $x_{0}$ with radius $r$ in $\mathbb{R}^{n}$ (or in
$\mathbb{R}^{n-1}$ if $\Omega=\mathbb{R}^{n-1}$), that is, in $\mathbb{R}%
^{\dim(\Omega)}$. The pair $(\mathcal{M}_{q,\mu},\Vert\cdot\Vert
_{\mathcal{M}_{q,\mu}(\Omega)})$ is a Banach space. Also, we point out that
$\mathcal{M}_{\infty,\mu}(\Omega)$ should be meant as the space $L^{\infty
}(\Omega)$.

Morrey spaces are not reflexive, but they admit a useful characterization of
their predual via block spaces. This property is particularly helpful for
deriving estimates and analyzing convergence through duality.

For that, first we recall the definition of a $(q,\lambda)$-block. Let
$q^{\prime}$ be the conjugate exponent of $q$. For $1<q<\infty$ and
$0\leq\lambda<\dim(\Omega)$, a function $A\in L_{loc}^{q}(\Omega)$ is a
$(q,\lambda)$-block if there exist $x\in\Omega$ and $R>0$ such that
\[
\mathrm{supp}(A)\subset\Omega_{R}(x_{0})\text{ and }R^{\lambda/q^{\prime}%
}\Vert A\Vert_{L^{q}(\Omega_{R}(x_{0}))}\leq1.
\]
The block space $\mathcal{H}_{q,\lambda}(\Omega)$ consists of all functions
$g$ that can be written as
\[
g=\sum_{i=1}^{\infty}\lambda_{i}A_{i},
\]
where each $A_{i}$ is a $(q,\lambda)$-block and the coefficient sequence
$(\lambda_{i})_{i\in\mathbb{N}}$ belongs to $\ell^{1}$. The norm in
$\mathcal{H}_{q,\lambda}(\Omega)$ is given by
\begin{equation}
\Vert g\Vert_{\mathcal{H}_{q,\lambda}(\Omega)}:=\inf\left\{  \sum
_{i=1}^{\infty}|\lambda_{i}|:g=\sum_{i=1}^{\infty}\lambda_{i}A_{i}\text{ with
}A_{i}\text{ a }(q,\lambda)\text{-block}\right\}  . \label{def_blocks_space}%
\end{equation}
The space $\mathcal{H}_{q,\lambda}(\Omega)$ endowed with $\Vert\cdot
\Vert_{\mathcal{H}_{q,\lambda}(\Omega)}$ is a Banach space. Moreover, we have
the duality property (see \cite[Theorem 347]{adams2015morrey}%
,\cite{almeida2018approximation},\cite{Ferreira2})
\begin{equation}
\mathcal{H}_{q^{\prime},\lambda}(\Omega)^{\ast}=\mathcal{M}_{q,\lambda}%
(\Omega). \label{dualidad_M_H}%
\end{equation}

H\"{o}lder-type inequality works well in the framework of Morrey spaces (see,
e.g., \cite[p.~132]{Kato1992}). This the subject of the next lemma.

\begin{lemma}
Let $1\leq p_{i}\leq\infty$, $0\leq\mu_{i}<n$ for $i=1,2,3$, and suppose
\[
\frac{1}{p_{3}}=\frac{1}{p_{1}}+\frac{1}{p_{2}},\quad\frac{n-\mu_{3}}{p_{3}%
}=\frac{n-\mu_{1}}{p_{1}}+\frac{n-\mu_{2}}{p_{2}}.
\]
Then
\begin{equation}
\Vert fg\Vert_{\mathcal{M}_{p_{3},\mu_{3}}(\Omega)}\leq\Vert f\Vert
_{\mathcal{M}_{p_{1},\mu_{1}}(\Omega)}\Vert g\Vert_{\mathcal{M}_{p_{2},\mu
_{2}}(\Omega)}. \label{Holder1}%
\end{equation}

\end{lemma}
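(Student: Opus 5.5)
The plan is to apply the classical Hölder inequality on each localized set $\Omega_{r}(x_{0})$ and then split the power of $r$ according to the scaling relation. Fix $x_{0}\in\Omega$ and $r>0$. Since $\frac{1}{p_{3}}=\frac{1}{p_{1}}+\frac{1}{p_{2}}$, the usual Hölder inequality on $\Omega_{r}(x_{0})$ gives
\[
\Vert fg\Vert_{L^{p_{3}}(\Omega_{r}(x_{0}))}\leq\Vert f\Vert_{L^{p_{1}}(\Omega_{r}(x_{0}))}\Vert g\Vert_{L^{p_{2}}(\Omega_{r}(x_{0}))}.
\]

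Next we multiply by $r^{-\mu_{3}/p_{3}}$ and distribute the power. Subtracting $n$ times the first relation from the second yields
\[
\frac{\mu_{3}}{p_{3}}=\frac{\mu_{1}}{p_{1}}+\frac{\mu_{2}}{p_{2}},
\]
so that $r^{-\mu_{3}/p_{3}}=r^{-\mu_{1}/p_{1}}\,r^{-\mu_{2}/p_{2}}$. Hence
\[
r^{-\mu_{3}/p_{3}}\Vert fg\Vert_{L^{p_{3}}(\Omega_{r}(x_{0}))}\leq\Big(r^{-\mu_{1}/p_{1}}\Vert f\Vert_{L^{p_{1}}(\Omega_{r}(x_{0}))}\Big)\Big(r^{-\mu_{2}/p_{2}}\Vert g\Vert_{L^{p_{2}}(\Omega_{r}(x_{0}))}\Big).
\]
Each factor is bounded by the corresponding Morrey norm. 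Taking the supremum over $x_{0}$ and $r$ gives (\ref{Holder1}). Here $n$ should be read as $\dim(\Omega)$; the relations only use that exponent consistently, so the argument is the same for $\Omega=\mathbb{R}^{n-1}$ and $\Omega=\mathbb{R}_{+}^{n}$.

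The only point needing care is the endpoint $p_{i}=\infty$. By the paper's convention, $\mathcal{M}_{\infty,\mu}$ means $L^{\infty}$, and the term $\mu_{i}/p_{i}$ is read as $0$. For example, if $p_{2}=\infty$, the relations force $p_{3}=p_{1}$ and $\mu_{3}=\mu_{1}$, and the estimate reduces to
\[
\Vert fg\Vert_{L^{p_{1}}(\Omega_{r}(x_{0}))}\leq\Vert g\Vert_{L^{\infty}}\Vert f\Vert_{L^{p_{1}}(\Omega_{r}(x_{0}))}.
\]
Multiplying by $r^{-\mu_{1}/p_{1}}$ and taking the supremum gives the claim in this case. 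The case $p_{1}=\infty$ is symmetric. If both $p_{1}=p_{2}=\infty$, then $p_{3}=\infty$ and the statement is the trivial bound $\Vert fg\Vert_{L^{\infty}}\leq\Vert f\Vert_{L^{\infty}}\Vert g\Vert_{L^{\infty}}$.
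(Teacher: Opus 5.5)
Your argument is correct. Hölder's inequality on each $\Omega_{r}(x_{0})$, combined with the identity $\mu_{3}/p_{3}=\mu_{1}/p_{1}+\mu_{2}/p_{2}$ (obtained by subtracting $n$ times the first relation from the second), gives the bound, and your handling of the $p_{i}=\infty$ endpoints matches the paper's convention $\mathcal{M}_{\infty,\mu}=L^{\infty}$. The paper gives no proof of this lemma and only cites Kato; your proof is the standard argument behind that reference.
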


In the sequel we recall a trace type theorem concerning Riesz operators in
Morrey spaces found in \cite[Theorem 1.1]{Liu_xiao}. However, we limit this
theorem to two specific cases, each corresponding to particular fixed
hypotheses. By focusing on these cases, we can avoid verifying all the general
conditions of \cite[Theorem 1.1]{Liu_xiao}. This result is also presented in
the introductory section of \cite{Liu}.

\begin{lemma}
Let $0<\gamma<n$, $1<p_{1},p_{2}<\infty$ and $0\leq\mu<n.$

\begin{enumerate}
\item In addition, suppose that $\mu<n-1$, $\gamma=\frac{n-\mu}{p_{1}}%
-\frac{n-\mu-1}{p_{2}}$, and $n-\gamma p_{1}<n-1$. Then, there exists $C>0$
such that
\begin{equation}
\left\Vert \left(  \frac{1}{\left\vert x\right\vert ^{n-\gamma}}\ast f\right)
(x^{\prime},0)\right\Vert _{\mathcal{M}_{p_{2},\mu}(\mathbb{R}^{n-1})}\leq
C\left\Vert f\right\Vert _{\mathcal{M}_{p_{1},\mu}(\mathbb{R}^{n})},
\label{xiao_liu_n_1}%
\end{equation}
for all $f\in\mathcal{M}_{p_{1},\mu}(\mathbb{R}^{n}).$

\item If $\gamma=\frac{n-\mu}{p_{1}}-\frac{n-\mu}{p_{2}}$, there exists $C>0$
such that
\begin{equation}
\left\Vert \frac{1}{\left\vert x\right\vert ^{n-\gamma}}\ast f\right\Vert
_{\mathcal{M}_{p_{2},\mu}(\mathbb{R}^{n})}\leq C\left\Vert f\right\Vert
_{\mathcal{M}_{p_{1},\mu}(\mathbb{R}^{n})}, \label{xiao_liu_n}%
\end{equation}
for all $f\in\mathcal{M}_{p_{1},\mu}(\mathbb{R}^{n}).$
\end{enumerate}
\end{lemma}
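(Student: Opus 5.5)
Both parts come from the Hedberg--Adams method for Riesz potentials $I_\gamma f=|x|^{\gamma-n}\ast f$ on Morrey spaces. Part 2 is Adams' theorem. Part 1 is its trace version, obtained by restricting the same argument to the hyperplane $\{x_n=0\}$. I would prove part 2 first and then modify it for part 1. We may assume $f\geq0$.

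\textbf{Step 1: pointwise splitting.} Fix $x\in\mathbb{R}^{n}$ and $r>0$, and split $I_\gamma f(x)$ into the integrals over $|x-y|<r$ and $|x-y|\geq r$.

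For the far part, decompose into dyadic shells $2^{k}r\leq|x-y|<2^{k+1}r$. On each shell apply H\"older's inequality and the Morrey condition $\|f\|_{L^{p_1}(B(x,\rho))}\leq\rho^{\mu/p_1}\|f\|_{\mathcal{M}_{p_1,\mu}}$. Summing the geometric series gives
\[
\int_{|x-y|\geq r}\frac{f(y)}{|x-y|^{n-\gamma}}\,dy\lesssim r^{\gamma-(n-\mu)/p_1}\|f\|_{\mathcal{M}_{p_1,\mu}}.
\]
The series converges because $\gamma<(n-\mu)/p_1$, which holds in both cases by the relation between $\gamma$, $p_1$ and $p_2$.

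For the near part, the same dyadic decomposition gives
\[
\int_{|x-y|<r}\frac{f(y)}{|x-y|^{n-\gamma}}\,dy\lesssim r^{\gamma-\varepsilon}M_\varepsilon f(x),\qquad 0\leq\varepsilon<\gamma,
\]
where $M_\varepsilon f(x)=\sup_{\rho>0}\rho^{\varepsilon-n}\int_{B(x,\rho)}f$ is the fractional maximal function. In part 2 I take $\varepsilon=0$, so $M_0=M$ is the Hardy--Littlewood maximal function.

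Choosing $r$ to balance the two terms yields
\[
I_\gamma f(x)\lesssim (M_\varepsilon f(x))^{\theta}\,\|f\|_{\mathcal{M}_{p_1,\mu}}^{1-\theta},\qquad \theta=\frac{(n-\mu)/p_1-\gamma}{(n-\mu)/p_1-\varepsilon}.
\]

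\textbf{Step 2: part 2.} Take $\varepsilon=0$. Then $\theta=p_1/p_2$ exactly when $\gamma=\frac{n-\mu}{p_1}-\frac{n-\mu}{p_2}$. Consequently
\[
\|I_\gamma f\|_{\mathcal{M}_{p_2,\mu}}\lesssim\|Mf\|_{\mathcal{M}_{p_1,\mu}}^{p_1/p_2}\|f\|^{1-p_1/p_2}.
\]
It remains to use the boundedness of $M$ on $\mathcal{M}_{p_1,\mu}$ for $p_1>1$ (Chiarenza--Frasca). To prove that, fix a ball $B(x_0,R)$ and split $f=f\chi_{B(x_0,2R)}+f\chi_{B(x_0,2R)^c}$. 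The first piece is handled by the $L^{p_1}$-boundedness of $M$. For the second piece, on $B(x_0,R)$ its maximal function is bounded by $\sup_{\rho\geq R}$ of averages, which the Morrey norm controls by $R^{-(n-\mu)/p_1}\|f\|$.

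\textbf{Step 3: part 1, the main obstacle.} Now $x=(x',0)$ and the target is an $L^{p_2}$ norm over $(n-1)$-dimensional balls $B'(x_0',R)$. Step 1 still applies pointwise. The difficulty is that the Hardy--Littlewood maximal function of $f\in L^{p_1}(\mathbb{R}^n)$ has no trace on the hyperplane. The fractional maximal function $M_\varepsilon f$ does have one, provided $\varepsilon p_1>1$, by Adams' trace inequality for the measure $d\nu=\mathcal{H}^{n-1}\llcorner\{x_n=0\}$, which satisfies $\nu(B(x,\rho))\lesssim\rho^{n-1}$. I would choose $\varepsilon\in(1/p_1,\gamma)$; this interval is nonempty because $\gamma p_1>1$, which is the hypothesis $n-\gamma p_1<n-1$.

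The key sub-estimate is then
\[
\|M_\varepsilon f(\cdot,0)\|_{\mathcal{M}_{s,\mu}(\mathbb{R}^{n-1})}\lesssim\|f\|_{\mathcal{M}_{p_1,\mu}(\mathbb{R}^n)},\qquad \frac{n-1-\mu}{s}=\frac{n-\mu}{p_1}-\varepsilon.
\]
I would prove it by the same localization as in Step 2. The local piece is handled by Adams' $L^{p_1}\to L^{s}(\nu)$ trace inequality for $M_\varepsilon$, using the $(n-1)$-growth of $\nu$. The tail piece is handled by the Morrey decay as before.

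Combining this with the pointwise bound of Step 1, a direct computation shows that the exponents close, i.e. $s\theta=p_2$, precisely when $\gamma=\frac{n-\mu}{p_1}-\frac{n-\mu-1}{p_2}$. This gives \eqref{xiao_liu_n_1}. I expect the verification of this fractional maximal trace estimate in Morrey norms, together with the bookkeeping of exponents and the constraint $\mu<n-1$, to be the hardest part of the argument.
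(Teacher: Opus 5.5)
Steps 1 and 2 are fine: part 2 is Adams' theorem via Hedberg's inequality plus Chiarenza--Frasca, and your bookkeeping $s\theta=p_2$ in Step 3 is correct. The gap is in the \emph{local piece} of your key sub-estimate in Step 3. For a measure $\nu$ with $(n-1)$-growth, Adams' trace inequality gives $M_\varepsilon: L^{p_1}(\mathbb{R}^n)\to L^{q}(\nu)$ only for the scaling exponent $\frac{n-1}{q}=\frac{n}{p_1}-\varepsilon$, not for your $s$. A direct computation gives
\[
\frac1s-\frac1q=\frac{\mu\,(1/p_1-\varepsilon)}{(n-1)(n-1-\mu)},
\]
so for $\mu>0$ and $\varepsilon>1/p_1$ you have $s>q$.

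Your localization on $B'=B'(x_0',R)$ keeps only the bound $\|f\chi_{B(x_0,2R)}\|_{L^{p_1}}\le(2R)^{\mu/p_1}\|f\|_{\mathcal{M}_{p_1,\mu}}$. That yields an $L^q(B')$ estimate, and H\"older on $B'$ cannot raise $L^q$ to $L^s$. In fact no such local bound exists. Take $g=\lambda^{n/p_1}\phi(\lambda(\cdot-x_0))$ supported in $B(x_0,2R)$: $\|g\|_{L^{p_1}}$ stays fixed, while $\|M_\varepsilon g(\cdot,0)\|_{L^s(B')}\gtrsim\lambda^{(n-1)(1/q-1/s)}\to\infty$. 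The gain beyond the Lebesgue exponent comes from small-scale Morrey information, which the Chiarenza--Frasca splitting throws away. So your sub-estimate, which is true but is itself a gain estimate of the same type as part 1, is not established by this argument; it only closes when $\mu=0$.

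The repair is to take $\varepsilon=1/p_1$ exactly. This is admissible in your near-part bound, because $1/p_1<\gamma$ is precisely the hypothesis $n-\gamma p_1<n-1$. Then $\theta=p_1/p_2$ and $s=q=p_1$, so no gain is needed. Contrary to what you assumed, $M_\varepsilon$ (unlike $I_\varepsilon$) does have an $L^{p_1}$ trace at the endpoint $\varepsilon p_1=1$, by an elementary argument:

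For $x=(x',0)$, H\"older in $y_n$ over $(-\rho,\rho)$ gives $\rho^{1/p_1-n}\int_{B(x,\rho)}|f|\le 2^{1-1/p_1}\rho^{1-n}\int_{B'(x',\rho)}F$, where $F(y')=\|f(y',\cdot)\|_{L^{p_1}(\mathbb{R})}$. Hence $M_{1/p_1}f(x',0)\lesssim M'F(x')$, with $M'$ the Hardy--Littlewood maximal operator on $\mathbb{R}^{n-1}$, and $\|F\|_{L^{p_1}(\mathbb{R}^{n-1})}=\|f\|_{L^{p_1}(\mathbb{R}^n)}$.

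Apply this to $f\chi_{B(x_0,2R)}$. Bound the tail, up to constants, by $\sup_{\rho>R}\rho^{-(n-1-\mu)/p_1}\|f\|_{\mathcal{M}_{p_1,\mu}}\le R^{-(n-1-\mu)/p_1}\|f\|_{\mathcal{M}_{p_1,\mu}}$; this is where $\mu<n-1$ enters. Together these give $\|M_{1/p_1}f(\cdot,0)\|_{\mathcal{M}_{p_1,\mu}(\mathbb{R}^{n-1})}\lesssim\|f\|_{\mathcal{M}_{p_1,\mu}(\mathbb{R}^n)}$, and combined with your Step 1 this proves part 1.

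Once repaired, your argument is a self-contained elementary proof. The paper gives no proof of this lemma and simply quotes Liu--Xiao's Theorem 1.1.
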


\subsection{Poisson kernel, Green function, and the extension operator}

As noted in the Introduction (see (\ref{Kernels-P-G})), the Poisson kernel
$P(x^{\prime},x_{n})$ in $\mathbb{R}_{+}^{n}$ is defined as
\begin{equation}
P(x^{\prime},x_{n})=c_{n}\frac{x_{n}}{(x_{n}^{2}+|x^{\prime}|^{2})^{\frac
{n}{2}}},\text{ for }x^{\prime}\in\mathbb{R}^{n-1}\text{ and }x_{n}>0\text{,}
\label{P-expression-1}%
\end{equation}
where $c_{n}=\Gamma\left(  \frac{n}{2}\right)  \pi^{-n/2}$ and $n\geq3.$ It
satisfies the normalization property of the integral w.r.t. $x^{\prime}%
$-variable (see \cite[Proposition 5, p.61-62]{Stein})
\begin{equation}
\int_{\mathbb{R}^{n-1}}P(x^{\prime},x_{n})dx^{\prime}=1,\quad\forall x_{n}>0.
\label{P_definicion}%
\end{equation}
To simplify notation, for $t>0$ we define the symmetric extension
\begin{equation}
\bar{P}(x^{\prime},x_{n}+t):=%
\begin{cases}
P(x^{\prime},t+x_{n}), & \text{if }x_{n}>0,\\
P(x^{\prime},t-x_{n}), & \text{if }x_{n}\leq0,
\end{cases}
\label{P_extendido}%
\end{equation}
which allows us to consider the convolution%
\begin{equation}
\bar{P}_{x_{n}+t}\ast f\text{, for all }x_{n}\in\mathbb{R}.
\label{aux-Poisson-ext-conv-1}%
\end{equation}

Let $n\geq3$, $q\geq1$, and $\theta\in\lbrack0,1)$. Then the Poisson kernel
$P$ satisfies the following estimates for $z\in\mathbb{R}^{n-1}$ and $t>0$:
\begin{align}
\left\Vert P(z,x_{n}+t)\right\Vert _{L^{q}((0,\infty),dx_{n})}  &  \leq
\tilde{c}_{n}\frac{1}{|t|^{\theta(n-1-\frac{1}{q})}}\frac{1}{|z|^{(1-\theta
)(n-1-\frac{1}{q})}},\label{eq:P_kernel_est1}\\
|P(z,x_{n}+y_{n}+t)|  &  \leq c_{n}\frac{1}{t^{\theta(n-1)}}\frac{1}%
{|(z,x_{n}-y_{n})|^{(1-\theta)(n-1)}}, \label{eq:P_kernel_est2}%
\end{align}
for all $x_{n},y_{n}>0$.

For that, we first note the elementary inequality
\begin{equation}
\frac{1}{(a^{2}+b^{2})^{q}}\leq\frac{1}{(a^{2\theta}b^{2(1-\theta)})^{q}%
},\quad\label{eq:interpolation_ineq}%
\end{equation}
for all $a,b\in\mathbb{R},\ a\neq0$, $\theta\in\lbrack0,1),$ which follows
from Young's inequality in the form:
\[
a^{2\theta}b^{2(1-\theta)}\leq\theta a^{2}+(1-\theta)b^{2}\leq a^{2}+b^{2}.
\]
\medskip\noindent For estimate (\ref{eq:P_kernel_est1}), using expression
(\ref{P-expression-1}) and the change of variables $x_{n}=\sqrt{t^{2}+|z|^{2}%
}\tan(r)$, we obtain
\begin{align*}
\left\Vert P(z,x_{n}+t)\right\Vert _{L^{q}((0,\infty),dx_{n})}  &
=c_{n}\left\Vert \frac{x_{n}+t}{((x_{n}+t)^{2}+|z|^{2})^{n/2}}\right\Vert
_{L^{q}((0,\infty),dx_{n})}\\
&  \leq c_{n}\left[  \int_{0}^{\infty}\frac{1}{((x_{n}^{2}+t^{2}%
+|z|^{2})^{(n-1)/2})^{q}}\,dx_{n}\right]  ^{1/q}\\
&  =\tilde{c}_{n}\frac{1}{(t^{2}+|z|^{2})^{\frac{n-1}{2}-\frac{1}{2q}}},
\end{align*}
where $\tilde{c}_{n}=c_{n}\left[  \int_{0}^{\pi/2}\cos(r)^{(n-1)q-2}%
\,dr\right]  ^{1/q}$, and the integral converges for $n\geq3$ and $q\geq1$.
Then, applying \eqref{eq:interpolation_ineq}, we deduce the desired estimate.
\medskip\noindent Next, we note the pointwise estimate
\begin{align*}
|P(z,x_{n}+y_{n}+t)|  &  =c_{n}\left\vert \frac{x_{n}+t}{((x_{n}+y_{n}%
+t)^{2}+|z|^{2})^{n/2}}\right\vert \\
&  \leq c_{n}\frac{1}{(t^{2}+(x_{n}+y_{n})^{2}+|z|^{2})^{(n-1)/2}}\\
&  \leq c_{n}\frac{1}{(t^{2}+|(z,x_{n}-y_{n})|^{2})^{(n-1)/2}},
\end{align*}
and then apply (\ref{eq:interpolation_ineq}) in order to obtain
(\ref{eq:P_kernel_est2}).

\medskip Recalling now the second kernel in (\ref{Kernels-P-G}), the Green
function $G$ of the Laplacian operator in $\mathbb{R}_{+}^{n}$ is given by
\[
G(x,y)=\frac{c_{n}}{2(n-2)}\left(  \frac{1}{|x-y|^{n-2}}-\frac{1}{|x-\tilde
{y}|^{n-2}}\right)  ,\quad\tilde{y}=(y^{\prime},-y_{n}),
\]
for $x,y\in\mathbb{R}_{+}^{n}$. Since $|x-\tilde{y}|\geq|x-y|$ for all
$x_{n},y_{n}>0$, it follows that
\begin{equation}
|G(x,y)|\leq\bar{c}_{n}\frac{1}{|x-y|^{n-2}},\text{ where }\bar{c}_{n}%
=\frac{c_{n}}{2(n-2)}. \label{eq:Green_est}%
\end{equation}
\medskip

In what follows, we present some definitions and properties of the Stein
extension $\tilde{u}$ in $\mathbb{R}^{n}$ corresponding to a function $u$
originally defined in $\mathbb{R}_{+}^{n}$. This extension is particularly
useful when applying tools or theorems that require working in the whole space
$\mathbb{R}^{n}$, such as Fourier analysis or elliptic PDE theory, while still
starting with data defined only on a half-space or domain with boundary.

According to \cite[Lemma~1, p.~182]{Stein}, there exists a continuous function
$\psi:[1,\infty)\rightarrow\mathbb{R}$ satisfying the following properties:
$\psi(s)=\mathcal{O}(s^{-N})$ as $s\rightarrow\infty$, for every
$N\in\mathbb{N}$, and
\[
\int_{1}^{\infty}\psi(s)\,ds=1\text{ and }\int_{1}^{\infty}s^{k}%
\psi(s)\,ds=0,\quad\text{for all }k\in\mathbb{N}.
\]

For $u\in\mathcal{M}_{q,\mu}(\mathbb{R}_{+}^{n})$, the extension $\tilde{u}$
of $u$, defined for almost every $\ x\in\mathbb{R}^{n}$, is given by the
linear operator
\begin{equation}
\tilde{u}(x^{\prime},x_{n})=%
\begin{cases}
u(x^{\prime},x_{n}), & \text{if }x_{n}>0,\\
\int_{1}^{\infty}u(x^{\prime},(1-2s)x_{n})\psi(s)\,ds, & \text{if }x_{n}\leq0,
\end{cases}
\label{extension1}%
\end{equation}
provided the integrals in (\ref{extension1}) converge.

Before closing this section, we recall that the extension (\ref{extension1})
preserves $\mathcal{M}_{q,\mu}$-regularity (see \cite[Lemma~3.1]{Ferreira}).
Specifically, let $1\leq q<\infty$ and $0\leq\mu<n.$ Then, we have the
estimate
\begin{equation}
\Vert\tilde{u}\Vert_{\mathcal{M}_{q,\mu}(\mathbb{R}^{n})}\leq C\Vert
u\Vert_{\mathcal{M}_{q,\mu}(\mathbb{R}_{+}^{n})}, \label{cont-ext1}%
\end{equation}
for all $u\in\mathcal{M}_{q,\mu}(\mathbb{R}_{+}^{n}).$

\section{Functional setting and main results}

\label{Sec3}Before presenting our results, we introduce suitable
time-dependent functional spaces in which we seek solutions to equation
(\ref{int1}), with a focus on critical spaces, i.e., those invariant under the
scaling transformation (\ref{sc1}). This invariance property will enable us to
establish the existence of self-similar solutions, that is, solutions which
themselves remain invariant under (\ref{sc1}).

Let $p_{1},p_{2}\in(1,\infty)$, $\mu\in\lbrack0,n-2)$ and consider
\begin{equation}
q_{0}=\frac{(n-\mu)(p_{1}-1)}{2}>1,\text{ }q_{1},q_{2}\in(1,\infty)\text{,
}\alpha=\frac{2}{p_{1}-1}-\frac{n-\mu}{q_{1}}\text{, and }\beta=\frac{1}%
{p_{2}-1}-\frac{n-1-\mu}{q_{2}},\text{ } \label{Cond-H0}%
\end{equation}
where the parameters satisfy the conditions
\begin{equation}
\text{ }\frac{n-\mu}{n-\mu-2}<p_{1}<q_{1},\text{ }p_{2}<q_{2},\text{ and }%
\begin{cases}
0<\alpha<\beta p_{2}<1,\\
0<\beta<\alpha p_{1}<1.
\end{cases}
\label{Cond-H1}%
\end{equation}

As outlined in the Introduction (Section \ref{Sec1}), taking the above
considerations into account, we work within the Banach $\mathcal{X}$
consisting of all Bochner-measurable functions $u:(0,\infty)\rightarrow
\mathcal{E}_{q_{0},q_{1},q_{2}}$, where the space $\mathcal{E}_{q_{0}%
,q_{1},q_{2}}$ is defined in (\ref{space1}) and the norm $\left\Vert
\cdot\right\Vert _{\mathcal{X}}$ is given in (\ref{norm1}). The boundary data
$\varphi$ is assumed to belong to the space $\mathcal{M}_{\widetilde{q}%
_{0},\mu}(\mathbb{R}^{n-1})$, with $\widetilde{q}_{0}=(n-1-\mu)(p_{1}-1)/2$.
In view of (\ref{Cond-H0}) and this choice of $\widetilde{q}_{0}$, the norms
$\Vert\cdot\Vert_{\mathcal{X}}$ and $\left\Vert \cdot\right\Vert
_{\mathcal{M}_{\widetilde{q}_{0},\mu}}$ are invariant under (\ref{sc1}) and
(\ref{sc2}), respectively.

We are now in a position to state our main results.

\begin{theorem}
[Well-posedness]\label{theo} Let $n\geq3$, $p_{1},p_{2}\in(1,\infty),$ $\mu
\in\lbrack0,n-2)$, $\widetilde{q}_{0}=(n-1-\mu)(p_{1}-1)/2>1,$ and let
$q_{0},q_{1},q_{2},\alpha,\beta$ as in (\ref{Cond-H0}). Assume conditions
(\ref{Cond-H1}) and suppose that $p_{1}=2p_{2}-1.$ Then, there exists
$\varepsilon>0$ and $\delta=\delta(\varepsilon)$ ($\delta=C\varepsilon$) such
that problem (\ref{P1}) has a unique integral solution $u\in\mathcal{X}$
satisfying $\Vert u\Vert_{\mathcal{X}}\leq2\varepsilon,$ provided that
$\left\Vert \varphi\right\Vert _{\mathcal{M}_{\widetilde{q}_{0},\mu
}(\mathbb{R}^{n-1})}\leq\delta.$ The data-solution map $[\varphi]\rightarrow
u$ is locally Lipschitz continuous. Moreover $u(x^{\prime},0,t)\rightharpoonup
\varphi$ in the weak-$\ast$ topology of $\mathcal{M}_{\widetilde{q}_{0},\mu
}(\mathbb{R}^{n-1})$ as $t\rightarrow0^{+}$.
\end{theorem}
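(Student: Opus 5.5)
We will prove the theorem by a fixed-point argument in $\mathcal{X}$. Write (\ref{int1}) as $u=I_{1}[\varphi]+B(u)$ with
\[
B(u)=I_{2}[u_{0}|u_{0}|^{p_{2}-1}]+I_{3}[u|u|^{p_{1}-1}]+I_{4}[u|u|^{p_{1}-1}].
\]
We then apply the standard lemma for equations $u=y+B(u)$, where $B$ satisfies $\Vert B(u)-B(v)\Vert_{\mathcal{X}}\leq K\Vert u-v\Vert_{\mathcal{X}}(\Vert u\Vert_{\mathcal{X}}^{p_{1}-1}+\Vert v\Vert_{\mathcal{X}}^{p_{1}-1}+\Vert u\Vert_{\mathcal{X}}^{p_{2}-1}+\Vert v\Vert_{\mathcal{X}}^{p_{2}-1})$. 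If $\Vert y\Vert_{\mathcal{X}}\leq\varepsilon$ and $\varepsilon$ is small enough that $2K((2\varepsilon)^{p_{1}-1}+(2\varepsilon)^{p_{2}-1})<1/2$, then there is a unique fixed point in the closed ball of radius $2\varepsilon$. The same contraction estimate gives $\Vert u-v\Vert_{\mathcal{X}}\leq C\Vert I_{1}[\varphi-\psi]\Vert_{\mathcal{X}}$, which yields the Lipschitz dependence on the data.

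\textbf{Step 1 (linear estimate).} We show $\Vert I_{1}[\varphi]\Vert_{\mathcal{X}}\leq C\Vert\varphi\Vert_{\mathcal{M}_{\widetilde{q}_{0},\mu}(\mathbb{R}^{n-1})}$, which fixes $\delta=\varepsilon/C$. For the boundary component, $I_{1}[\varphi](\cdot,0,t)=P_{t}\ast\varphi$ in $\mathbb{R}^{n-1}$. We use the pointwise bound $P_{t}\lesssim t^{-\theta(n-1)}|z|^{-(1-\theta)(n-1)}$ from (\ref{eq:P_kernel_est2}), together with the Riesz-potential bound (\ref{xiao_liu_n}) applied in dimension $n-1$. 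Choosing $\theta$ so that the Riesz exponent matches $\widetilde{q}_{0}\to q_{2}$ produces exactly $t^{-\beta}$; the identity $\beta=\frac{1}{p_{2}-1}-\frac{n-1-\mu}{q_{2}}$ together with $\widetilde{q}_{0}=\frac{n-1-\mu}{p_{2}-1}$ confirms the exponents. For the interior norms we take the $L^{q}(dx_{n})$ norm using (\ref{eq:P_kernel_est1}), then apply the same Riesz estimate in $x'$. Scaling then forces the exponent $t^{-\alpha}$ in $\mathcal{M}_{q_{1},\mu}$ and $t^{0}$ in $\mathcal{M}_{q_{0},\mu}$, since $\frac{n-\mu}{q_{0}}=\frac{n-1-\mu}{\widetilde{q}_{0}}$. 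The Morrey norm on $\mathbb{R}_{+}^{n}$ is controlled through a Minkowski/Hölder argument on slabs.

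\textbf{Step 2 (nonlinear estimates).} By (\ref{Holder1}), $\Vert u_{0}|u_{0}|^{p_{2}-1}\Vert_{\mathcal{M}_{q_{2}/p_{2},\mu}}\leq\Vert u_{0}\Vert_{\mathcal{M}_{q_{2},\mu}}^{p_{2}}\lesssim s^{-\beta p_{2}}\Vert u\Vert^{p_{2}}_{\mathcal{X}}$. Similarly $u|u|^{p_{1}-1}$ lies in $\mathcal{M}_{q_{1}/p_{1},\mu}$ with bound $s^{-\alpha p_{1}}$ (mixed norms with $q_{0}$ can also be used). We then proceed term by term.
\begin{itemize}
\item For $I_{2}$, we repeat Step 1 at time $t-s$ and integrate. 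The time integrals are Beta functions $\int_{0}^{t}(t-s)^{-a}s^{-\beta p_{2}}ds$, which converge precisely because $\beta p_{2}<1$ and $a<1$. The homogeneity then returns $t^{-\beta}$ and $t^{-\alpha}$; this is where $\alpha<\beta p_{2}<1$ enters.
\item For $I_{3}$, we extend the nonlinearity by the Stein extension (\ref{cont-ext1}) and bound $P(x'-y',x_{n}+y_{n}+t-s)$ via (\ref{eq:P_kernel_est2}) by a Riesz kernel in $\mathbb{R}^{n}$. We then use (\ref{xiao_liu_n}) for interior norms and the trace estimate (\ref{xiao_liu_n_1}) for the boundary norm. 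The time integrals converge by $\alpha p_{1}<1$, and the bookkeeping uses $\beta<\alpha p_{1}$.
\item For $I_{4}$, which is instantaneous in time, we use $|G|\lesssim|x-y|^{2-n}$ from (\ref{eq:Green_est}) and the Stein extension. Then (\ref{xiao_liu_n}) with $\gamma=2$ maps $\mathcal{M}_{q_{0}/p_{1},\mu}\to\mathcal{M}_{q_{0},\mu}$, because $\frac{n-\mu}{q_{0}}(p_{1}-1)=2$; the condition $p_{1}>\frac{n-\mu}{n-\mu-2}$ ensures $q_{0}/p_{1}>1$. For the $q_{1}$-norm we use mixed Hölder with the $q_{0}$ and $q_{1}$ norms so as to land at $t^{-\alpha}$. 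The boundary trace comes from (\ref{xiao_liu_n_1}), with the mixture chosen to give $t^{-\beta}$; the hypothesis $n-\gamma p<n-1$ must be checked here.
\end{itemize}
Differences $u|u|^{p-1}-v|v|^{p-1}$ are handled by $|u-v|(|u|^{p-1}+|v|^{p-1})$ and the same bounds.

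\textbf{Step 3 (initial trace).} Test against $g\in\mathcal{H}_{\widetilde{q}_{0}',\mu}$ using the duality (\ref{dualidad_M_H}). First, $P_{t}\ast\varphi\rightharpoonup\varphi$, since $\langle P_{t}\ast\varphi,g\rangle=\langle\varphi,P_{t}\ast g\rangle$ and $P_{t}\ast g\to g$ in the block space: this holds for a single block by $L^{q}$ approximation and extends by density of finite block sums. Second, the nonlinear boundary terms satisfy $|\langle B(u)_{0}(t),g\rangle|\to0$ for $g$ a nice block: we estimate them in $\mathcal{M}_{q_{2},\mu}$ by $Ct^{-\beta}\cdot t^{\beta}$-type bounds. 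Using the Beta integral computations shows they are $O(t^{\kappa})$ with $\kappa>0$ in a suitable interpolated norm, or we pair at the $q_{2}$ level with a compactly supported $g$ and gain a positive power from the strict inequalities. The $I_{4}$ boundary term needs care, since it is not small merely from time integration; here I would instead use that its $\mathcal{M}_{\widetilde{q}_{0},\mu}$ norm is bounded by $C\varepsilon^{p_{1}}$ and argue via density.

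The main obstacle is Step 2. Matching every Riesz-potential exponent to the hypotheses of (\ref{xiao_liu_n_1}) and (\ref{xiao_liu_n}), choosing $\theta$ and the Hölder splittings for $I_{3}$ and $I_{4}$ so that each resulting time integral converges with exactly the weights $t^{-\alpha}$ and $t^{-\beta}$, is the heart of the argument. I would first fix exponents by scaling and only then verify integrability.
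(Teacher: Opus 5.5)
Your Steps 1--2 and the contraction/Lipschitz argument follow the paper's route: the kernel bounds (\ref{eq:P_kernel_est1})--(\ref{eq:P_kernel_est2}), the Riesz and trace estimates, the Stein extension, Morrey--H\"older and Beta integrals. One slip to correct: $\widetilde{q}_{0}=(n-1-\mu)(p_{2}-1)$, i.e.\ $\frac{n-1-\mu}{\widetilde{q}_{0}}=\frac{1}{p_{2}-1}$; this is the identity that actually yields the exponent $\beta$.

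\textbf{The gap: the $I_{4}$ boundary term in Step 3.} You treat $I_{4}[u|u|^{p_{1}-1}](\cdot,0,t)$ as a nonzero contribution and propose to dispose of it through its uniform $\mathcal{M}_{\widetilde{q}_{0},\mu}$ bound ``plus density''. That cannot work. Density only transfers convergence from a dense class of test functions to all of $\mathcal{H}_{\widetilde{q}_{0}^{\prime},\mu}$; it does not create it. A family that is merely bounded in $\mathcal{M}_{\widetilde{q}_{0},\mu}$ may have nonzero weak-$\ast$ cluster points. Since $I_{4}$ carries no time integral over $(0,t)$, scaling gives it only an $O(1)$ bound, and nothing makes it small as $t\to0^{+}$.

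The missing observation is that the Dirichlet Green function vanishes on the boundary. For $x=(x^{\prime},0)$ one has $|x-y|=|x-\tilde{y}|$, hence $G((x^{\prime},0),y)=0$ and $I_{4}[f](x^{\prime},0,t)\equiv0$. So the trace of $u$ involves only $I_{1},I_{2},I_{3}$. Your trace estimate for $I_{4}$ in Step 2 is then superfluous, though harmless.

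\textbf{The $I_{2}$, $I_{3}$ boundary terms.} As written, neither of your two options yields decay. Pairing at the $q_{2}$ level only gives $|\langle I_{2}[\cdot](\cdot,0,t),g\rangle|\lesssim t^{-\beta}$, and the $\widetilde{q}_{0}$ level gives $O(1)$. Interpolating between these two bounds produces only powers in $[-\beta,0]$; a positive power appears only below the critical index $\widetilde{q}_{0}$.

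The paper handles this with Lemma \ref{Lem-Poisson-Block}, in three moves:
\begin{enumerate}
\item It splits $\psi=\psi_{N}+(\psi-\psi_{N})$.
\item It controls the tail $\psi-\psi_{N}$ by the uniform $\mathcal{M}_{\widetilde{q}_{0},\mu}$ bound, namely the factor $C_{0}(t-s)^{-(1-\beta p_{2})}$ integrated against $s^{-\beta p_{2}}$.
\item For the mollified finite sum, it moves $P_{t-s}$ onto the test function, which stays bounded in $\mathcal{H}_{(q_{2}/p_{2})^{\prime},\mu}$ for bounded $t$. It pairs this with $u_{0}|u_{0}|^{p_{2}-1}(s)$, whose $\mathcal{M}_{q_{2}/p_{2},\mu}$ norm is $O(s^{-\beta p_{2}})$. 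Integrating over $(0,t)$ gives $t^{1-\beta p_{2}}\to0$. Analogously, $I_{3}$ gives $t^{1-\alpha p_{1}}$ via $\mathcal{M}_{q_{1}/p_{1},\mu}(\mathbb{R}_{+}^{n})$.
\end{enumerate}

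An equivalent route for you: bound $I_{2}[\cdot](\cdot,0,t)$ directly in $\mathcal{M}_{q_{2}/p_{2},\mu}(\mathbb{R}^{n-1})$ by $Ct^{1-\beta p_{2}}$. Test against bounded compactly supported functions, which are dense in $\mathcal{H}_{\widetilde{q}_{0}^{\prime},\mu}$. Then use the uniform $\mathcal{M}_{\widetilde{q}_{0},\mu}$ bound to pass to general test functions.
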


In the next theorem, we provide results on qualitative properties for the
solutions obtained in Theorem \ref{theo}, such as self-similarity, symmetry,
and positivity.

\begin{theorem}
\label{cor:consequences}Let $u$ be the solution $u$ obtained in
Theorem~\ref{theo}. We have the following properties:

\begin{itemize}
\item[(i)] (Self-similarity) Assume that the boundary data satisfies
$\varphi(x^{\prime})=\lambda^{\frac{1}{p_{2}-1}}\varphi(\lambda x^{\prime})$,
for all $\lambda>0$, $x^{\prime}\in\mathbb{R}^{n-1}$. Then, the corresponding
solution $u$ is self-similar, that is,%
\[
u(x,t)=\lambda^{\frac{1}{p_{2}-1}}u(\lambda x,\lambda t).
\]

\item[(ii)] (Axial symmetry) Let $\varphi$ be radially symmetric, that is
$\varphi(M(x^{\prime}))=\varphi(x^{\prime})$ for all rotation matrices $M$ in
$\mathbb{R}^{n-1}$. Then, the solution $u$ is axially symmetric with respect
to the $x_{n}$-axis, that is,
\[
u(M(x^{\prime}),x_{n},t)=u(x,t).
\]

\item[(iii)] (Positivity) Let $U\subset\mathbb{R}^{n-1}$ be a positive-measure
set. If $\varphi\geq0$ (resp. $\leq0$) a.e. in $\mathbb{R}^{n-1}$ with
$\varphi>0$ (resp. $<0$) a.e. in $U$, then $u$ is positive (resp. negative)
a.e. in $\mathbb{R}_{+}^{n}\times(0,\infty)$ and a.e. in $\mathbb{R}%
^{n-1}\times(0,\infty).$
\end{itemize}
\end{theorem}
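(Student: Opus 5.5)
All three properties will follow from one principle: the solution of Theorem~\ref{theo} is the limit in $\mathcal{X}$ of the Picard iterates
\[
u^{1}=I_{1}[\varphi],\qquad u^{k+1}=I_{1}[\varphi]+I_{2}[u^{k}_{0}|u^{k}_{0}|^{p_{2}-1}]+I_{3}[u^{k}|u^{k}|^{p_{1}-1}]+I_{4}[u^{k}|u^{k}|^{p_{1}-1}],
\]
and the solution is unique in the ball $\{\Vert u\Vert_{\mathcal{X}}\leq2\varepsilon\}$. So each property will be checked in one of two ways. Either we show the relevant transformation $T$ maps solutions to solutions of the same problem with data $T\varphi$, preserves $\Vert\cdot\Vert_{\mathcal{X}}$, and $T\varphi=\varphi$, and then conclude by uniqueness. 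Or we show the property is inherited by each iterate and is closed under convergence in $\mathcal{X}$.

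\textbf{(i) Self-similarity.} Set $u_{\lambda}(x,t)=\lambda^{\frac{1}{p_{2}-1}}u(\lambda x,\lambda t)$. We first check that the kernels are homogeneous: $P(\lambda z,\lambda s)=\lambda^{1-n}P(z,s)$ and $G(\lambda x,\lambda y)=\lambda^{2-n}G(x,y)$. Changing variables $y'\mapsto\lambda y'$, $y\mapsto\lambda y$, $s\mapsto\lambda s$ in $I_{1},\dots,I_{4}$ and using $\frac{2}{p_{1}-1}=\frac{1}{p_{2}-1}$ (i.e.\ $p_{1}=2p_{2}-1$), the powers of $\lambda$ should match. For example, in $I_{2}$ the factor is $\lambda^{1-n}\lambda^{n-1}\lambda\,\lambda^{-\frac{p_{2}}{p_{2}-1}}=\lambda^{-\frac{1}{p_{2}-1}}$, which gives $(u_{\lambda})$ solving (\ref{int1}) with data $\varphi_{\lambda}$. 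The other terms are checked the same way. Next, $\Vert u_{\lambda}\Vert_{\mathcal{X}}=\Vert u\Vert_{\mathcal{X}}$, because the Morrey norms scale as $\lambda^{\frac{1}{p_{2}-1}-\frac{n-\mu}{q}}$, respectively $\lambda^{\frac{1}{p_2-1}-\frac{n-1-\mu}{q_2}}$ on the boundary, and these are compensated exactly by $t^{\alpha}$, $t^{\beta}$ through the definitions in (\ref{Cond-H0}). Similarly $\Vert\varphi_{\lambda}\Vert_{\mathcal{M}_{\widetilde q_0,\mu}}=\Vert\varphi\Vert$. Since $\varphi_{\lambda}=\varphi$, uniqueness gives $u_{\lambda}=u$.

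\textbf{(ii) Axial symmetry.} Put $u_{M}(x',x_{n},t)=u(Mx',x_{n},t)$. Since $P(z,\cdot)$ depends on $|z|$ and $G$ on $|x-y|$ and $|x-\tilde y|$, and rotation in $x'$ preserves Lebesgue measure and Morrey norms (balls map to balls), the same argument as in (i) applies: $u_{M}$ solves the problem with data $\varphi\circ M=\varphi$ and has the same $\mathcal{X}$-norm. Uniqueness then gives $u_{M}=u$.

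\textbf{(iii) Positivity.} Here we use the iterates. Since $P>0$ and $\varphi\geq0$, we have $I_{1}[\varphi]\geq0$. Since $G\geq0$ (as $|x-\tilde y|\geq|x-y|$), $P>0$, and $s\mapsto s|s|^{p-1}$ maps nonnegatives to nonnegatives, induction gives $u^{k}\geq0$ for all $k$, together with $u^{k}\geq I_{1}[\varphi]$. Convergence in $\mathcal{X}$ implies a.e.\ convergence along a subsequence for a.e.\ $t$, both in the interior and for the trace, since the boundary component is part of the norm. Hence $u\geq I_{1}[\varphi]$ a.e. Strict positivity then comes from $I_{1}[\varphi](x',x_{n},t)\geq\int_{U}P(x'-y',x_{n}+t)\varphi(y')dy'>0$ for every $(x',x_n,t)$, because $P$ is strictly positive and $\varphi>0$ on a set of positive measure; this includes $x_{n}=0$. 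The negative case follows by applying this to $-u$, using the oddness of the nonlinearities. The main obstacle is this last step: we must justify that the limit of the iterates is the solution given by Theorem~\ref{theo}, which holds by the contraction construction, and we must pass pointwise inequalities through convergence that is only in Morrey norms, for both $u$ and its trace $u(\cdot,0,\cdot)$. This is done by local $L^{q}$ convergence on balls and extraction of a.e.\ convergent subsequences.
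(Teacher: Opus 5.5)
Your proposal is correct and follows essentially the same route as the paper. For (i) and (ii) you use the homogeneity (resp.\ rotation invariance) of $P$ and $G$, the invariance of $\Vert\cdot\Vert_{\mathcal{X}}$, and uniqueness in $D_{2\varepsilon}$. For (iii) you propagate nonnegativity through the Picard iterates starting from $I_{1}[\varphi]$, pass to a.e.\ limits, and conclude $u\geq I_{1}[\varphi]>0$ both in the interior and on the boundary. There are two minor differences. You bound each iterate below by $I_{1}[\varphi]$ directly instead of proving the monotonicity $u^{(k+1)}\geq u^{(k)}$. You also make the negative case explicit via oddness of the nonlinearities and uniqueness, which the paper leaves implicit.
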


\bigskip

We now present an asymptotic stability result, showing that small
perturbations in the boundary data become negligible as time goes to infinity,
leading the solution to converge to the unperturbed configuration.

\begin{theorem}
[Asymptotic stability]\label{teo-asy} Suppose the same hypotheses of Theorem
\ref{theo}. Let $u$ and $v\in\mathcal{X}$ be solutions with the respective
boundary data $\varphi$ and $\psi$ obtained in Theorem \ref{theo}. If the
initial perturbation $\varphi-\psi$ satisfies
\begin{equation}
\lim_{t\rightarrow\infty}\left(  \left\Vert I_{1}[\varphi-\psi](\cdot
,t)\right\Vert _{\mathcal{M}_{q_{0},\mu}(\mathbb{R}_{+}^{n})}+t^{\alpha
}\left\Vert I_{1}[\varphi-\psi](\cdot,t)\right\Vert _{\mathcal{M}_{q_{1},\mu
}(\mathbb{R}_{+}^{n})}+t^{\beta}\left\Vert I_{1}[\varphi-\psi](\cdot
,t)\right\Vert _{\mathcal{M}_{q_{2},\mu}(\mathbb{R}^{n-1})}\right)  =0,
\label{cond-asymp1}%
\end{equation}
then we have that
\begin{equation}
\lim_{t\rightarrow\infty}\left(  \left\Vert (u-v)(\cdot,t)\right\Vert
_{\mathcal{M}_{q_{0},\mu}(\mathbb{R}_{+}^{n})}+t^{\alpha}\left\Vert
(u-v)(\cdot,t)\right\Vert _{\mathcal{M}_{q_{1},\mu}(\mathbb{R}_{+}^{n}%
)}+t^{\beta}\left\Vert (u-v)(\cdot,t)\right\Vert _{\mathcal{M}_{q_{2},\mu
}(\mathbb{R}^{n-1})}\right)  =0. \label{asymp1}%
\end{equation}

\end{theorem}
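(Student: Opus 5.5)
The plan is the standard Kato-type argument for asymptotic stability, built on the contraction estimates from the proof of Theorem \ref{theo}. Put $w=u-v$. Subtracting the integral equations (\ref{int1}) for $u$ and $v$ gives
\begin{align*}
w=I_{1}[\varphi-\psi]&+I_{2}[u_{0}|u_{0}|^{p_{2}-1}-v_{0}|v_{0}|^{p_{2}-1}]\\
&+I_{3}[u|u|^{p_{1}-1}-v|v|^{p_{1}-1}]+I_{4}[u|u|^{p_{1}-1}-v|v|^{p_{1}-1}].
\end{align*}
For the nonlinear differences I will use the pointwise bound $|a|a|^{p-1}-b|b|^{p-1}|\leq p|a-b|(|a|^{p-1}+|b|^{p-1})$ together with the H\"older inequality (\ref{Holder1}). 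Define
\[
\Phi(t)=\Vert w(t)\Vert_{\mathcal{M}_{q_{0},\mu}}+t^{\alpha}\Vert w(t)\Vert_{\mathcal{M}_{q_{1},\mu}}+t^{\beta}\Vert w(\cdot,0,t)\Vert_{\mathcal{M}_{q_{2},\mu}},
\]
and let $A=\limsup_{t\to\infty}\Phi(t)$. This is finite, since $\Phi(t)\leq\Vert u\Vert_{\mathcal{X}}+\Vert v\Vert_{\mathcal{X}}\leq4\varepsilon$. The goal is to prove $A\leq K\varepsilon^{p_{2}-1}A+K\varepsilon^{p_{1}-1}A$. Once $\varepsilon$ is small enough that $K(\varepsilon^{p_{1}-1}+\varepsilon^{p_{2}-1})<1$, this forces $A=0$, which is (\ref{asymp1}); the term $I_{1}[\varphi-\psi]$ contributes nothing in the limit by (\ref{cond-asymp1}).

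\textbf{Estimates with explicit time kernels.} I will redo the Morrey estimates of Sections \ref{Sec4.1}--\ref{Sec4.2} for $I_{2},I_{3},I_{4}$, but keep the time integrands explicit instead of taking suprema over time. For $I_{2}$ in the $\mathcal{M}_{q_{1},\mu}$-norm this gives a bound of the form
\[
t^{\alpha}\Vert I_{2}[\cdots](t)\Vert\leq C\varepsilon^{p_{2}-1}t^{\alpha}\int_{0}^{t}(t-s)^{-a}s^{-\beta p_{2}}\,s^{\beta}\Vert w(\cdot,0,s)\Vert_{\mathcal{M}_{q_{2},\mu}}\,ds.
\]
The exponents $a$ come from the kernel bounds (\ref{eq:P_kernel_est1})--(\ref{eq:P_kernel_est2}) and the trace lemma (\ref{xiao_liu_n_1}). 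By scaling, $t^{\alpha}\int_{0}^{t}(t-s)^{-a}s^{-\beta p_{2}}ds$ is a Beta-function constant. Conditions (\ref{Cond-H1}), namely $\beta p_{2}<1$ and $a<1$, are exactly what guarantee integrability at both endpoints. $I_{3}$ is handled the same way, using $\alpha p_{1}<1$ and the Morrey norms of $u,v$ weighted by $t^{\alpha}$.

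$I_{4}$ is instantaneous in time. By (\ref{eq:Green_est}), the extension (\ref{cont-ext1}), and (\ref{xiao_liu_n})/(\ref{xiao_liu_n_1}), at each time $t$ it is bounded by $C\varepsilon^{p_{1}-1}\Phi(t)$. Hence its contribution to the limsup is at most $C\varepsilon^{p_{1}-1}A$ directly.

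\textbf{Main obstacle: passing the limsup through the time integrals.} After the change of variables $s=t\tau$, each Duhamel term takes the form $\int_{0}^{1}k(\tau)\Phi(t\tau)\,d\tau$ with $k\in L^{1}(0,1)$. Fix $\delta\in(0,1)$ and split the integral at $\tau=\delta$. On $[\delta,1]$ we have $t\tau\geq t\delta\to\infty$, so this part is bounded by $\Vert k\Vert_{L^{1}}\sup_{s\geq t\delta}\Phi(s)$, whose limsup is at most $\Vert k\Vert_{L^{1}}A$. On $[0,\delta]$ the bound is $4\varepsilon\int_{0}^{\delta}k$, which tends to $0$ as $\delta\to0$. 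This is Fatou or dominated convergence in the form of a limsup.

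The delicate point is checking that every term really reduces to this form with an integrable $k$. In particular, the singularity of $(1-\tau)^{-a}$ at $\tau=1$ and the factor $\tau^{-\beta p_{2}}$ (respectively $\tau^{-\alpha p_{1}}$) at $\tau=0$ must both be integrable. The mixed terms are where this needs care: an interior norm controlling a boundary norm ($I_{3}$ evaluated at $x_{n}=0$) and vice versa ($I_{2}$ in $\mathcal{M}_{q_{0},\mu}$). For these I would use the hypotheses $\alpha<\beta p_{2}$ and $\beta<\alpha p_{1}$, which make the exponents match. Finally, the constant $K$ is exactly the one from the contraction estimate, so the smallness of $\varepsilon$ already chosen in Theorem \ref{theo} suffices.
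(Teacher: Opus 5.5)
Your proposal is correct and follows essentially the paper's own argument: subtract the two integral equations, reuse the Morrey/kernel estimates for $I_{2},I_{3},I_{4}$ with the time integrals kept explicit, rescale $s=rt$, take $\limsup_{t\to\infty}$, and close with $\Gamma\leq\eta\Gamma$ using the same $\eta<1$ from (\ref{q_menorq1}), with $I_{4}$ handled pointwise in time (its boundary trace vanishes since $G(x',0,y)=0$). The only difference is that your splitting of the $r$-integral at $r=\delta$, dominating the piece on $[0,\delta]$ by $4\varepsilon\int_{0}^{\delta}k$, explicitly justifies the interchange of $\limsup$ and integral that the paper performs without comment.
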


In light of Theorem \ref{cor:consequences} (i) and Theorem \ref{teo-asy}, we
can construct a class of asymptotically self-similar solutions, as discussed
in the remark below.

\begin{remark}
[Asymptotic self-similarity]\label{Rem-self}Under the hypotheses of Theorems
\ref{theo} and \ref{teo-asy}. Let $u$ be a self-similar solution of the
integral equation (\ref{int1}) with initial boundary data $\varphi
\in\mathcal{M}_{\widetilde{q}_{0},\mu}(\mathbb{R}^{n-1})$. Consider the
perturbation $\psi(x^{\prime})=\varphi(x^{\prime})+a(x^{\prime})$ with
$a\in\mathcal{M}_{\frac{(n-1-\mu)(p_{1}-1)}{2+\gamma},\mu}(\mathbb{R}^{n-1})$
where $\gamma>0$ is such that $\frac{(n-1-\mu)(p_{1}-1)}{2+\gamma}>1$ (recall
that $\widetilde{q}_{0}=(n-1-\mu)(p_{1}-1)/2>1$). It follows that
$a=\psi-\varphi$ verifies (\ref{cond-asymp1}) and, consequently, the solution
$v$ with initial boundary data $\psi$ is attracted to the self-similar
solution $u$ in the sense of (\ref{asymp1}). In particular, we can consider
small perturbations $a(x^{\prime})$ belonging to the Schwartz space
$\mathcal{S}(\mathbb{R}^{n-1})\subset\mathcal{M}_{\frac{(n-1-\mu)(p_{1}%
-1)}{2+\gamma},\mu}(\mathbb{R}^{n-1}).$ Thus, we obtain a basin of attraction
around each self-similar solution and a class of asymptotically self-similar solutions.
\end{remark}

\section{\bigskip Key estimates}

\label{Sec4}This section is devoted to deriving key estimates for the boundary
and interior operators in Morrey spaces that appear in the integral
formulation (\ref{int1}). Moreover, we provide an approximate identity-type
property for the Poisson kernel in the framework of block spaces.

\subsection{Linear estimates}

\label{Sec4.1}We start by providing the estimates for the operator $I_{1}$ in
(\ref{int1}).

\begin{lemma}
\label{Lem-I1}Let $p_{1},p_{2}\in(1,\infty)$, $\mu\in\lbrack0,n-1)$,
$\widetilde{q}_{0}=(n-1-\mu)(p_{1}-1)/2$, and let $q_{0},q_{1},q_{2}%
,\alpha,\beta$ be as in (\ref{Cond-H0}). Assume also that $\widetilde{q}%
_{0}>1$, $p_{1}=2p_{2}-1,$ $\alpha\in\left(  0,n-1-\frac{1}{q_{1}}\right)  $,
and $\beta\in\left(  0,n-1\right)  .$ Then, there exists a constant $C_{1}>0$
such that
\begin{equation}
\left\Vert I_{1}[\varphi]\right\Vert _{\mathcal{X}}\leq C_{1}\left\Vert
\varphi\right\Vert _{\mathcal{M}_{\widetilde{q}_{0},\mu}(\mathbb{R}^{n-1})},
\label{lineal_1}%
\end{equation}
for all $\varphi\in\mathcal{M}_{\widetilde{q}_{0},\mu}(\mathbb{R}^{n-1}).$
\end{lemma}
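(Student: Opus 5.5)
The norm $\|I_1[\varphi]\|_{\mathcal{X}}$ has three pieces, and I would bound each one separately. All three rest on the pointwise kernel bounds (\ref{eq:P_kernel_est1})--(\ref{eq:P_kernel_est2}), on the Riesz-potential estimates (\ref{xiao_liu_n_1})--(\ref{xiao_liu_n}) transferred to the lower-dimensional setting, and on scaling. Since $\|\cdot\|_{\mathcal{X}}$ and $\|\cdot\|_{\mathcal{M}_{\widetilde{q}_0,\mu}}$ are invariant under (\ref{sc1}) and (\ref{sc2}), and since $I_1$ commutes with the scaling, it would be enough to prove each estimate at $t=1$. I would nevertheless track the powers of $t$ directly, because this is where the hypotheses $\alpha\in(0,n-1-1/q_1)$ and $\beta\in(0,n-1)$ enter.

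\emph{Boundary term.} On $x_n=0$ we have $I_1[\varphi](x',0,t)=P(\cdot,t)\ast\varphi$. By (\ref{eq:P_kernel_est2}) with $y_n=x_n=0$, for $\theta\in[0,1)$,
\[
|P(z,t)|\le c_n\, t^{-\theta(n-1)}\,|z|^{-(1-\theta)(n-1)} .
\]
So $|I_1[\varphi](\cdot,0,t)|\le c_n t^{-\theta(n-1)}\,|\cdot|^{-(n-1-\gamma)}\ast|\varphi|$ in $\mathbb{R}^{n-1}$, where $\gamma=\theta(n-1)$. I would apply the Hardy--Littlewood--Sobolev inequality in Morrey spaces, i.e. (\ref{xiao_liu_n}) written in dimension $n-1$, with $\gamma=\frac{n-1-\mu}{\widetilde q_0}-\frac{n-1-\mu}{q_2}=\frac{2}{p_1-1}-\frac{n-1-\mu}{q_2}=\frac{1}{p_2-1}-\frac{n-1-\mu}{q_2}=\beta$, using $p_1=2p_2-1$. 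The choice is therefore $\theta=\beta/(n-1)\in(0,1)$, which is admissible precisely because $\beta\in(0,n-1)$. This yields $t^{\beta}\|I_1[\varphi](\cdot,0,t)\|_{\mathcal{M}_{q_2,\mu}}\le C\|\varphi\|_{\mathcal{M}_{\widetilde q_0,\mu}}$.

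\emph{Interior terms.} Here I must pass from $\varphi$ on $\mathbb{R}^{n-1}$ to a Morrey norm on $\mathbb{R}^n_+$. I would estimate by duality against a block: take $g$ a $(q_1',\mu)$-block in $\mathbb{R}^n_+$ and write
\[
\int_{\mathbb{R}^n_+} I_1[\varphi]\,g\,dx=\int_{\mathbb{R}^{n-1}}\varphi(y')\Big(\int_0^\infty\!\!\int_{\mathbb{R}^{n-1}}P(x'-y',x_n+t)\,g(x',x_n)\,dx'\,dx_n\Big)dy' .
\]
The alternative is a direct route: for fixed $x_n$, $I_1[\varphi](\cdot,x_n,t)=P(\cdot,x_n+t)\ast\varphi$. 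Applying Minkowski in $x_n$ together with (\ref{eq:P_kernel_est1}) gives
\[
\big\|I_1[\varphi](x',\cdot,t)\big\|_{L^{q_1}(dx_n)}\le \tilde c_n\,t^{-\theta(n-1-1/q_1)}\,\big(|\cdot|^{-(1-\theta)(n-1-1/q_1)}\ast|\varphi|\big)(x').
\]
The Morrey norm on a half-ball $B_r(x_0)\cap\mathbb{R}^n_+$ is controlled by the $L^{q_1}$ norm over the cylinder $B'_r(x_0')\times(0,\infty)$. This is exactly $\|\,\|I_1\|_{L^{q_1}(dx_n)}\|_{L^{q_1}(B'_r)}$, so it reduces to an $(n-1)$-dimensional Morrey estimate of a Riesz potential of order $\gamma=\theta(n-1-1/q_1)$, mapping $\mathcal{M}_{\widetilde q_0,\mu}$ to $\mathcal{M}_{q_1,\mu}(\mathbb{R}^{n-1})$.

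The scaling count needs care. The Morrey normalization $r^{-\mu/q_1}$ in $\mathbb{R}^n$ combined with the cylinder integral loses one dimension. The correct exponent should come out as $\frac{n-1-\mu}{\widetilde q_0}-\frac{n-1-\mu}{q_1}-\frac1{q_1}=\frac{2}{p_1-1}-\frac{n-\mu}{q_1}=\alpha$. So I set $\theta(n-1-1/q_1)=\alpha$, which is admissible thanks to $\alpha<n-1-1/q_1$. The Riesz potential order is then $\frac{n-1-\mu}{\widetilde q_0}-\frac{n-1-\mu}{q_1}=\alpha+\frac1{q_1}$, which is not the same as $\gamma=\alpha$. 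That signals the cylinder bound is too crude. I would fix it by replacing the cylinder with the truncated cylinder $B'_r\times(0,r)$ and using the Poisson bound with $x_n<r$ to recover the missing $r^{1/q_1}$ factor. If that fails, I would use duality with blocks instead, or an estimate like (\ref{xiao_liu_n_1}) read in reverse.

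The $\mathcal{M}_{q_0,\mu}(\mathbb{R}^n_+)$ bound, with no time weight, is the endpoint of the same computation with weight exponent $0$. It is not covered by the $\theta>0$ scheme, so I would treat it separately: the $n$-dimensional Poisson extension is a bounded map $\mathcal{M}_{\widetilde q_0,\mu}(\mathbb{R}^{n-1})\to\mathcal{M}_{q_0,\mu}(\mathbb{R}^n_+)$, uniformly in $t$ since shifting $x_n\mapsto x_n+t$ only helps. I expect this step to be the main obstacle: the matching between the $(n-1)$-dimensional and $n$-dimensional Morrey scalings, including the localization in $x_n$, is where the exponents must be checked most carefully.
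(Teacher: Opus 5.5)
Your boundary estimate is exactly the paper's. Your ``direct route'' for the interior terms (the inclusion $\Omega_r(x_0)\subset B'_r(x_0')\times(0,\infty)$, Minkowski in $x_n$, then (\ref{eq:P_kernel_est1})) is also precisely how the paper proves the $\mathcal{M}_{q_1,\mu}$ and $\mathcal{M}_{q_0,\mu}$ bounds. The gap is that you then discard this route because of a miscount, so as written neither interior estimate is established.

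After (\ref{eq:P_kernel_est1}) the kernel is $|x'-y'|^{-(1-\theta)(n-1-1/q_1)}$. Hence the order of the Riesz potential on $\mathbb{R}^{n-1}$ is $(n-1)-(1-\theta)(n-1-\tfrac{1}{q_1})=\theta(n-1-\tfrac{1}{q_1})+\tfrac{1}{q_1}$, not $\theta(n-1-\tfrac{1}{q_1})$. Taking the $L^{q_1}(dx_n)$-norm has already lowered the homogeneity of the kernel from $n-1$ to $n-1-1/q_1$. You carried over ``order $=$ time exponent'' from the boundary term, where it is correct only because there is no $x_n$-integration. With $\theta(n-1-1/q_1)=\alpha$ the order is $\alpha+\tfrac{1}{q_1}=\tfrac{n-1-\mu}{\widetilde q_0}-\tfrac{n-1-\mu}{q_1}$. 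This is exactly the index (\ref{xiao_liu_n}) requires in dimension $n-1$ for the map $\mathcal{M}_{\widetilde q_0,\mu}\to\mathcal{M}_{q_1,\mu}$.

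Nor does the cylinder lose a dimension. Since $x_0\in\mathbb{R}^n_+$, the projection of $\Omega_r(x_0)$ is the full ball $B'_r(x_0')$, and the normalization $r^{-\mu/q_1}$ is the same in both spaces. Therefore $\|f\|_{\mathcal{M}_{q_1,\mu}(\mathbb{R}^n_+)}\le\bigl\|\,\|f(x',\cdot)\|_{L^{q_1}(0,\infty)}\bigr\|_{\mathcal{M}_{q_1,\mu}(\mathbb{R}^{n-1})}$, and the extra $1/q_1$ in the $n$-dimensional scaling is carried by the $x_n$-norm. There is no missing factor $r^{1/q_1}$, so the truncated-cylinder and block-duality fallbacks address a nonexistent deficit.

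The same miscount is why you regard the $\mathcal{M}_{q_0,\mu}$ term as a degenerate endpoint. You leave it as an unproved claim (boundedness of the Poisson extension $\mathcal{M}_{\widetilde q_0,\mu}(\mathbb{R}^{n-1})\to\mathcal{M}_{q_0,\mu}(\mathbb{R}^n_+)$) and flag it as the main obstacle. In fact (\ref{eq:P_kernel_est1}) holds with $\theta=0$ and gives the kernel $|x'-y'|^{-(n-1-1/q_0)}$, a nondegenerate Riesz potential of order $1/q_0$. Moreover $\tfrac{n-1-\mu}{\widetilde q_0}-\tfrac{n-1-\mu}{q_0}=\tfrac{2}{(p_1-1)(n-\mu)}=\tfrac{1}{q_0}$. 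So the identical step gives $\sup_{t>0}\|I_1[\varphi](\cdot,t)\|_{\mathcal{M}_{q_0,\mu}(\mathbb{R}^n_+)}\le C\|\varphi\|_{\mathcal{M}_{\widetilde q_0,\mu}(\mathbb{R}^{n-1})}$, uniformly in $t$, with no separate argument.

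With the order counted correctly, use $\theta=\alpha/(n-1-1/q_1)$ for the $q_1$ term, $\theta=0$ for the $q_0$ term, and $\theta=\beta/(n-1)$ on the boundary. Your first approach is then complete and coincides with the paper's proof.
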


\textbf{Proof.} First, recall that the $\mathcal{X}$-norm consists of three
components, which will be treated separately. Let $[\Omega_{r}(x_{0}%
)]^{\prime}$ denote the projection of $\Omega_{r}(x_{0})=\mathbb{R}_{+}%
^{n}\cap B(x_{0},r)$ onto $\partial\mathbb{R}_{+}^{n}$. Using properties of
the Poisson kernel $P$, and noting that $\Omega_{r}(x_{0})\subset\lbrack
\Omega_{r}(x_{0})]^{\prime}\times(0,\infty)$, we can estimate%
\begin{align}
t^{\alpha}\left\Vert I_{1}[\varphi](\cdot,t)\right\Vert _{L^{q_{1}}(\Omega
_{r}(x_{0}))}  &  \leq t^{\alpha}\left\Vert \int_{\mathbb{R}^{n-1}}%
P(x^{\prime}-y^{\prime},x_{n}+t)|\varphi(y^{\prime})|dy^{\prime}\right\Vert
_{L^{q_{1}}([\Omega_{r}(x_{0})]^{\prime}\times(0,\infty))}\nonumber\\
&  \leq t^{\alpha}\left\Vert \int_{\mathbb{R}^{n-1}}\left\Vert P(x^{\prime
}-y^{\prime},x_{n}+t)\right\Vert _{L^{q_{1}}((0,\infty),dx_{n})}%
|\varphi(y^{\prime})|dy^{\prime}\right\Vert _{L^{q_{1}}([\Omega_{r}%
(x_{0})]^{\prime},dx^{\prime})}\nonumber\\
&  \leq\tilde{c}_{n}\frac{t^{\alpha}}{t^{\theta\left(  n-1-\frac{1}{q_{1}%
}\right)  }}\left\Vert \int_{\mathbb{R}^{n-1}}\frac{|\varphi(y^{\prime}%
)|}{|x^{\prime}-y^{\prime}|^{(1-\theta)\left(  n-1-\frac{1}{q_{1}}\right)  }%
}dy^{\prime}\right\Vert _{L^{q_{1}}([\Omega_{r}(x_{0})]^{\prime},dx^{\prime}%
)}, \label{eq:est_lineales_1.1_1}%
\end{align}
where the last inequality follows from (\ref{eq:P_kernel_est1}), with
$\theta\in(0,1)$ chosen so that $\theta\left(  n-1-\frac{1}{q_{1}}\right)
=\alpha$. With the definition of the Morrey norm in mind and using
(\ref{eq:est_lineales_1.1_1}), we arrive at
\begin{equation}
t^{\alpha}\left\Vert I_{1}[\varphi](\cdot,t)\right\Vert _{\mathcal{M}%
_{q_{1},\mu}(\mathbb{R}_{+}^{n})}\leq\tilde{c}_{n}\left\Vert \int
_{\mathbb{R}^{n-1}}\frac{1}{|x^{\prime}-y^{\prime}|^{n-1-\gamma_{1,1}}%
}|\varphi(y^{\prime})|dy^{\prime}\right\Vert _{\mathcal{M}_{q_{1},\mu
}(\mathbb{R}^{n-1})}, \label{aux-est-I1-1}%
\end{equation}
where
\[
\gamma_{1,1}=\frac{n-1-\mu}{\widetilde{q}_{0}}-\frac{n-1-\mu}{q_{1}}%
=\alpha+\frac{1}{q_{1}}\in(0,n-1).
\]
Applying estimate (\ref{xiao_liu_n}) to the R.H.S. of (\ref{aux-est-I1-1}) and
then taking the supremum over $t>0$ yields
\begin{equation}
\sup_{t>0}t^{\alpha}\left\Vert I_{1}[\varphi](\cdot,t)\right\Vert
_{\mathcal{M}_{q_{1},\mu}(\mathbb{R}_{+}^{n})}\leq C_{1,1}\left\Vert
\varphi\right\Vert _{\mathcal{M}_{\widetilde{q}_{0},\mu}(\mathbb{R}^{n-1})},
\label{eq:est_lineales_1.1}%
\end{equation}
for some constant $C_{1,1}>0$.

Next, using the kernel estimate (\ref{eq:P_kernel_est2}) with $\theta
(n-1)=\beta$ and $x_{n}=y_{n}=0,$ we obtain
\begin{align}
t^{\beta}\Vert I_{1}[\varphi](\cdot,0,t)\Vert_{\mathcal{M}_{q_{2},\mu
}(\mathbb{R}^{n-1})}  &  \leq c_{n}\frac{t^{\beta}}{t^{\theta(n-1)}}\left\Vert
\int_{\mathbb{R}^{n-1}}\frac{|\varphi(y^{\prime})|}{|x^{\prime}-y^{\prime
}|^{(1-\theta)(n-1)}}dy^{\prime}\right\Vert _{\mathcal{M}_{q_{2},\mu
}(\mathbb{R}^{n-1})}\nonumber\\
&  =c_{n}\left\Vert \int_{\mathbb{R}^{n-1}}\frac{|\varphi(y^{\prime}%
)|}{|x^{\prime}-y^{\prime}|^{n-1-\gamma_{1,2}}}dy^{\prime}\right\Vert
_{\mathcal{M}_{q_{2},\mu}(\mathbb{R}^{n-1})}\,, \label{aux-est-I1-2}%
\end{align}
where $\gamma_{1,2}=\frac{n-1-\mu}{\widetilde{q}_{0}}-\frac{n-1-\mu}{q_{2}%
}=\beta\in(0,n-1)$, since $p_{1}=2p_{2}-1$. Thus, in light of
(\ref{xiao_liu_n}), we can estimate the right-hand side of (\ref{aux-est-I1-2}%
) in order to get
\begin{equation}
\sup_{t>0}t^{\beta}\Vert I_{1}[\varphi](\cdot,0,t)\Vert_{\mathcal{M}%
_{q_{2},\mu}(\mathbb{R}^{n-1})}\leq C_{1,2}\Vert\varphi\Vert_{\mathcal{M}%
_{\widetilde{q}_{0},\mu}(\mathbb{R}^{n-1})}. \label{eq:est_lineales_1.2}%
\end{equation}
Now we turn to the remaining term in\ (\ref{norm1}). Proceeding as in
(\ref{eq:est_lineales_1.1_1}), but this time using (\ref{eq:P_kernel_est1})
with $\theta=0$, we have that%
\begin{align}
&  \Vert I_{1}[\varphi](\cdot,t)\Vert_{\mathcal{M}_{q_{0},\mu}(\mathbb{R}%
_{+}^{n})}\nonumber\\
&  \leq\left\Vert \int_{\mathbb{R}^{n-1}}\left\Vert P(x^{\prime}-y^{\prime
},x_{n}+t)\right\Vert _{L^{\frac{(n-\mu)(p_{1}-1)}{2}}((0,\infty),dx_{n}%
)}|\varphi(y^{\prime})|dy^{\prime}\right\Vert _{\mathcal{M}_{q _{0},\mu
}(\mathbb{R}^{n-1})}\nonumber\\
&  \leq\tilde{c}_{n}\left\Vert \int_{\mathbb{R}^{n-1}}\frac{1}{|x^{\prime
}-y^{\prime}|^{n-1-\gamma_{1,3}}}|\varphi(y^{\prime})|dy^{\prime}\right\Vert
_{\mathcal{M}_{q_{0},\mu}(\mathbb{R}^{n-1})}, \label{aux-est-I1-3}%
\end{align}
where $\gamma_{1,3}=\frac{n-1-\mu}{\widetilde{q}_{0}}-\frac{n-1-\mu}{q_{0}%
}=\frac{2}{(p_{1}-1)(n-\mu)}$. From (\ref{aux-est-I1-3}) and (\ref{xiao_liu_n}%
), we arrive at
\begin{equation}
\sup_{t>0}\Vert I_{1}[\varphi](\cdot,t)\Vert_{\mathcal{M}_{q_{0} ,\mu
}(\mathbb{R}_{+}^{n})}\leq C_{1,3}\Vert\varphi\Vert_{\mathcal{M}%
_{\widetilde{q}_{0},\mu}(\mathbb{R}^{n-1})}, \label{eq:est_lineales_1.3}%
\end{equation}
after taking the supremum over all $t>0$. Estimate (\ref{lineal_1}) then
follows by combining estimates (\ref{eq:est_lineales_1.1}),
(\ref{eq:est_lineales_1.2}) and (\ref{eq:est_lineales_1.3}), and setting
$C_{1}=C_{1,1}+C_{1,2}+C_{1,3}$.

\qed

In the following lemma we provide estimates for the operator $I_{2}.$

\begin{lemma}
\label{Lem-I2}Let $p_{1},p_{2}\in(1,\infty)$, $\mu\in\lbrack0,n-1)$, and let
$q_{0},q_{1},q_{2},\alpha,\beta$ be as in (\ref{Cond-H0}). Suppose further
that $l=q_{2}/p_{2}>1,$ $p_{1}=2p_{2}-1,$ and $0<\alpha<\beta p_{2}<1$. Then,
there exists a constant $C_{2}>0$ such that
\begin{equation}
\left\Vert I_{2}[g]\right\Vert _{\mathcal{X}}\leq C_{2}\left(  \sup
_{t>0}t^{\beta p_{2}}\left\Vert g(\cdot,t)\right\Vert _{\mathcal{M}_{l,\mu
}(\mathbb{R}^{n-1})}\right)  , \label{lineal_2}%
\end{equation}
for all $t^{\beta p_{2}}g\in L^{\infty}((0,\infty);\mathcal{M}_{l,\mu
}(\mathbb{R}^{n-1})).$
\end{lemma}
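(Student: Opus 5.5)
We split $\left\Vert I_{2}[g]\right\Vert_{\mathcal{X}}$ into its three components, as in the proof of Lemma \ref{Lem-I1}, and set
\[
K:=\sup_{s>0}s^{\beta p_{2}}\left\Vert g(\cdot,s)\right\Vert_{\mathcal{M}_{l,\mu}(\mathbb{R}^{n-1})}.
\]
For fixed $t$ we have $I_{2}[g](\cdot,t)=\int_{0}^{t}\int_{\mathbb{R}^{n-1}}P(x^{\prime}-y^{\prime},x_{n}+t-s)\,g(y^{\prime},s)\,dy^{\prime}ds$. By Minkowski's inequality, each Morrey norm of $I_{2}[g](\cdot,t)$ is at most $\int_{0}^{t}$ of the corresponding Morrey norm of the slice $\int_{\mathbb{R}^{n-1}}P(\cdot-y^{\prime},x_{n}+t-s)g(y^{\prime},s)\,dy^{\prime}$. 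Each slice has the same form as $I_{1}$, with $t$ replaced by $t-s$ and $\varphi$ replaced by $g(\cdot,s)\in\mathcal{M}_{l,\mu}$. So we rerun the proof of Lemma \ref{Lem-I1} with one change: the power of $(t-s)$ is no longer absorbed by a prefactor but kept. The resulting kernel in $s$ is then integrated against $s^{-\beta p_{2}}$.

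\textbf{Interior $\mathcal{M}_{q_{1},\mu}$ term.} As in (\ref{eq:est_lineales_1.1_1}), we use (\ref{eq:P_kernel_est1}) with $\theta\left(n-1-\frac{1}{q_{1}}\right)=a_{1}$, and then (\ref{xiao_liu_n}) in $\mathbb{R}^{n-1}$ with
\[
\gamma=\frac{n-1-\mu}{l}-\frac{n-1-\mu}{q_{1}},\qquad \gamma=a_{1}+\frac{1}{q_{1}}.
\]
Using $l=q_{2}/p_{2}$ together with the definitions of $\alpha,\beta$ in (\ref{Cond-H0}) and $p_{1}=2p_{2}-1$, we get $\frac{n-1-\mu}{l}=\frac{p_{2}}{p_{2}-1}-\beta p_{2}$ and $\frac{n-1-\mu}{q_{1}}=\frac{1}{p_{2}-1}-\alpha-\frac{1}{q_{1}}$. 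Hence $a_{1}=1-\beta p_{2}+\alpha$, which lies in $(0,1)$ because $0<\alpha<\beta p_{2}<1$. This gives
\[
\left\Vert I_{2}[g](\cdot,t)\right\Vert_{\mathcal{M}_{q_{1},\mu}}\leq CK\int_{0}^{t}(t-s)^{-a_{1}}s^{-\beta p_{2}}\,ds=CK\,B(1-a_{1},1-\beta p_{2})\,t^{1-a_{1}-\beta p_{2}}=CK\,t^{-\alpha}.
\]

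\textbf{Boundary $\mathcal{M}_{q_{2},\mu}$ term.} As in (\ref{aux-est-I1-2}), we use (\ref{eq:P_kernel_est2}) with $x_{n}=y_{n}=0$ and exponent $a_{2}=\theta(n-1)$ equal to
\[
\gamma=\frac{n-1-\mu}{l}-\frac{n-1-\mu}{q_{2}}=1-\beta(p_{2}-1)\in(0,1).
\]
The time integral is then $t^{1-a_{2}-\beta p_{2}}=t^{-\beta}$.

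\textbf{Interior $\mathcal{M}_{q_{0},\mu}$ term.} Here $\frac{n-1-\mu}{q_{0}}=\frac{1}{p_{2}-1}-\frac{1}{q_{0}}$, so we get $\gamma=1-\beta p_{2}+\frac{1}{q_{0}}$ and the power of $(t-s)$ is $a_{0}=1-\beta p_{2}\in(0,1)$. This yields a bound $CK\,t^{0}$. In all three cases the Beta integrals converge precisely because each $a_{i}<1$ and $\beta p_{2}<1$, and taking suprema over $t>0$ gives (\ref{lineal_2}).

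The main point to check is the exponent bookkeeping. We need each $\theta$ to lie in $[0,1)$, each Riesz order $\gamma$ to lie in $(0,n-1)$, and the target indices to exceed $l$, which is what makes $\gamma>0$ and $l>1$ admissible in (\ref{xiao_liu_n}). All of this follows from the displayed values, since each $\gamma<2\leq n-1$ for $n\geq3$. A secondary point is to justify the Minkowski and Fubini interchange, taking the $L^{q}$ norm over $\Omega_{r}(x_{0})$ inside the $s$-integral before the Morrey supremum. This is legitimate because we first bound by $|g|$ and then take the supremum over balls afterwards.
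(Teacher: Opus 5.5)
Your proposal is correct and follows essentially the paper's proof. It uses the same split into the $\mathcal{M}_{q_1,\mu}$, boundary $\mathcal{M}_{q_2,\mu}$ and $\mathcal{M}_{q_0,\mu}$ components, the same choices of $\theta$ in the Poisson kernel bounds (time singularities $(t-s)^{-(1-\beta p_2+\alpha)}$, $(t-s)^{-(1-\beta(p_2-1))}$, $(t-s)^{-(1-\beta p_2)}$), the same Riesz-potential estimate in $\mathbb{R}^{n-1}$ with identical orders $\gamma$, and the same Beta integrals after $s=tr$; your exponent bookkeeping via $p_1=2p_2-1$ checks out and makes explicit what the paper leaves implicit.
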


\textbf{Proof.} We begin by treating the component with $\mathcal{M}%
_{q_{1},\mu}(\mathbb{R}_{+}^{n})$ in the $\mathcal{X}$-norm. To this end, we
employ (\ref{eq:P_kernel_est1}) with $\theta\left(  n-1-\frac{1}{q_{1}%
}\right)  =\alpha-\beta p_{2}+1$ and proceed as in
(\ref{eq:est_lineales_1.1_1}) to obtain
\begin{align}
&  t^{\alpha}\left\Vert I_{2}[g](\cdot,t)\right\Vert _{\mathcal{M}_{q_{1},\mu
}(\mathbb{R}_{+}^{n})}\nonumber\\
&  \leq t^{\alpha}\left\Vert \int_{0}^{t}\int_{\mathbb{R}^{n-1}}\left\Vert
P(x^{\prime}-y^{\prime},x_{n}+t-s)\right\Vert _{L^{q_{1}}((0,\infty),dx_{n}%
)}\left\vert g(y^{\prime},s)\right\vert dy^{\prime}ds\right\Vert
_{\mathcal{M}_{q_{1},\mu}(\mathbb{R}^{n-1})}\nonumber\\
&  \leq\tilde{c}_{n}t^{\alpha}\int_{0}^{t}\frac{s^{-\beta p_{2}}%
}{(t-s)^{\theta\left(  n-1-\frac{1}{q_{1}}\right)  }}s^{\beta p_{2}}\left\Vert
\int_{\mathbb{R}^{n-1}}\frac{\left\vert g(y^{\prime},s)\right\vert
}{|x^{\prime}-y^{\prime}|^{n-1-\gamma_{2,1}}}dy^{\prime}\right\Vert
_{\mathcal{M}_{q_{1},\mu}(\mathbb{R}^{n-1})}ds\nonumber\\
&  =\tilde{c}_{n}t^{\alpha}\int_{0}^{t}\frac{s^{-\beta p_{2}}}{(t-s)^{\alpha
-\beta p_{2}+1}}s^{\beta p_{2}}\left\Vert \int_{\mathbb{R}^{n-1}}%
\frac{\left\vert g(y^{\prime},s)\right\vert }{|x^{\prime}-y^{\prime
}|^{n-1-\gamma_{2,1}}}dy^{\prime}\right\Vert _{\mathcal{M}_{q_{1},\mu
}(\mathbb{R}^{n-1})}ds, \label{eq:est_lineales_2.1_1}%
\end{align}
where
\[
\gamma_{2,1}=\frac{n-1-\mu}{l}-\frac{n-1-\mu}{q_{1}}=\alpha-\beta
p_{2}+1+\frac{1}{q_{1}}\in(0,n-1),
\]
and $l=q_{2}/p_{2}>1$. By applying estimates (\ref{eq:est_lineales_2.1_1}) and
(\ref{xiao_liu_n}), and making a change of variables, we arrive at
\begin{align}
&  t^{\alpha}\left\Vert I_{2}[g](\cdot,t)\right\Vert _{\mathcal{M}_{q_{1},\mu
}(\mathbb{R}_{+}^{n})}\nonumber\\
&  \leq\bar{C}_{2,1}t^{\alpha}\int_{0}^{t}\frac{s^{-\beta p_{2}}%
}{(t-s)^{\alpha-\beta p_{2}+1}}s^{\beta p_{2}}\left\Vert g(\cdot,s)\right\Vert
_{\mathcal{M}_{l,\mu}(\mathbb{R}^{n-1})}ds\label{c_lineares_2_1}\\
&  \leq\bar{C}_{2,1}\int_{0}^{1}(1-r)^{-(\alpha-\beta p_{2}+1)}r^{-\beta
p_{2}}dr\left(  \sup_{t>0}t^{\beta p_{2}}\left\Vert g(\cdot,t)\right\Vert
_{\mathcal{M}_{l,\mu}(\mathbb{R}^{n-1})}\right)  ,\nonumber
\end{align}
where the beta integral is finite due to the conditions $\alpha<\beta p_{2}$
and $\beta p_{2}<1$. Consequently,
\begin{equation}
\sup_{t>0}t^{\alpha}\left\Vert I_{2}[g](\cdot,t)\right\Vert _{\mathcal{M}%
_{q_{1},\mu}(\mathbb{R}_{+}^{n})}\leq C_{2,1}\left(  \sup_{t>0}t^{\beta p_{2}%
}\left\Vert g(\cdot,t)\right\Vert _{\mathcal{M}_{l,\mu}(\mathbb{R}^{n-1}%
)}\right)  . \label{eq:est_lineales_2.1}%
\end{equation}
Next, we turn to the component with $\mathcal{M}_{q_{2},\mu}(\mathbb{R}%
^{n-1})$ in the $\mathcal{X}$-norm. Taking $\theta$ in (\ref{eq:P_kernel_est2}%
) such that $\theta(n-1)=1-\beta(p_{2}-1)$, it follows that
\begin{align}
&  t^{\beta}\left\Vert I_{2}[g](\cdot,0,t)\right\Vert _{\mathcal{M}_{q_{2}%
,\mu}(\mathbb{R}^{n-1})}\nonumber\\
&  \leq c_{n}t^{\beta}\int_{0}^{t}\frac{s^{-\beta p_{2}}}{(t-s)^{\theta(n-1)}%
}s^{\beta p_{2}}\left\Vert \int_{\mathbb{R}^{n-1}}\frac{\left\vert
g(y^{\prime},s)\right\vert }{|x^{\prime}-y^{\prime}|^{n-1-\gamma_{2,2}}%
}dy^{\prime}\right\Vert _{\mathcal{M}_{q_{2},\mu}(\mathbb{R}^{n-1}%
)}ds\nonumber\\
&  \leq\bar{C}_{2,2}t^{\beta}\int_{0}^{t}\frac{s^{-\beta p_{2}}}%
{(t-s)^{1-\beta(p_{2}-1)}}s^{\beta p_{2}}\left\Vert g(\cdot,s)\right\Vert
_{\mathcal{M}_{l,\mu}(\mathbb{R}^{n-1})}ds. \label{c_lineares_2_2}%
\end{align}
In the last step, we have used estimate (\ref{xiao_liu_n}) with $l=\frac
{q_{2}}{p_{2}}$ and the parameter $\gamma_{2,2}=\frac{n-1-\mu}{l}%
-\frac{n-1-\mu}{q_{2}}=1-\beta(p_{2}-1)\in(0,n-1)$. Now, changing variables
via $s=tr$ and using the conditions $\beta p_{2}<1$ and $\beta(p_{2}-1)>0,$ we
obtain%
\begin{align}
\sup_{t>0}t^{\beta}\left\Vert I_{2}[g](\cdot,0,t)\right\Vert _{\mathcal{M}%
_{q_{2},\mu}(\mathbb{R}^{n-1})}  &  \leq\bar{C}_{2,2}\int_{0}^{1}r^{-\beta
p_{2}}(1-r)^{-(1-\beta(p_{2}-1))}dr\left(  \sup_{t>0}t^{\beta p_{2}}\left\Vert
g(\cdot,t)\right\Vert _{\mathcal{M}_{l,\mu}(\mathbb{R}^{n-1})}\right)
\nonumber\\
&  \leq C_{2,2}\left(  \sup_{t>0}t^{\beta p_{2}}\left\Vert g(\cdot
,t)\right\Vert _{\mathcal{M}_{l,\mu}(\mathbb{R}^{n-1})}\right)  .
\label{eq:est_lineales_2.2}%
\end{align}
Finally, we handle the term of the $\mathcal{X}$-norm involving the space
$\mathcal{M}_{q_{0},\mu}(\mathbb{R}_{+}^{n}).$ Here we apply
(\ref{eq:P_kernel_est1}) with a parameter $\theta$ chosen such that
$\theta\left(  n-1-\frac{1}{q_{0}}\right)  =1-\beta p_{2}$. Recalling that
$l=\frac{q_{2}}{p_{2}}$, this leads to the following estimate%
\begin{align}
&  \left\Vert I_{2}[g](\cdot,t)\right\Vert _{\mathcal{M}_{q_{0} ,\mu
}(\mathbb{R}_{+}^{n})}\nonumber\\
&  \leq\left\Vert \int_{0}^{t}\int_{\mathbb{R}^{n-1}}\left\Vert P(x^{\prime
}-y^{\prime},x_{n}+t-s)\right\Vert _{L^{\frac{(n-\mu)(p_{1}-1)}{2}}%
((0,\infty),dx_{n})}\left\vert g(y^{\prime},s)\right\vert dy^{\prime
}ds\right\Vert _{\mathcal{M}_{q_{0},\mu}(\mathbb{R}^{n-1})}\nonumber\\
&  \leq\tilde{c}_{n}\int_{0}^{t}\frac{s^{-\beta p_{2}}}{(t-s)^{1-\beta p_{2}}%
}s^{\beta p_{2}}\left\Vert \int_{\mathbb{R}^{n-1}}\frac{\left\vert
g(y^{\prime},s)\right\vert }{|x^{\prime}-y^{\prime}|^{n-1-\gamma_{2,3}}%
}dy^{\prime}\right\Vert _{\mathcal{M}_{q_{0},\mu}(\mathbb{R}^{n-1}%
)}ds\nonumber\\
&  \leq\bar{C}_{2,3}\int_{0}^{t}\frac{s^{-\beta p_{2}}}{(t-s)^{1-\beta p_{2}}%
}s^{\beta p_{2}}\left\Vert g(\cdot,s)\right\Vert _{\mathcal{M}_{l,\mu
}(\mathbb{R}^{n-1})}ds. \label{c_lineares_2_3}%
\end{align}
In the above estimate we have used (\ref{xiao_liu_n}) with%
\[
\gamma_{2,3}=\frac{n-1-\mu}{l}-\frac{n-1-\mu}{q_{0}}=1-\beta p_{2}+\frac
{1}{q_{0}}\in(0,n-1),
\]
which can be verified using the relation $p_{1}=2p_{2}-1$. Making the change
of variables $s=tr$ in (\ref{c_lineares_2_3}) and using the condition $0<\beta
p_{2}<1$ yields%
\begin{align}
\sup_{t>0}\left\Vert I_{2}[g](\cdot,t)\right\Vert _{\mathcal{M}_{q_{0} ,\mu
}(\mathbb{R}_{+}^{n})}  &  \leq\bar{C}_{2,3}\int_{0}^{1}r^{-\beta p_{2}%
}(1-r)^{-(1-\beta p_{2})}dr\left(  \sup_{t>0}t^{\beta p_{2}}\left\Vert
g(\cdot,t)\right\Vert _{\mathcal{M}_{l,\mu}(\mathbb{R}^{n-1})}\right)
\nonumber\\
&  \leq C_{2,3}\left(  \sup_{t>0}t^{\beta p_{2}}\left\Vert g(\cdot
,t)\right\Vert _{\mathcal{M}_{l,\mu}(\mathbb{R}^{n-1})}\right)  .
\label{eq:est_lineales_2.3}%
\end{align}
Combining (\ref{eq:est_lineales_2.1}), (\ref{eq:est_lineales_2.2}), and
(\ref{eq:est_lineales_2.3}), the resulting estimate is (\ref{lineal_2}) with
$C_{2}=C_{2,1}+C_{2,2}+C_{2,3}$.\qed

The next lemma concerns estimates for the operator $I_{3}.$

\begin{lemma}
\label{Lem-I3}Let $p_{1},p_{2}\in(1,\infty)$, $\mu\in\lbrack0,n-1)$, and let
$q_{0},q_{1},q_{2},\alpha,\beta$ be as in (\ref{Cond-H0}). Suppose further
that $l=q_{1}/p_{1}>1,$ $p_{1}=2p_{2}-1$, and $0<\beta<\alpha p_{1}<1$. Then,
there exists a constant $C_{3}>0$ such that%
\begin{equation}
\left\Vert I_{3}[f]\right\Vert _{\mathcal{X}}\leq C_{3}\left(  \sup
_{t>0}t^{\alpha p_{1}}\left\Vert f(\cdot,t)\right\Vert _{\mathcal{M}_{l,\mu
}(\mathbb{R}_{+}^{n})}\right)  \label{lineal_3}%
\end{equation}
for all $t^{\alpha p_{1}}f\in L^{\infty}((0,\infty);\mathcal{M}_{l,\mu
}(\mathbb{R}_{+}^{n})).$
\end{lemma}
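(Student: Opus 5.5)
The plan mirrors Lemmas \ref{Lem-I1} and \ref{Lem-I2}: estimate each of the three components of the $\mathcal{X}$-norm separately. In each case we combine a pointwise bound on $P$ of the form (\ref{eq:P_kernel_est2}), which splits off a power of $(t-s)$, with the Riesz-potential estimates of Lemma 2.2, and finish with a Beta-function integral in time. The new feature compared with $I_{2}$ is that the source $f$ lives in the interior $\mathbb{R}_{+}^{n}$ rather than on the boundary. We therefore first extend $f(\cdot,s)$ to $\mathbb{R}^{n}$, either by the Stein extension (\ref{extension1}) with the bound (\ref{cont-ext1}), or simply by even reflection $y_{n}\mapsto -y_{n}$. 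Then $|x_{n}-y_{n}|$ and the sum structure $x_{n}+y_{n}$ in $P$ are handled in the whole space.

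\textbf{Boundary component.} For the $\mathcal{M}_{q_{2},\mu}(\mathbb{R}^{n-1})$ part we set $x_{n}=0$ and apply (\ref{eq:P_kernel_est2}) with $\theta(n-1)=1+\beta-\alpha p_{1}$. This gives
\[
|I_{3}[f](x',0,t)|\le c_{n}\int_{0}^{t}(t-s)^{-\theta(n-1)}\left(\frac{1}{|y|^{(1-\theta)(n-1)}}\ast|\tilde f(\cdot,s)|\right)(x',0)\,ds .
\]
We use the trace estimate (\ref{xiao_liu_n_1}) with $p_{1}\to l=q_{1}/p_{1}$, $p_{2}\to q_{2}$ and $\gamma=n-(1-\theta)(n-1)$. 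The scaling relation $\gamma=\frac{n-\mu}{l}-\frac{n-1-\mu}{q_{2}}$ should follow from the definitions of $\alpha,\beta$ together with $p_{1}=2p_{2}-1$, which gives $\frac{2}{p_{1}-1}=\frac{1}{p_{2}-1}$. The condition $n-\gamma l<n-1$ also has to be checked. The time integral becomes $t^{\beta}\int_{0}^{t}(t-s)^{-(1+\beta-\alpha p_{1})}s^{-\alpha p_{1}}ds$, which after $s=tr$ is a finite Beta integral because $\beta<\alpha p_{1}<1$.

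\textbf{Interior components.} For the $\mathcal{M}_{q_{1},\mu}(\mathbb{R}_{+}^{n})$ and $\mathcal{M}_{q_{0},\mu}(\mathbb{R}_{+}^{n})$ parts we again use (\ref{eq:P_kernel_est2}), now bounding by $|(x'-y',x_{n}-y_{n})|^{-(1-\theta)(n-1)}$. This is a genuine $n$-dimensional Riesz potential of $\tilde f$ with $\gamma=n-(1-\theta)(n-1)$, so we apply (\ref{xiao_liu_n}) in $\mathbb{R}^{n}$ and restrict back to $\mathbb{R}_{+}^{n}$. The parameter $\theta$ is chosen so that the time power is $(t-s)^{-(1+\alpha-\alpha p_{1})}$ in the first case and $(t-s)^{-(1-\alpha p_{1})}$ in the second. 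The Riesz exponent is then forced by scaling, namely $\gamma=\frac{n-\mu}{l}-\frac{n-\mu}{q_{1}}$, respectively $\frac{n-\mu}{l}-\frac{n-\mu}{q_{0}}$. The resulting Beta integrals are $\int_{0}^{1}(1-r)^{-(1+\alpha-\alpha p_{1})}r^{-\alpha p_{1}}dr$ and $\int_{0}^{1}(1-r)^{-(1-\alpha p_{1})}r^{-\alpha p_{1}}dr$. Both are finite because $\alpha<\alpha p_{1}<1$; note that $p_{1}>1$ gives $\alpha<\alpha p_{1}$. Summing the three bounds gives (\ref{lineal_3}).

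\textbf{Main obstacle.} The expected difficulty is the bookkeeping of exponents. For each component we must confirm that $\theta\in[0,1)$ and $\gamma\in(0,n)$, and that the Riesz exponent required by Lemma 2.2 matches the one produced by (\ref{eq:P_kernel_est2}). The point is that $(1-\theta)(n-1)$ is tied to $n-\gamma$, and there is one dimension of slack. It may happen that (\ref{eq:P_kernel_est2}) with the exponent $n-1$ is not flexible enough to match the required Riesz exponent. In that case we would first try the sharper bound $|P(z,x_{n}+y_{n}+t)|\lesssim (t^{2}+|(z,x_{n}-y_{n})|^{2})^{-(n-1)/2}$ interpolated differently, or keep the full decay $(t+|\cdot|)^{-n}$ using $x_{n}+t\le |(z,x_{n}+y_{n}+t)|$. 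Either choice gives an extra power that restores the balance.
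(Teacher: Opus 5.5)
Your proposal is correct and follows the paper's proof essentially step by step. It uses the same choices $\theta(n-1)=1-\alpha(p_1-1)$, $1+\beta-\alpha p_1$ and $1-\alpha p_1$ in (\ref{eq:P_kernel_est2}), extends $f$ to $\mathbb{R}^n$, applies (\ref{xiao_liu_n}) to the interior components and (\ref{xiao_liu_n_1}) to the trace, and ends with the same Beta integrals. The worry in your last paragraph does not materialize: $n-(1-\theta)(n-1)=1+\theta(n-1)$ equals the scaling exponent in each case, namely $2-\alpha(p_1-1)$, $2+\beta-\alpha p_1$ and $2-\alpha p_1$, so no fallback kernel bound is needed.
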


\textbf{Proof.} We consider (\ref{eq:P_kernel_est2}) by choosing $\theta$ so
that $\theta(n-1)=1-\alpha(p_{1}-1)$, use the extension (\ref{extension1}) of
$f$, and then employ (\ref{xiao_liu_n}) to estimate $I_{3}$ as follows%
\begin{align}
t^{\alpha}\left\Vert I_{3}[f](\cdot,t)\right\Vert _{\mathcal{M}_{q_{1},\mu
}(\mathbb{R}_{+}^{n})}  &  \leq c_{n}t^{\alpha}\int_{0}^{t}\frac{s^{-\alpha
p_{1}}}{(t-s)^{\theta(n-1)}}s^{\alpha p_{1}}\left\Vert \int_{\mathbb{R}%
_{+}^{n}}\frac{\left\vert f(y,s)\right\vert }{|x-y|^{n-\gamma_{3,1}}%
}dy\right\Vert _{\mathcal{M}_{q_{1},\mu}(\mathbb{R}_{+}^{n})}ds\nonumber\\
&  \leq c_{n}t^{\alpha}\int_{0}^{t}\frac{s^{-\alpha p_{1}}}{(t-s)^{1-\alpha
(p_{1}-1)}}s^{\alpha p_{1}}\left\Vert \int_{\mathbb{R}^{n}}\frac{\left\vert
\tilde{f}(y,s)\right\vert }{|x-y|^{n-\gamma_{3,1}}}dy\right\Vert
_{\mathcal{M}_{q_{1},\mu}(\mathbb{R}^{n})}ds\nonumber\\
&  \leq C\int_{0}^{1}(1-r)^{-(1-\alpha(p_{1}-1))}r^{-\alpha p_{1}}dr\left(
\sup_{t>0}t^{\alpha p_{1}}\left\Vert \tilde{f}(\cdot,t)\right\Vert
_{\mathcal{M}_{l,\mu}(\mathbb{R}^{n})}\right) \label{c_lineares_3_1}\\
&  \leq C\left(  \sup_{t>0}t^{\alpha p_{1}}\left\Vert \tilde{f}(\cdot
,t)\right\Vert _{\mathcal{M}_{l,\mu}(\mathbb{R}^{n})}\right)  ,\nonumber
\end{align}
where above we have that $\gamma_{3,1}=\frac{n-\mu}{l}-\frac{n-\mu}{q_{1}%
}=2-\alpha(p_{1}-1)\in(0,n)$ and used the change of variables $s=tr$ and
conditions $\alpha p_{1}<1$ and $\alpha(p_{1}-1)>0$. Recalling the estimate
for extension (\ref{cont-ext1}) implies that
\begin{equation}
\sup_{t>0}t^{\alpha}\left\Vert I_{3}[f](\cdot,t)\right\Vert _{\mathcal{M}%
_{q_{1},\mu}(\mathbb{R}_{+}^{n})}\leq C_{3,1}\left(  \sup_{t>0}t^{\alpha
p_{1}}\left\Vert f(\cdot,t)\right\Vert _{\mathcal{M}_{l,\mu}(\mathbb{R}%
_{+}^{n})}\right)  . \label{eq:est_lineales_3.1}%
\end{equation}

In the sequel, we focus on the part of the $\mathcal{X}$-norm involving
$\mathcal{M}_{q_{2},\mu}(\mathbb{R}^{n-1})$. To this end, we choose $\theta$
in (\ref{eq:P_kernel_est2}) satisfying $\theta(n-1)=\beta-\alpha p_{1}+1$, and
proceed as follows%
\begin{align}
&  \sup_{t>0}t^{\beta}\left\Vert I_{3}[f](\cdot,0,t)\right\Vert _{\mathcal{M}%
_{q_{2},\mu}(\mathbb{R}^{n-1})}\nonumber\\
&  \leq c_{n}\sup_{t>0}t^{\beta}\int_{0}^{t}\frac{s^{-\alpha p_{1}}%
}{(t-s)^{\theta(n-1)}}s^{\alpha p_{1}}\left\Vert \int_{\mathbb{R}_{+}^{n}%
}\frac{\left\vert f(y,s)\right\vert }{|(x^{\prime},0)-y|^{n-\gamma_{3,2}}%
}dy\right\Vert _{\mathcal{M}_{q_{2},\mu}(\mathbb{R}^{n-1})}ds\nonumber\\
&  \leq c_{n}\sup_{t>0}t^{\beta}\int_{0}^{t}\frac{s^{-\alpha p_{1}}%
}{(t-s)^{1-\alpha p_{1}+\beta}}\left\Vert \int_{\mathbb{R}^{n}}\frac
{\left\vert \tilde{f}(y,s)\right\vert }{|(x^{\prime},0)-y|^{n-\gamma_{3,2}}%
}dy\right\Vert _{\mathcal{M}_{q_{2},\mu}(\mathbb{R}^{n-1})}ds\nonumber\\
&  \leq C\int_{0}^{1}r^{-\alpha p_{1}}(1-r)^{-(1-\alpha p_{1}+\beta)}dr\left(
\sup_{t>0}t^{\alpha p_{1}}\left\Vert \tilde{f}(\cdot,t)\right\Vert
_{\mathcal{M}_{l,\mu}(\mathbb{R}^{n})}\right) \nonumber\\
&  \leq C_{3,2}\left(  \sup_{t>0}t^{\alpha p_{1}}\left\Vert f(\cdot
,t)\right\Vert _{\mathcal{M}_{l,\mu}(\mathbb{R}_{+}^{n})}\right)  ,
\label{eq:est_lineales_3.2}%
\end{align}
where $\gamma_{3,2}=\frac{n-\mu}{l}-\frac{n-1-\mu}{q_{2}}=2-\alpha p_{1}%
+\beta\in(0,n-1)$, since $\beta<\alpha p_{1}$. In the above estimates, we have
used (\ref{xiao_liu_n_1}), the conditions $\beta<\alpha p_{1}$ and $\alpha
p_{1}<1$ (ensuring convergence of the Beta integral), and finally
(\ref{cont-ext1}).

Now we treat the component with $\mathcal{M}_{q_{0},\mu}(\mathbb{R}^{n-1})$ in
the $\mathcal{X}$-norm. Taking $\theta$ in (\ref{eq:P_kernel_est2}) so that
$\theta(n-1)=1-\alpha p_{1}$, we can estimate
\begin{align}
&  \left\Vert I_{3}[f](\cdot,t)\right\Vert _{\mathcal{M}_{q_{0} ,\mu
}(\mathbb{R}_{+}^{n})}\nonumber\\
&  \leq c_{n}\int_{0}^{t}\frac{s^{-\alpha p_{1}}}{(t-s)^{\theta\left(
n-1\right)  }}s^{\alpha p_{1}}\left\Vert \int_{\mathbb{R}^{n}}\frac{\tilde
{f}(y,s)}{|x-y|^{n-\gamma_{3,3}}}dy\right\Vert _{\mathcal{M}_{q_{0} ,\mu
}(\mathbb{R}^{n})}ds\nonumber\\
&  \leq C\int_{0}^{t}\frac{s^{-\alpha p_{1}}}{(t-s)^{1-\alpha p_{1}}}s^{\alpha
p_{1}}\left\Vert \tilde{f}(\cdot,s)\right\Vert _{\mathcal{M}_{l,\mu
}(\mathbb{R}^{n})}ds \label{c_lineares_3_3}%
\end{align}
where in the last step we have used (\ref{xiao_liu_n}) with $\gamma
_{3,3}=\frac{n-\mu}{l}-\frac{n-\mu}{q_{0}}=2-\alpha p_{1}\in(0,n)$. Making the
change $s=tr$ and using the conditions $\alpha p_{1}>0$ and $\alpha p_{1}<1$
in (\ref{c_lineares_3_3}), we arrive at
\begin{align}
\sup_{t>0}\left\Vert I_{3}[f](\cdot,t)\right\Vert _{\mathcal{M}_{q_{0} ,\mu
}(\mathbb{R}_{+}^{n})}  &  \leq C\int_{0}^{1}r^{-\alpha p_{1}}%
(1-r)^{-(1-\alpha p_{1})}dr\left(  \sup_{t>0}t^{\alpha p_{1}}\left\Vert
\tilde{f}(\cdot,t)\right\Vert _{\mathcal{M}_{l,\mu}(\mathbb{R}^{n})}\right)
\nonumber\\
&  \leq C_{3,3}\left(  \sup_{t>0}t^{\alpha p_{1}}\left\Vert f(\cdot
,t)\right\Vert _{\mathcal{M}_{l,\mu}(\mathbb{R}_{+}^{n})}\right)  .
\label{eq:est_lineales_3.3}%
\end{align}
Considering $C_{3}=C_{3,1}+C_{3,2}+C_{3,3}$ and putting together estimates
(\ref{eq:est_lineales_3.1}), (\ref{eq:est_lineales_3.2}) and
(\ref{eq:est_lineales_3.3}), we conclude (\ref{lineal_3}).\qed

In what follows, we proceed to derive the estimates associated with $I_{4}$.

\begin{lemma}
\label{Lem-I4}Let $p_{1},p_{2}\in(1,\infty)$, $\mu\in\lbrack0,n-1)$, and let
$q_{0},q_{1},q_{2},\alpha,\beta$ be as in (\ref{Cond-H0}). In addition, assume
that $l_{1}=\frac{(n-\mu)q_{1}}{n-\mu+2q_{1}}>1$, $l_{2}=q_{0}/p_{1}>1$, and
$\alpha,\beta>0$. Then, there exists a constant $C_{4}>0$ such that
\begin{equation}
\left\Vert I_{4}[f]\right\Vert _{\mathcal{X}}\leq C_{4}\left(  \sup
_{t>0}t^{\alpha}\left\Vert f(\cdot,t)\right\Vert _{\mathcal{M}_{l_{1},\mu
}(\mathbb{R}_{+}^{n})}+\sup_{t>0}\left\Vert f(\cdot,t)\right\Vert
_{\mathcal{M}_{l_{2},\mu}(\mathbb{R}_{+}^{n})}\right)  \label{lineal_4}%
\end{equation}
for all $f\in L^{\infty}((0,\infty);\mathcal{M}_{l_{2},\mu}(\mathbb{R}_{+}%
^{n}))$ with $t^{\alpha}f\in L^{\infty}((0,\infty);\mathcal{M}_{l_{1},\mu
}(\mathbb{R}_{+}^{n})).$
\end{lemma}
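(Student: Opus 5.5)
The plan is to estimate the three components of the $\mathcal{X}$-norm in (\ref{norm1}) separately. For $I_{4}$ there is no time integration, so each component only needs a fixed-time Riesz-potential bound of order $\gamma=2$, after which we multiply by the appropriate time weight and take the supremum in $t$. The starting point is the pointwise Green function bound (\ref{eq:Green_est}):
\[
|I_{4}[f](x,t)|\leq\bar{c}_{n}\int_{\mathbb{R}_{+}^{n}}\frac{|f(y,t)|}{|x-y|^{n-2}}dy\leq\bar{c}_{n}\int_{\mathbb{R}^{n}}\frac{|\tilde{f}(y,t)|}{|x-y|^{n-2}}dy,\qquad x\in\mathbb{R}_{+}^{n}.
\]
Here $\tilde{f}$ is the extension (\ref{extension1}), or simply the extension by zero, which already satisfies (\ref{cont-ext1}) with $C=1$. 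Restricting from $\mathbb{R}^{n}$ to $\mathbb{R}_{+}^{n}$ does not increase the Morrey norm, so it suffices to bound the whole-space potential $|x|^{-(n-2)}\ast|\tilde{f}(\cdot,t)|$.

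For the $\mathcal{M}_{q_{1},\mu}$ component, apply (\ref{xiao_liu_n}) with $\gamma=2$, $p_{1}\to l_{1}$ and $p_{2}\to q_{1}$. The definition $l_{1}=\frac{(n-\mu)q_{1}}{n-\mu+2q_{1}}$ is exactly the relation $\frac{n-\mu}{l_{1}}-\frac{n-\mu}{q_{1}}=2$. This gives
\[
t^{\alpha}\Vert I_{4}[f](\cdot,t)\Vert_{\mathcal{M}_{q_{1},\mu}(\mathbb{R}_{+}^{n})}\leq Ct^{\alpha}\Vert f(\cdot,t)\Vert_{\mathcal{M}_{l_{1},\mu}(\mathbb{R}_{+}^{n})}.
\]
For the $\mathcal{M}_{q_{0},\mu}$ component, use $l_{2}=q_{0}/p_{1}$ and $q_{0}=(n-\mu)(p_{1}-1)/2$. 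Then
\[
\frac{n-\mu}{l_{2}}-\frac{n-\mu}{q_{0}}=\frac{(n-\mu)(p_{1}-1)}{q_{0}}=2,
\]
so (\ref{xiao_liu_n}) again gives $\Vert I_{4}[f](\cdot,t)\Vert_{\mathcal{M}_{q_{0},\mu}}\leq C\Vert f(\cdot,t)\Vert_{\mathcal{M}_{l_{2},\mu}}$. Taking suprema over $t>0$ yields these two parts of (\ref{lineal_4}).

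For the boundary component, the key observation is that $G(x,y)=0$ whenever $x_{n}=0$, because then $|x-y|=|x-\tilde{y}|$. Hence the trace $I_{4}[f](x^{\prime},0,t)$ vanishes identically, and the term $t^{\beta}\Vert I_{4}[f](\cdot,0,t)\Vert_{\mathcal{M}_{q_{2},\mu}(\mathbb{R}^{n-1})}$ contributes nothing. The trace should be understood as the limit $x_{n}\to0^{+}$, which is justified by the same potential bounds together with dominated convergence. If one prefers not to rely on this cancellation, a fallback is the trace estimate (\ref{xiao_liu_n_1}). That route would require matching the exponent $\frac{n-\mu}{l}-\frac{n-1-\mu}{q_{2}}=2$ with one of the available norms of $f$, and the time weights do not obviously match, so the cancellation argument is the preferred route.

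The main obstacle is checking the hypotheses of (\ref{xiao_liu_n}): we need $0<\gamma=2<n$, $l_{1},l_{2}>1$, and target exponents $q_{1},q_{0}<\infty$. Implicitly we also need $2<n-\mu$, i.e. $\mu<n-2$, so that the Riesz potential of order $2$ acts boundedly between the indicated Morrey spaces. This is assumed in Theorem \ref{theo}, and otherwise $l_{1}>1$ forces it anyway. A second point is the a.e. convergence of the integral defining $I_{4}[f]$; this follows from the finiteness of the Riesz potential bound. Finally, Bochner measurability in $t$ is inherited from that of $f$. The constant is $C_{4}$, the sum of the two constants from (\ref{xiao_liu_n}) times $\bar{c}_{n}$.
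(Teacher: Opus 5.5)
Your proposal is correct and follows the paper's proof essentially step by step. You use the same three ingredients: the Green bound (\ref{eq:Green_est}) with an extension to $\mathbb{R}^{n}$; the order-$2$ Riesz potential estimate (\ref{xiao_liu_n}) with the exponent relations $\frac{n-\mu}{l_{1}}-\frac{n-\mu}{q_{1}}=2=\frac{n-\mu}{l_{2}}-\frac{n-\mu}{q_{0}}$; and the vanishing of the boundary component because $G(x^{\prime},0,y)=0$. Your use of the zero extension in place of (\ref{extension1}), and your remark that $l_{1}>1$ already forces $\mu<n-2$, are harmless refinements of the same argument.
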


\textbf{Proof.} By (\ref{eq:Green_est}) and (\ref{extension1}), we have
\begin{align}
t^{\alpha}\left\Vert I_{4}[f](\cdot,t)\right\Vert _{\mathcal{M}_{q_{1},\mu
}(\mathbb{R}_{+}^{n})}  &  \leq\bar{c}_{n}t^{\alpha}\left\Vert \int
_{\mathbb{R}_{+}^{n}}\frac{\left\vert f(y,t)\right\vert }{|x-y|^{n-2}%
}dy\right\Vert _{\mathcal{M}_{q_{1},\mu}(\mathbb{R}_{+}^{n})}\nonumber\\
&  \leq\bar{c}_{n}t^{\alpha}\left\Vert \int_{\mathbb{R}^{n}}\frac{\left\vert
\tilde{f}(y,t)\right\vert }{|x-y|^{n-2}}dy\right\Vert _{\mathcal{M}_{q_{1}%
,\mu}(\mathbb{R}^{n})}. \label{c_lineares_4_1}%
\end{align}
Observing that $2=\frac{n-\mu}{l_{1}}-\frac{n-\mu}{q_{1}}$, and invoking
(\ref{xiao_liu_n}) followed by (\ref{cont-ext1}), it follows from
(\ref{c_lineares_4_1}) that
\begin{align}
\sup_{t>0}t^{\alpha}\left\Vert I_{4}[f]\right\Vert _{\mathcal{M}_{q_{1},\mu
}(\mathbb{R}_{+}^{n})}  &  \leq C\sup_{t>0}t^{\alpha}\left\Vert \tilde
{f}(\cdot,t)\right\Vert _{\mathcal{M}_{l_{1},\mu}(\mathbb{R}^{n})}\nonumber\\
&  \leq C_{4,1}\left(  \sup_{t>0}t^{\alpha}\left\Vert f(\cdot,t)\right\Vert
_{\mathcal{M}_{l_{1},\mu}(\mathbb{R}_{+}^{n})}\right)  .
\label{eq:est_lineales_4.1}%
\end{align}
In turn, noting that $G(x^{\prime},0,y)=0$ for all $y\in\mathbb{R}_{+}^{n}$,
we arrive at%
\[
I_{4}[f](x^{\prime},0,t)=\int_{\mathbb{R}_{+}^{n}}G(x^{\prime},0,y)\left[
f(y,t)\right]  dy=0.
\]
Consequently,
\begin{equation}
\sup_{t>0}t^{\beta}\left\Vert I_{4}[f](\cdot,0,t)\right\Vert _{\mathcal{M}%
_{q_{2},\mu}(\mathbb{R}^{n-1})}=0. \label{eq:est_lineales_4.2}%
\end{equation}
Finally, for the term involving the space $\mathcal{M}_{q_{0} ,\mu}%
(\mathbb{R}_{+}^{n})$, we employ (\ref{eq:Green_est}) together with
(\ref{xiao_liu_n}) to obtain
\begin{align}
\sup_{t>0}\left\Vert I_{4}[f](\cdot,t)\right\Vert _{\mathcal{M}_{q_{0} ,\mu
}(\mathbb{R}_{+}^{n})}  &  \leq\bar{c}_{n}\sup_{t>0}\left\Vert \int
_{\mathbb{R}^{n}}\frac{\left\vert \tilde{f}(y,t)\right\vert }{|x-y|^{n-2}%
}dy\right\Vert _{\mathcal{M}_{q_{0} ,\mu}(\mathbb{R}^{n})}\nonumber\\
&  \leq C\sup_{t>0}\left\Vert \tilde{f}(\cdot,t)\right\Vert _{\mathcal{M}%
_{l_{2},\mu}(\mathbb{R}^{n})}\nonumber\\
&  \leq C_{4,2}\left(  \sup_{t>0}\left\Vert f(\cdot,t)\right\Vert
_{\mathcal{M}_{l_{2},\mu}(\mathbb{R}_{+}^{n})}\right)  .
\label{eq:est_lineales_4.3}%
\end{align}
Here we have used the relation $\frac{n-\mu}{l_{2}}-\frac{n-\mu}{q_{0}}=2.$

In view of (\ref{eq:est_lineales_4.1}), (\ref{eq:est_lineales_4.2}), and
(\ref{eq:est_lineales_4.3}), we obtain (\ref{lineal_4}) by setting
$C_{4}=C_{4,1}+C_{4,2}$.

\qed

The following lemma concerns the Poisson kernel in the setting of block
spaces. In particular, we study a convergence property showing that this
kernel serves as an approximate identity acting on blocks in these spaces.
Since we were unable to find these results in the existing literature, we
include the lemma below together with its proof for the reader's convenience.

\begin{lemma}
\label{Lem-Poisson-Block}Let $1<q<\infty$, $0\leq\mu<n-1$, and let $A$ be a
$(q^{\prime},\mu)$-block in $\mathbb{R}^{n-1}$. Denoting by $P_{t}(x^{\prime
})=P(x^{\prime},t)$ the Poisson kernel defined in (\ref{P-expression-1}), we
have that
\begin{equation}
\lim_{t\rightarrow0^{+}}\Vert P_{t}\ast A-A\Vert_{\mathcal{H}_{q^{\prime},\mu
}(\mathbb{R}^{n-1})}=0. \label{lem:Poiss_aprox_resultado1}%
\end{equation}

Moreover, let $1<p<q<\infty$ satisfy $\gamma_{1}=\frac{n-1-\mu}{p}%
-\frac{n-1-\mu}{q}<n-1$, and consider $v_{0}\in\mathcal{M}_{p,\mu}%
(\mathbb{R}^{n-1})$ and $\psi\in\mathcal{H}_{q^{\prime},\mu}(\mathbb{R}%
^{n-1})$ with block decomposition $\psi=\sum_{i=1}^{\infty}\lambda_{i}A_{i}$.
Then, for any $N\in\mathbb{N},$ we obtain the estimate
\begin{equation}
|\langle P_{t}\ast v_{0},\psi\rangle|\leq C_{0}t^{-\gamma_{1}}\Vert v_{0}%
\Vert_{\mathcal{M}_{p,\mu}(\mathbb{R}^{n-1})}\sum_{i=N+1}^{\infty}|\lambda
_{i}|+(C_{N}+D_{N}t)\Vert v_{0}\Vert_{\mathcal{M}_{p,\mu}(\mathbb{R}^{n-1})},
\label{lem:Poiss_aprox_resultado3-1}%
\end{equation}
where the constant $C_{0}>0$ is independent of $N$, while $C_{N},D_{N}>0$
depend on $N$.

Furthermore, let $1<p<q<\infty$ such that $\frac{n-\mu}{p}>1$ and
$1<\gamma_{2}=\frac{n-\mu}{p}-\frac{n-1-\mu}{q}<n$, and consider
$v\in\mathcal{M}_{p,\mu}(\mathbb{R}_{+}^{n})$ and $\psi\in\mathcal{H}%
_{q^{\prime},\mu}(\mathbb{R}^{n-1})$ with block decomposition $\psi=\sum
_{i=1}^{\infty}\lambda_{i}A_{i}$. Then, for any $N\in\mathbb{N},$ we have
\begin{equation}
\left\langle \int_{0}^{\infty}P_{y_{n}+t}\ast v(\cdot,y_{n})dy_{n}%
,\psi\right\rangle \leq\tilde{C}_{0}t^{1-\gamma_{2}}\Vert v\Vert
_{\mathcal{M}_{p,\mu}(\mathbb{R}_{+}^{n})}\sum_{i=N+1}^{\infty}|\lambda
_{i}|+(\tilde{C}_{N}+\tilde{D}_{N}t)\Vert v\Vert_{\mathcal{M}_{p,\mu
}(\mathbb{R}_{+}^{n})}, \label{lem:Poiss_aprox_resultado2}%
\end{equation}
where the constant $\tilde{C}_{0}>0$ is independent of $N$, while $\tilde
{C}_{N},\tilde{D}_{N}>0$ depend on $N$.
\end{lemma}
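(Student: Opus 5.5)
For (\ref{lem:Poiss_aprox_resultado1}) I plan to use that a $(q',\mu)$-block $A$, supported in a ball $B_R(x_0)$, lies in $L^{q'}(\mathbb{R}^{n-1})$. Then $P_t\ast A\to A$ in $L^{q'}$ by the approximate-identity property of the Poisson kernel, which follows from (\ref{P_definicion}), positivity, and the decay of $P$. The difficulty is that $P_t\ast A$ is not compactly supported, so I cannot directly rescale it into a single block.

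To handle this, I will decompose $\mathbb{R}^{n-1}$ into the ball $B_{2R}(x_0)$ and the dyadic annuli $D_k=B_{2^{k+1}R}(x_0)\setminus B_{2^kR}(x_0)$, $k\ge1$, and write
\[
P_t\ast A-A=(P_t\ast A-A)\chi_{B_{2R}}+\sum_{k\ge1}(P_t\ast A)\chi_{D_k}.
\]
\begin{itemize}
\item The first piece is a multiple of a block with coefficient $\lesssim R^{\mu/q}\|P_t\ast A-A\|_{L^{q'}}$, which tends to $0$.
\item On $D_k$, using $P_t(z)\le c_n t|z|^{-n}$ and $\|A\|_{L^1}\le C R^{(n-1)/q}\|A\|_{L^{q'}}$, I get the pointwise bound $|P_t\ast A|\lesssim t(2^kR)^{-n}\|A\|_{L^1}$.
\item Hence the block coefficient of $(P_t\ast A)\chi_{D_k}$ is $\lesssim (2^kR)^{\mu/q}(2^kR)^{(n-1)/q'}\, t\,(2^kR)^{-n}\|A\|_{L^1}$, which is $\lesssim t\,2^{-k(1+(n-1-\mu)/q)}C_R$. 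This is summable in $k$ and tends to $0$ as $t\to0$.
\end{itemize}
This proves (\ref{lem:Poiss_aprox_resultado1}).

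For (\ref{lem:Poiss_aprox_resultado3-1}) I split $\psi=\sum_{i\le N}\lambda_iA_i+\sum_{i>N}\lambda_iA_i$.
\begin{itemize}
\item \textbf{Tail.} By duality (\ref{dualidad_M_H}) and the estimate (\ref{eq:P_kernel_est2}) with $x_n=y_n=0$ and $\theta(n-1)=\gamma_1$, followed by (\ref{xiao_liu_n}) from $\mathcal{M}_{p,\mu}$ to $\mathcal{M}_{q,\mu}$, I get $\|P_t\ast v_0\|_{\mathcal{M}_{q,\mu}}\le C_0t^{-\gamma_1}\|v_0\|_{\mathcal{M}_{p,\mu}}$. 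This yields the first term.
\item \textbf{Head.} For the finitely many blocks I use symmetry of $P_t$ to write $\langle P_t\ast v_0,A_i\rangle=\langle v_0,P_t\ast A_i\rangle$. I then bound this by $\langle v_0,A_i\rangle+\langle v_0,P_t\ast A_i-A_i\rangle$.
\item The quantity $|\langle v_0,A_i\rangle|$ is bounded by $C\|v_0\|_{\mathcal{M}_{p,\mu}}$ times a constant depending on $A_i$. Here I view $A_i$ as a multiple of a $(p',\mu)$-block, since it is $L^{q'}\subset L^{p'}$ on a bounded support.
\item The correction term is bounded as in part one by $Ct\|v_0\|$, since the annular estimate above gave an $O(t)$ bound, now measured in $\mathcal{H}_{p',\mu}$.
\item Summing over $i\le N$ gives $C_N+D_Nt$.
\end{itemize}

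For (\ref{lem:Poiss_aprox_resultado2}) I repeat the same splitting.
\begin{itemize}
\item \textbf{Tail.} I use (\ref{eq:P_kernel_est2}) with $x_n=0$ and $\theta(n-1)=\gamma_2-1$ to bound $P(x'-y',y_n+t)\le c_nt^{1-\gamma_2}|(x'-y',-y_n)|^{-(n-\gamma_2)}$. Extending $v$ by (\ref{extension1}) and applying the trace estimate (\ref{xiao_liu_n_1}) with $\gamma=\gamma_2$ then gives $\tilde C_0t^{1-\gamma_2}\|v\|_{\mathcal{M}_{p,\mu}(\mathbb{R}^n_+)}$.
\item \textbf{Head.} I write $\langle \int_0^\infty P_{y_n+t}\ast v\,dy_n,A_i\rangle=\int_{\mathbb{R}^n_+}v(y)(P_{y_n+t}\ast A_i)(y')\,dy$. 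I compare $P_{y_n+t}\ast A_i$ with $P_{y_n}\ast A_i$: the difference is $O(t)$ by the same annular estimates, using $|\partial_s P_s|\lesssim |z|^{-n}$ uniformly. The main term is then bounded by a constant times $\|v\|$.
\end{itemize}

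I expect the main obstacle to be this last head term. The function $y\mapsto (P_{y_n}\ast A_i)(y')$ must be shown to lie in the predual $\mathcal{H}_{p',\mu}(\mathbb{R}^n_+)$, with norm finite and uniform in $t$. I would try to prove this by decomposing into dyadic shells around $(x_0,0)$ and using the decay $|y|^{-(n-1)}$; the condition $(n-\mu)/p>1$ should be exactly what makes the resulting series summable.
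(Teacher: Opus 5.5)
\textbf{Where the argument breaks.} Your proof of (\ref{lem:Poiss_aprox_resultado1}) matches the paper's (same dyadic splitting, $O(t)$ annular bound). So do your tail estimates: duality plus (\ref{eq:P_kernel_est2}) and (\ref{xiao_liu_n}), resp.\ (\ref{xiao_liu_n_1}). The gap is in both head terms.

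Since $p<q$ you have $p'>q'$, so on a bounded set $L^{p'}\subset L^{q'}$, not the reverse. A $(q',\mu)$-block $A_i$ is therefore in general \emph{not} in $L^{p'}$, hence not a multiple of a $(p',\mu)$-block. In fact $\langle v_0,A_i\rangle$ can diverge: take $\mu=0$, $v_0=|x'|^{-a}\chi_{B_1}$, $A_i=c|x'|^{-b}\chi_{B_1}$ with $ap<n-1$ and $bq'<n-1\le a+b$, which is possible exactly because $1/p>1/q$. For the same reason $P_t\ast A_i-A_i\notin\mathcal{H}_{p',\mu}$, since its inner-ball piece is not in $L^{p'}$. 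The natural substitute bound, $\|P_t\ast A_i\|_{L^{p'}}\le\|P_t\|_{L^a}\|A_i\|_{L^{q'}}$ with $1+1/p'=1/a+1/q'$, blows up like $t^{-(n-1)(1/p-1/q)}$ as $t\to0^+$. So no bound of the form $C_N+D_Nt$ comes out of your head argument.

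The head of (\ref{lem:Poiss_aprox_resultado2}) fails the same way. Near the boundary, the Poisson extension of an $L^{q'}$ function is in general in $L^r$ only for $r\le nq'/(n-1)$. But $\gamma_2>1$ forces $n/p-(n-1)/q>1$, i.e.\ $p'>nq'/(n-1)$. Hence $y\mapsto(P_{y_n}\ast A_i)(y')$ need not lie in $\mathcal{H}_{p',\mu}(\mathbb{R}^n_+)$, nor even in $L^{p'}_{\mathrm{loc}}$ up to the boundary. Also, your $O(t)$ comparison of $P_{y_n+t}\ast A_i$ with $P_{y_n}\ast A_i$ fails near $\mathrm{supp}\,A_i$, because $|z|^{-n}$ is not locally integrable on $\mathbb{R}^{n-1}$.

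\textbf{The missing idea (what the paper does).} Regularize the finitely many head blocks: write $\psi_N=(\psi_N-\rho_\varepsilon\ast\psi_N)+\rho_\varepsilon\ast\psi_N$ with a mollifier $\rho_\varepsilon$.

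The first piece is treated like the tail:
$|\langle P_t\ast v_0,\psi_N-\rho_\varepsilon\ast\psi_N\rangle|\le Ct^{-\gamma_1}\|v_0\|_{\mathcal{M}_{p,\mu}}\|\rho_\varepsilon\ast\psi_N-\psi_N\|_{\mathcal{H}_{q',\mu}}$.
Choose $\varepsilon=\varepsilon(N)$ so that the last norm is at most $\sum_{i>N}|\lambda_i|$; your part-one argument works verbatim with $\rho_\varepsilon$ in place of $P_t$. This keeps $C_0$ independent of $N$.

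The second piece consists of bounded, compactly supported functions. On the inner ball, Young's inequality gives $\|P_t\ast(\rho_\varepsilon\ast A_i)\|_{L^{p'}}\le\|\rho_\varepsilon\|_{L^a}\|A_i\|_{L^{q'}}$, where $a>1$ because $p<q$. Combined with your annular decay, this yields $\|P_t\ast(\rho_\varepsilon\ast A_i)\|_{\mathcal{H}_{p',\mu}}\le C_{1,i}+C_{2,i}t$, uniformly as $t\to0^+$.

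In (\ref{lem:Poiss_aprox_resultado2}) the same substitution works. The function $(P_{y_n+t}\ast\rho_\varepsilon\ast A_i)(y')$ is bounded by $\|\rho_\varepsilon\ast A_i\|_{L^\infty}$ and decays like $|y|^{-(n-1)}$ away from the support. Your dyadic-shell computation then gives coefficients of order $(2^kR)^{1-(n-\mu)/p}$. This is exactly where $(n-\mu)/p>1$ enters, as you anticipated.
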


\textbf{Proof.} Let $\rho_{\varepsilon}\in C_{c}^{\infty}(\mathbb{R}^{n-1})$
be a mollifier, where $\varepsilon>0$ will be chosen later. We begin by
deriving a suitable estimate for $\Vert P_{t}\ast(\rho_{\varepsilon}\ast
A_{i})\Vert_{\mathcal{H}_{p^{\prime},\mu}}$ with $p\leq q$. Note that the
regularization provides bounded $L^{a}$-norms independently of $t$. If $A_{i}$
is a $(q^{\prime},\mu)$-block supported in $B(x_{i}^{\prime},R_{i}%
)\subset\mathbb{R}^{n-1}$, then%
\begin{equation}
\Vert A_{i}\Vert_{L^{q^{\prime}}(\mathbb{R}^{n-1})}\leq R_{i}^{-\mu/q}.
\label{lem:Poiss_aprox_1}%
\end{equation}
We decompose $\mathbb{R}^{n-1}$ into the regions $B_{1,i}=B(x_{i}^{\prime
},2R_{i})$ and $B_{k,i}=B(x_{i}^{\prime},2^{k}R_{i})\backslash B(x_{i}%
^{\prime},2^{k-1}R_{i})$, for $k\geq2$, and consider the functions
\[
a_{k,i}:=\alpha_{k,i}^{-1}\chi_{B_{k,i}}\cdot(P_{t}\ast(\rho_{\varepsilon}\ast
A_{i})),
\]
where $\alpha_{k,i}$ are coefficients to be chosen appropriately.

For $k=1$, we can choose $a\geq1$ such that $1+\frac{1}{p^{\prime}}=\frac
{1}{a}+\frac{1}{q^{\prime}}$. Using Young's inequality together with
(\ref{lem:Poiss_aprox_1}), we obtain
\begin{align}
\Vert a_{1,i}\Vert_{L^{p^{\prime}}}  &  \leq\alpha_{1,i}^{-1}\Vert P_{t}%
\ast(\rho_{\varepsilon}\ast A_{i})\Vert_{L^{p^{\prime}}(B_{1,i})}\nonumber\\
&  \leq\alpha_{1,i}^{-1}\Vert\rho_{\varepsilon}\Vert_{L^{a}}\Vert A_{i}%
\Vert_{L^{q^{\prime}}}\nonumber\\
&  \leq\alpha_{1,i}^{-1}2^{\mu/p}R_{i}^{\mu\left(  \frac{1}{p}-\frac{1}%
{q}\right)  }\Vert\rho_{\varepsilon}\Vert_{L^{a}}(2R_{i})^{-\mu/p}.
\label{lem:Poiss_cuenta_1}%
\end{align}
As $\Vert\rho_{\varepsilon}\Vert_{L^{a}}<\infty$, we can choose $\alpha
_{1,i}=2^{\mu/p}R_{i}^{\mu\left(  \frac{1}{p}-\frac{1}{q}\right)  }\Vert
\rho_{\varepsilon}\Vert_{L^{a}}$ so that $\Vert a_{1,i}\Vert_{L^{p^{\prime}}%
}\leq(2R_{i})^{-\mu/p}$.

Next let $k\geq2$. For each $y^{\prime}\in B_{k,i}$ and $x^{\prime}%
\in\mathrm{supp}(\rho_{\varepsilon}\ast A_{i})$, with $\varepsilon$ chosen
sufficiently small (e.g., $\varepsilon<R_{i}/2$), we have $|x^{\prime
}-y^{\prime}|\geq2^{k-2}R_{i}/2$ for all $k\geq2$. Using this estimate
together with Young's and H\"{o}lder's inequalities, and
(\ref{lem:Poiss_aprox_1}), we arrive at
\begin{align*}
|(P_{t}\ast(\rho_{\varepsilon}\ast A_{i}))(y^{\prime})|  &  \leq2^{n}%
c_{n}\frac{t}{(2^{k-2}R_{i})^{n}}\Vert\rho_{\varepsilon}\Vert_{L^{1}}\Vert
A_{i}\Vert_{L^{q^{\prime}}}|B(x_{i},R_{i})|^{1/q}\\
&  \leq C\frac{t}{2^{k}R_{i}}2^{-k(n-1)}R_{i}^{-(n-1)/q^{\prime}-\mu/q}%
,\quad\forall y^{\prime}\in B_{k,i},
\end{align*}
and then%
\begin{align*}
\Vert a_{k,i}\Vert_{L^{p^{\prime}}}  &  \leq\alpha_{k,i}^{-1}\sup_{y^{\prime
}\in B_{k,i}}|(P_{t}\ast(\rho_{\varepsilon}\ast A_{i}))(y^{\prime}%
)|\cdot|B_{k,i}|^{1/p^{\prime}}\\
&  \leq\alpha_{k,i}^{-1}C\frac{t}{2^{k}R_{i}}2^{-k\left(  \frac{n-1-\mu}%
{p}\right)  }R_{i}^{-(n-1-\mu)\left(  \frac{1}{p}-\frac{1}{q}\right)  }%
(2^{k}R_{i})^{-\mu/p},\quad\forall y^{\prime}\in B_{k,i}.
\end{align*}
Taking
\[
\alpha_{k,i}=C\frac{t}{R_{i}}\left(  2^{1+\frac{n-1-\mu}{p}}\right)
^{-k}R_{i}^{-(n-1-\mu)\left(  \frac{1}{p}-\frac{1}{q}\right)  }\text{, for
}k\geq2,
\]
it follows that
\[
\Vert a_{k,i}\Vert_{L^{p^{\prime}}}\leq(2^{k}R_{i})^{-\mu/p}.
\]
Thus, by the definition of block space $\mathcal{H}_{p^{\prime},\mu}$ (see
(\ref{def_blocks_space})), we conclude that
\begin{equation}
\Vert P_{t}\ast(\rho_{\varepsilon}\ast A_{i})\Vert_{\mathcal{H}_{p^{\prime
},\mu}(\mathbb{R}^{n-1})}\leq\sum_{k=1}^{\infty}\alpha_{k,i}=2^{\mu/p}%
R_{i}^{\mu\left(  \frac{1}{p}-\frac{1}{q}\right)  }\Vert\rho_{\varepsilon
}\Vert_{L^{a}}+C\frac{t}{R_{i}}R_{i}^{-(n-1-\mu)\left(  \frac{1}{p}-\frac
{1}{q}\right)  }\sum_{k=2}^{\infty}\left(  2^{1+\frac{n-1-\mu}{p}}\right)
^{-k}<\infty, \label{lem:Poisson_finitud}%
\end{equation}
because $1+\frac{n-1-\mu}{p}>0$.

Now we turn to the convergence (\ref{lem:Poiss_aprox_resultado1}). Assume that
$A$ is a $(q^{\prime},\mu)$-block supported in $B(x,R)$. Since $A\in
L^{q^{\prime}}(\mathbb{R}^{n-1}),$ the Poisson kernel provides an
approximation of the identity in $L^{q^{\prime}}$ (see, e.g., \cite[p.~242]%
{folland1999real}), hence
\[
\lim_{t\rightarrow0^{+}}\Vert P_{t}\ast A-A\Vert_{L^{q^{\prime}}}=0.
\]
For $\eta>0$, then there exists $\delta>0$ such that
\[
\Vert P_{t}\ast A-A\Vert_{L^{q^{\prime}}}\leq\frac{\eta}{2}(2R)^{-\mu
/q},\text{ for all }0<t<\delta.
\]
We decompose $\mathbb{R}^{n-1}$ as
\[
B_{1}=B(x^{\prime},2R)\text{ and }B_{k}=B(x^{\prime},2^{k}R)\setminus
B(x^{\prime},2^{k-1}R),\text{ for }k\geq2.
\]
For $k=1,$ define
\[
A_{1}=\frac{1}{\alpha_{1}}\chi_{B_{1}}\cdot(P_{t}\ast A-A)\text{ with }%
\alpha_{1}=\frac{\eta}{2}\text{.}%
\]
For $k\geq2$, set
\[
A_{k}=\alpha_{k}^{-1}\chi_{B_{k}}\cdot(P_{t}\ast A-A)\text{ with }\alpha
_{k}=C\frac{t}{R}\left(  2^{1+\frac{n-1-\mu}{q}}\right)  ^{-k}.
\]
Note that, for $y^{\prime}\in B_{k},$ we have $(P_{t}\ast A-A)(y^{\prime
})=(P_{t}\ast A)(y^{\prime})$ since $A$ vanishes outside $B(x^{\prime},R)$.
Arguing as in the estimate used in the proof of (\ref{lem:Poisson_finitud})
with $p=q$, one verifies that
\[
\left\Vert A_{k}\right\Vert _{L^{q^{\prime}}}\leq(2^{k}R)^{-\mu/q}.
\]
Hence, by the definition of the block norm, we obtain
\[
\Vert P_{t}\ast A-A\Vert_{\mathcal{H}_{q^{\prime},\mu}(\mathbb{R}^{n-1})}%
\leq\alpha_{1}+\sum_{k=2}^{\infty}\alpha_{k}=\frac{\eta}{2}+C\frac{t}{R}%
\sum_{k=2}^{\infty}\left(  2^{1+\frac{n-1-\mu}{q}}\right)  ^{-k}.
\]
Then, by choosing a sufficiently small $\delta>0$ (if necessary), we can
ensure that
\[
\Vert P_{t}\ast A-A\Vert_{\mathcal{H}_{q^{\prime},\mu}(\mathbb{R}^{n-1})}%
<\eta,\text{ for all }0<t<\delta,
\]
which completes the proof of (\ref{lem:Poiss_aprox_resultado1}).

Now we turn to estimate (\ref{lem:Poiss_aprox_resultado3-1}). Let $\psi
_{N}=\sum_{i=1}^{N}\lambda_{i}A_{i}$ with $N\in\mathbb{N}$ fixed, and let
$\rho_{\varepsilon}\in C_{c}^{\infty}(\mathbb{R}^{n-1})$ be a mollifier with
$\varepsilon>0$ to be chosen later. Then, we have that
\begin{equation}
|\langle P_{t}\ast v_{0},\psi\rangle|\leq|\langle P_{t}\ast v_{0},\psi
-\psi_{N}\rangle|+|\langle P_{t}\ast v_{0},\psi_{N}-\rho_{\varepsilon}\ast
\psi_{N}\rangle|+|\langle P_{t}\ast v_{0},\rho_{\varepsilon}\ast\psi
_{N}\rangle|. \label{aux-P-est-1}%
\end{equation}
Using duality (\ref{dualidad_M_H}) and estimate (\ref{eq:P_kernel_est2}), we
can estimate
\begin{align*}
|\langle P_{t}\ast v_{0},\psi-\psi_{N}\rangle|  &  \leq\Vert P_{t}\ast
v_{0}\Vert_{\mathcal{M}_{q,\mu}(\mathbb{R}^{n-1})}\Vert\psi-\psi_{N}%
\Vert_{\mathcal{H}_{q^{\prime},\mu}(\mathbb{R}^{n-1})}\\
&  \leq c_{n}t^{-\gamma_{1}}\left\Vert \frac{1}{|x|^{n-1-\gamma_{1}}}\ast
v_{0}\right\Vert _{\mathcal{M}_{q,\mu}(\mathbb{R}^{n-1})}\sum_{i=N+1}^{\infty
}|\lambda_{i}|\\
&  \leq Ct^{-\gamma_{1}}\Vert v_{0}\Vert_{\mathcal{M}_{p,\mu}(\mathbb{R}%
^{n-1})}\sum_{i=N+1}^{\infty}|\lambda_{i}|,
\end{align*}
where in the last inequality we have used (\ref{xiao_liu_n}) with $\gamma
_{1}=\frac{n-1-\mu}{p}-\frac{n-1-\mu}{q}$. For the second term in
(\ref{aux-P-est-1}), we proceed similarly and obtain
\[
|\langle P_{t}\ast v_{0},\psi_{N}-\rho_{\varepsilon}\ast\psi_{N}\rangle|\leq
Ct^{-\gamma_{1}}\Vert v_{0}\Vert_{\mathcal{M}_{p,\mu}(\mathbb{R}^{n-1})}%
\Vert\rho_{\varepsilon}\ast\psi_{N}-\psi_{N}\Vert_{\mathcal{H}_{q^{\prime}%
,\mu}(\mathbb{R}^{n-1})}.
\]

Next, observe that the convergence result in (\ref{lem:Poiss_aprox_resultado1}%
) remains valid if the Poisson kernel $P_{t}$ is replaced by any standard
mollifier $\rho_{\varepsilon}\in C_{c}^{\infty}(\mathbb{R}^{\,n-1})$ with
$\Vert\rho_{\varepsilon}\Vert_{L^{1}}=1$ and $\mathrm{supp}(\rho_{\varepsilon
})\subset B(0,\varepsilon)$. In fact, the proof becomes even simpler in this
case, since $\rho_{\varepsilon}$ vanishes outside $B(0,\varepsilon)$, whereas
the Poisson kernel merely decays and is not compactly supported. With this
observation in hand, we can choose $\varepsilon>0$ sufficiently small
(depending on $N$) so that
\[
\Vert\rho_{\varepsilon}\ast\psi_{N}-\psi_{N}\Vert_{\mathcal{H}_{q^{\prime}%
,\mu}(\mathbb{R}^{n-1})}\leq\sum_{i=N+1}^{\infty}|\lambda_{i}|,
\]
which is possible due to the convergence of mollifiers on finite linear
combinations of blocks. It then follows that
\[
|\langle P_{t}\ast v_{0},\psi_{N}-\rho_{\varepsilon}\ast\psi_{N}\rangle|\leq
Ct^{-\gamma_{1}}\Vert v_{0}\Vert_{\mathcal{M}_{p,\mu}(\mathbb{R}^{n-1})}%
\sum_{i=N+1}^{\infty}|\lambda_{i}|.
\]
For the third term in (\ref{aux-P-est-1}), we use (\ref{lem:Poisson_finitud})
to obtain
\begin{align*}
|\langle P_{t}\ast v_{0},\rho_{\varepsilon}\ast\psi_{N}\rangle|  &  =|\langle
v_{0},P_{t}\ast(\rho_{\varepsilon}\ast\psi_{N})\rangle|\\
&  \leq\Vert v_{0}\Vert_{\mathcal{M}_{p,\mu}(\mathbb{R}^{n-1})}\sum_{i=1}%
^{N}\Vert P_{t}\ast(\rho_{\varepsilon}\ast A_{i})\Vert_{\mathcal{H}%
_{p^{\prime},\mu}(\mathbb{R}^{n-1})}\\
&  \leq\Vert v_{0}\Vert_{\mathcal{M}_{p,\mu}(\mathbb{R}^{n-1})}\sum_{i=1}%
^{N}(C_{1,i}+C_{2,i}t),
\end{align*}
where $C_{1,i},C_{2,i}$ depend on the finite collection $\{R_{i}\}_{i\leq N}$,
and the mollifier.

Hence, combining the estimates for all three terms in (\ref{aux-P-est-1}), we
obtain
\[
|\langle P_{t}\ast v_{0},\psi\rangle|\leq Ct^{-\gamma_{1}}\Vert v_{0}%
\Vert_{\mathcal{M}_{p,\mu}(\mathbb{R}^{n-1})}\sum_{i=N+1}^{\infty}|\lambda
_{i}|+\sum_{i=1}^{N}(C_{1,i}+C_{2,i}t)\Vert v_{0}\Vert_{\mathcal{M}_{p,\mu
}(\mathbb{R}^{n-1})}%
\]
which establishes (\ref{lem:Poiss_aprox_resultado3-1}).

Finally, the derivation of estimate (\ref{lem:Poiss_aprox_resultado2}) follows
by adapting the arguments used for $P_{t}$ in
(\ref{lem:Poiss_aprox_resultado3-1}) and using (\ref{xiao_liu_n_1}) together
with the extension (\ref{extension1}) and $\bar{P}_{x_{n}+t}$ defined in
(\ref{P_extendido}). This completes the proof of the lemma, and we leave the
detailed verification of (\ref{lem:Poiss_aprox_resultado2}) to the reader.\qed

\subsection{Nonlinear estimates}

\hspace{0.3cm} \label{Sec4.2}This section is devoted to contractive-type
estimates for the nonlinear terms of the integral formulation (\ref{int1}).
These terms can be expressed as compositions of the linear operators
$I_{2},I_{3},I_{4}$ with the corresponding power-type nonlinearities.

The first lemma of this section deal with the composition $I_{2}[u_{0}%
|u_{0}|^{p_{2}-1}].$

\begin{lemma}
Let $p_{1},p_{2}\in(1,\infty)$, $\mu\in\lbrack0,n-1)$, and let $q_{0}%
,q_{1},q_{2},\alpha,\beta$ be as in (\ref{Cond-H0}). Suppose further that
$p_{2}<q_{2},$ $p_{1}=2p_{2}-1,$ and $0<\alpha<\beta p_{2}<1$. Then,
considering $C_{2}>0$ as in Lemma \ref{Lem-I2}, we have that
\begin{align}
&  \left\Vert I_{2}[u_{0}|u_{0}|^{p_{2}-1}]-I_{2}[v_{0}|v_{0}|^{p_{2}%
-1}]\right\Vert _{\mathcal{X}}\leq p_{2}C_{2}\left(  \sup_{t>0}t^{\beta
}\left\Vert (u-v)(\cdot,0,t)\right\Vert _{\mathcal{M}_{q_{2},\mu}%
(\mathbb{R}^{n-1})}\right) \nonumber\\
&  \quad\times\left[  \left(  \sup_{t>0}t^{\beta}\left\Vert u(\cdot
,0,t)\right\Vert _{\mathcal{M}_{q_{2},\mu}(\mathbb{R}^{n-1})}\right)
^{p_{2}-1}+\left(  \sup_{t>0}t^{\beta}\left\Vert v(\cdot,0,t)\right\Vert
_{\mathcal{M}_{q_{2},\mu}(\mathbb{R}^{n-1})}\right)  ^{p_{2}-1}\right]  ,
\label{nonlinear_2}%
\end{align}
for all $t^{\beta}u,t^{\beta}v\in L^{\infty}((0,\infty);\mathcal{M}_{q_{2}%
,\mu}(\mathbb{R}^{n-1})).$
\end{lemma}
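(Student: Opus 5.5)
The plan is to reduce the estimate to Lemma \ref{Lem-I2} by linearity of $I_{2}$, and then bound the nonlinearity pointwise and with the Morrey--H\"{o}lder inequality (\ref{Holder1}). Since $I_{2}$ is linear in its argument,
\[
I_{2}[u_{0}|u_{0}|^{p_{2}-1}]-I_{2}[v_{0}|v_{0}|^{p_{2}-1}]=I_{2}[g],\qquad g:=u_{0}|u_{0}|^{p_{2}-1}-v_{0}|v_{0}|^{p_{2}-1}.
\]
The hypotheses of Lemma \ref{Lem-I2} hold here: $l=q_{2}/p_{2}>1$ because $p_{2}<q_{2}$, together with $p_{1}=2p_{2}-1$ and $0<\alpha<\beta p_{2}<1$. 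Lemma \ref{Lem-I2} therefore gives
\[
\Vert I_{2}[g]\Vert_{\mathcal{X}}\leq C_{2}\sup_{t>0}t^{\beta p_{2}}\Vert g(\cdot,t)\Vert_{\mathcal{M}_{l,\mu}(\mathbb{R}^{n-1})}.
\]

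Next I will use the elementary inequality
\[
|a|a|^{p_{2}-1}-b|b|^{p_{2}-1}|\leq p_{2}|a-b|\,(|a|^{p_{2}-1}+|b|^{p_{2}-1}),
\]
which follows from the mean value theorem applied to $s\mapsto s|s|^{p_{2}-1}$, whose derivative is $p_{2}|s|^{p_{2}-1}$. Applied pointwise for each fixed $t$, this gives
\[
|g(\cdot,t)|\leq p_{2}|(u-v)(\cdot,0,t)|\,(|u(\cdot,0,t)|^{p_{2}-1}+|v(\cdot,0,t)|^{p_{2}-1}).
\]
The Morrey norm is a lattice norm, so it is monotone under pointwise domination.

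Each product is then estimated by (\ref{Holder1}) in dimension $n-1$, with $p_{3}=l=q_{2}/p_{2}$, the first factor in $\mathcal{M}_{q_{2},\mu}$, and the second factor in $\mathcal{M}_{q_{2}/(p_{2}-1),\mu}$, all with the same $\mu$. The exponent condition $\frac{p_{2}}{q_{2}}=\frac{1}{q_{2}}+\frac{p_{2}-1}{q_{2}}$ holds. Because $\mu$ is the same in every factor, the scaling condition is this identity multiplied by $n-1-\mu$, so it holds as well. We also have
\[
\Vert\,|u_{0}|^{p_{2}-1}\Vert_{\mathcal{M}_{q_{2}/(p_{2}-1),\mu}}=\Vert u_{0}\Vert_{\mathcal{M}_{q_{2},\mu}}^{p_{2}-1},
\]
which follows directly from the definition: $r^{-\mu(p_{2}-1)/q_{2}}\Vert|f|^{p_{2}-1}\Vert_{L^{q_{2}/(p_{2}-1)}}=(r^{-\mu/q_{2}}\Vert f\Vert_{L^{q_{2}}})^{p_{2}-1}$. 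When $p_{2}-1<1$ the exponent $q_{2}/(p_{2}-1)$ may exceed $q_{2}$, but it remains at least $1$, so the lemma still applies.

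Finally I will split the time weight as $t^{\beta p_{2}}=t^{\beta}\cdot t^{\beta(p_{2}-1)}$ and take the supremum over $t>0$. This gives
\[
\sup_{t>0}t^{\beta p_{2}}\Vert g(\cdot,t)\Vert_{\mathcal{M}_{l,\mu}}\leq p_{2}\Big(\sup_{t>0}t^{\beta}\Vert(u-v)(\cdot,0,t)\Vert_{\mathcal{M}_{q_{2},\mu}}\Big)\Big[\Big(\sup_{t>0}t^{\beta}\Vert u(\cdot,0,t)\Vert_{\mathcal{M}_{q_{2},\mu}}\Big)^{p_{2}-1}+\Big(\sup_{t>0}t^{\beta}\Vert v(\cdot,0,t)\Vert_{\mathcal{M}_{q_{2},\mu}}\Big)^{p_{2}-1}\Big],
\]
and combining this with the first display yields (\ref{nonlinear_2}). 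No step here is a real obstacle, since all the analytic work is already contained in Lemma \ref{Lem-I2}. The only points needing care are the admissibility check $l>1$ and the H\"{o}lder indices when $p_{2}<2$, both handled above.
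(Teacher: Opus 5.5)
Your proposal is correct and follows essentially the same route as the paper. You write the difference as $I_{2}[g]$, bound $g$ pointwise by $p_{2}|u_{0}-v_{0}|(|u_{0}|^{p_{2}-1}+|v_{0}|^{p_{2}-1})$, apply the Morrey--H\"{o}lder inequality with exponents $q_{2}/p_{2}$, $q_{2}$, $q_{2}/(p_{2}-1)$, invoke Lemma \ref{Lem-I2} with $l=q_{2}/p_{2}$, and split $t^{\beta p_{2}}=t^{\beta}\cdot t^{\beta(p_{2}-1)}$; your mean-value justification and the identity $\Vert\,|u_{0}|^{p_{2}-1}\Vert_{\mathcal{M}_{q_{2}/(p_{2}-1),\mu}}=\Vert u_{0}\Vert_{\mathcal{M}_{q_{2},\mu}}^{p_{2}-1}$ simply make explicit what the paper uses implicitly.
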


\textbf{Proof.} We begin by recalling the notation $h_{0}=h(x^{\prime},0,t)$
and defining
\begin{equation}
g(x^{\prime},t)=u(x^{\prime},0,t)|u(x^{\prime},0,t)|^{p_{2}-1}-v(x^{\prime
},0,t)|v(x^{\prime},0,t)|^{p_{2}-1}\text{, for }x^{\prime}\in\mathbb{R}%
^{n-1}\text{ and }t>0. \label{def-g-1}%
\end{equation}
A pointwise estimate yields
\[
|g(x^{\prime},t)|\leq p_{2}\,|(u-v)(x^{\prime},0,t)|\left(  |u(x^{\prime
},0,t)|^{p_{2}-1}+|v(x^{\prime},0,t)|^{p_{2}-1}\right)  .
\]
Applying H\"{o}lder's inequality in Morrey spaces (see (\ref{Holder1})) with
$1<\frac{q_{2}}{p_{2}}<\frac{q_{2}}{p_{2}-1}$ and the relation $\frac{1}%
{q_{2}/p_{2}}=\frac{1}{q_{2}}+\frac{1}{q_{2}/(p_{2}-1)}$, we obtain%
\begin{align}
\Vert g(\cdot,t)\Vert_{\mathcal{M}_{\frac{q_{2}}{p_{2}},\mu}(\mathbb{R}%
^{n-1})}  &  \leq p_{2}\Vert(u-v)(\cdot,0,t)\Vert_{\mathcal{M}_{q_{2},\mu
}(\mathbb{R}^{n-1})}\nonumber\\
&  \times\left(  \Vert|u(\cdot,0,t)|^{p_{2}-1}\Vert_{\mathcal{M}_{\frac{q_{2}%
}{p_{2}-1},\mu}(\mathbb{R}^{n-1})}+\Vert|v(\cdot,0,t)|^{p_{2}-1}%
\Vert_{\mathcal{M}_{\frac{q_{2}}{p_{2}-1},\mu}(\mathbb{R}^{n-1})}\right)
\nonumber\\
&  \leq p_{2}\Vert(u-v)(\cdot,0,t)\Vert_{\mathcal{M}_{q_{2},\mu}%
(\mathbb{R}^{n-1})}\left(  \Vert u(\cdot,0,t)\Vert_{\mathcal{M}_{q_{2},\mu
}(\mathbb{R}^{n-1})}^{p_{2}-1}+\Vert v(\cdot,0,t)\Vert_{\mathcal{M}_{q_{2}%
,\mu}(\mathbb{R}^{n-1})}^{p_{2}-1}\right)  , \label{c_nonlinear_2_1}%
\end{align}
for all $t>0.$ Now, the linear estimate (\ref{lineal_2}) with $g(x^{\prime
},t)$ defined in (\ref{def-g-1}) leads to
\begin{align}
\left\Vert I_{2}[u_{0}|u_{0}|^{p_{2}-1}]-I_{2}[v_{0}|v_{0}|^{p_{2}%
-1}]\right\Vert _{\mathcal{X}}  &  =\left\Vert I_{2}[g]\right\Vert
_{\mathcal{X}}\nonumber\\
&  \leq C_{2}\left(  \sup_{t>0}t^{\beta p_{2}}\left\Vert g(\cdot,t)\right\Vert
_{\mathcal{M}_{\frac{q_{2}}{p_{2}},\mu}(\mathbb{R}^{n-1})}\right)  .
\label{c_nonlinear_2_2}%
\end{align}
Noting that $t^{\beta p_{2}}=t^{\beta}\cdot(t^{\beta})^{p_{2}-1},$ we conclude
(\ref{nonlinear_2}) from estimates (\ref{c_nonlinear_2_1}) and
(\ref{c_nonlinear_2_2}).

\qed

In the sequel, we derive an estimate for the nonlinear term $I_{3}%
[u|u|^{p_{1}-1}].$

\begin{lemma}
Let $p_{1},p_{2}\in(1,\infty)$, $\mu\in\lbrack0,n-1)$, and let $q_{0}%
,q_{1},q_{2},\alpha,\beta$ be as in (\ref{Cond-H0}). Suppose further that
$p_{1}<q_{1},$ $p_{1}=2p_{2}-1,$ and $0<\beta<\alpha p_{1}<1$. Then, we have
the estimate
\begin{align}
&  \left\Vert I_{3}[u|u|^{p_{1}-1}]-I_{3}[v|v|^{p_{1}-1}]\right\Vert
_{\mathcal{X}}\leq p_{1}C_{3}\left(  \sup_{t>0}t^{\alpha}\left\Vert
(u-v)(\cdot,t)\right\Vert _{\mathcal{M}_{q_{1},\mu}(\mathbb{R}_{+}^{n}%
)}\right) \nonumber\\
&  \qquad\qquad\times\left[  \left(  \sup_{t>0}t^{\alpha}\left\Vert
u(\cdot,t)\right\Vert _{\mathcal{M}_{q_{1},\mu}(\mathbb{R}_{+}^{n})}\right)
^{p_{1}-1}+\left(  \sup_{t>0}t^{\alpha}\left\Vert v(\cdot,t)\right\Vert
_{\mathcal{M}_{q_{1},\mu}(\mathbb{R}_{+}^{n})}\right)  ^{p_{1}-1}\right]  ,
\label{nonlinear_3}%
\end{align}
for all $t^{\alpha}u,t^{\alpha}v\in L^{\infty}((0,\infty);\mathcal{M}%
_{q_{1},\mu}(\mathbb{R}_{+}^{n})),$ where $C_{3}>0$ is as in Lemma
\ref{Lem-I3}.
\end{lemma}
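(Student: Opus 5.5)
The argument mirrors the proof of (\ref{nonlinear_2}), with the boundary operator $I_{2}$ replaced by the interior operator $I_{3}$ and the linear estimate (\ref{lineal_2}) replaced by (\ref{lineal_3}). Set
\[
f(x,t)=u(x,t)|u(x,t)|^{p_{1}-1}-v(x,t)|v(x,t)|^{p_{1}-1},\quad x\in\mathbb{R}_{+}^{n},\ t>0,
\]
so that, by linearity of $I_{3}$, $I_{3}[u|u|^{p_{1}-1}]-I_{3}[v|v|^{p_{1}-1}]=I_{3}[f]$. The mean value theorem applied to $s\mapsto s|s|^{p_{1}-1}$ gives the pointwise bound
\[
|f(x,t)|\leq p_{1}|(u-v)(x,t)|\left(|u(x,t)|^{p_{1}-1}+|v(x,t)|^{p_{1}-1}\right).
\]

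Next, I would apply the H\"{o}lder inequality (\ref{Holder1}) in $\mathcal{M}_{\cdot,\mu}(\mathbb{R}_{+}^{n})$ with exponents $l=q_{1}/p_{1}$, $q_{1}$ and $q_{1}/(p_{1}-1)$, all with the same $\mu$. Both scaling relations hold, since $\frac{p_{1}}{q_{1}}=\frac{1}{q_{1}}+\frac{p_{1}-1}{q_{1}}$ and the second relation is the first multiplied by $n-\mu$. The assumption $p_{1}<q_{1}$ guarantees $l>1$, which is exactly what Lemma \ref{Lem-I3} requires. Combined with the identity $\||w|^{p_{1}-1}\|_{\mathcal{M}_{q_{1}/(p_{1}-1),\mu}}=\|w\|_{\mathcal{M}_{q_{1},\mu}}^{p_{1}-1}$, which follows directly from the definition of the Morrey norm, this yields for every $t>0$
\[
\|f(\cdot,t)\|_{\mathcal{M}_{l,\mu}(\mathbb{R}_{+}^{n})}\leq p_{1}\|(u-v)(\cdot,t)\|_{\mathcal{M}_{q_{1},\mu}}\left(\|u(\cdot,t)\|_{\mathcal{M}_{q_{1},\mu}}^{p_{1}-1}+\|v(\cdot,t)\|_{\mathcal{M}_{q_{1},\mu}}^{p_{1}-1}\right).
\]

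Finally, I would invoke (\ref{lineal_3}), whose hypotheses $l>1$, $p_{1}=2p_{2}-1$ and $0<\beta<\alpha p_{1}<1$ all appear in the statement. This gives $\|I_{3}[f]\|_{\mathcal{X}}\leq C_{3}\sup_{t>0}t^{\alpha p_{1}}\|f(\cdot,t)\|_{\mathcal{M}_{l,\mu}}$. Splitting the weight as $t^{\alpha p_{1}}=t^{\alpha}(t^{\alpha})^{p_{1}-1}$ and taking suprema factor by factor then gives (\ref{nonlinear_3}).

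There is no genuine obstacle here. The only points to check are that the H\"{o}lder exponents are admissible (all of them are at least $1$, because $p_{1}>1$ and $q_{1}>p_{1}$) and that $f$ is measurable in time with $t^{\alpha p_{1}}f\in L^{\infty}((0,\infty);\mathcal{M}_{l,\mu}(\mathbb{R}_{+}^{n}))$. The latter follows from the bound on $f$ above together with the assumption that $t^{\alpha}u,t^{\alpha}v\in L^{\infty}((0,\infty);\mathcal{M}_{q_{1},\mu}(\mathbb{R}_{+}^{n}))$.
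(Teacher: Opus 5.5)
Your proposal is correct and follows the paper's proof essentially step for step. The paper uses the same ingredients in the same order: the pointwise mean-value bound on $f$, the Morrey--H\"older splitting with exponents $q_{1}/p_{1}$, $q_{1}$, $q_{1}/(p_{1}-1)$, and then (\ref{lineal_3}) together with the factorization $t^{\alpha p_{1}}=t^{\alpha}(t^{\alpha})^{p_{1}-1}$. Your additional checks are accurate and make explicit details the paper leaves implicit: the admissibility of the exponents and the identity $\Vert |w|^{p_{1}-1}\Vert_{\mathcal{M}_{q_{1}/(p_{1}-1),\mu}}=\Vert w\Vert_{\mathcal{M}_{q_{1},\mu}}^{p_{1}-1}$.
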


\textbf{Proof.} Considering
\begin{equation}
f(x,t)=u(x,t)|u(x,t)|^{p_{1}-1}-v(x,t)|v(x,t)|^{p_{1}-1}, \label{def-f-1}%
\end{equation}
we have the pointwise estimate
\[
|f(x,t)|\leq p_{1}\,|u(x,t)-v(x,t)|\left(  |u(x,t)|^{p_{1}-1}+|v(x,t)|^{p_{1}%
-1}\right)  ,\text{ for }x\in\mathbb{R}_{+}^{n}\text{ and }t>0.
\]
By H\"{o}lder's inequality (\ref{Holder1}) with $1<\frac{q_{1}}{p_{1}}%
<\frac{q_{1}}{p_{1}-1}$ and $\frac{1}{q_{1}/p_{1}}=\frac{1}{q_{1}}+\frac
{1}{q_{1}/(p_{1}-1)}$, we can estimate the norm of $f$ as follows:%
\begin{align}
&  \Vert f(\cdot,t)\Vert_{\mathcal{M}_{\frac{q_{1}}{p_{1}},\mu}(\mathbb{R}%
_{+}^{n})}\nonumber\\
&  \leq p_{1}\Vert(u-v)(\cdot,t)\Vert_{\mathcal{M}_{q_{1},\mu}(\mathbb{R}%
_{+}^{n})}\left(  \Vert|u(\cdot,t)|^{p_{1}-1}\Vert_{\mathcal{M}_{\frac{q_{1}%
}{p_{1}-1},\mu}(\mathbb{R}_{+}^{n})}+\Vert|v(\cdot,t)|^{p_{1}-1}%
\Vert_{\mathcal{M}_{\frac{q_{1}}{p_{1}-1},\mu}(\mathbb{R}_{+}^{n})}\right)
\nonumber\\
&  \leq p_{1}\Vert(u-v)(\cdot,t)\Vert_{\mathcal{M}_{q_{1},\mu}(\mathbb{R}%
_{+}^{n})}\left(  \Vert u(\cdot,t)\Vert_{\mathcal{M}_{q_{1},\mu}%
(\mathbb{R}_{+}^{n})}^{p_{1}-1}+\Vert v(\cdot,t)\Vert_{\mathcal{M}_{q_{1},\mu
}(\mathbb{R}_{+}^{n})}^{p_{1}-1}\right)  . \label{c_nonlinear_3_1}%
\end{align}
Employing the linear estimate (\ref{lineal_3}) with $f(x,t)$ as in
(\ref{def-f-1}), we obtain
\begin{align}
\left\Vert I_{3}[u|u|^{p_{1}-1}-v|v|^{p_{1}-1}]\right\Vert _{\mathcal{X}}  &
=\left\Vert I_{3}[f]\right\Vert _{\mathcal{X}}\nonumber\\
&  \leq C_{3}\left(  \sup_{t>0}t^{\alpha p_{1}}\left\Vert f(\cdot
,t)\right\Vert _{\mathcal{M}_{\frac{q_{1}}{p_{1}},\mu}(\mathbb{R}_{+}^{n}%
)}\right)  . \label{c_nonlinear_3_2}%
\end{align}
Inserting (\ref{c_nonlinear_3_1}) into (\ref{c_nonlinear_3_2}) and observing
that $t^{\alpha p_{1}}=t^{\alpha}\cdot(t^{\alpha})^{p_{1}-1}$ yields
(\ref{nonlinear_3}).

\qed

Next, we turn to establishing an estimate for the nonlinear term
$I_{4}[u|u|^{p_{1}-1}]$.

\begin{lemma}
Let $p_{1},p_{2}\in(1,\infty)$, $\mu\in\lbrack0,n-2)$, and let $q_{0}%
,q_{1},q_{2},\alpha,\beta$ be as in (\ref{Cond-H0}). In addition, assume that
$\frac{n-\mu}{n-\mu-2}<p_{1}<q_{1}$, and $\alpha,\beta>0$. Then, considering
$C_{4}>0$ as in Lemma \ref{Lem-I4}, we have that
\begin{align}
&  \left\Vert I_{4}[u|u|^{p_{1}-1}]-I_{4}[v|v|^{p_{1}-1}]\right\Vert
_{\mathcal{X}}\nonumber\\
&  \leq p_{1}C_{4}\left(  \sup_{t>0}t^{\alpha}\left\Vert (u-v)(\cdot
,t)\right\Vert _{\mathcal{M}_{q_{1},\mu}(\mathbb{R}_{+}^{n})}+\sup
_{t>0}\left\Vert (u-v)(\cdot,t)\right\Vert _{\mathcal{M}_{q_{0},\mu
}(\mathbb{R}_{+}^{n})}\right) \nonumber\\
&  \times\left[  \left(  \sup_{t>0}\left\Vert u(\cdot,t)\right\Vert
_{\mathcal{M}_{q_{0},\mu}(\mathbb{R}_{+}^{n})}\right)  ^{p_{1}-1}+\left(
\sup_{t>0}\left\Vert v(\cdot,t)\right\Vert _{\mathcal{M}_{q_{0},\mu
}(\mathbb{R}_{+}^{n})}\right)  ^{p_{1}-1}\right]  \label{nonlinear_4}%
\end{align}
for all $u,v\in L^{\infty}((0,\infty);\mathcal{M}_{q_{0},\mu}(\mathbb{R}%
_{+}^{n}))$ with $t^{\alpha}u,t^{\alpha}v\in L^{\infty}((0,\infty
);\mathcal{M}_{q_{1},\mu}(\mathbb{R}_{+}^{n})).$
\end{lemma}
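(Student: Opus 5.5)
We follow the same scheme as in the two preceding nonlinear lemmas. Set
\[
f(x,t)=u(x,t)|u(x,t)|^{p_{1}-1}-v(x,t)|v(x,t)|^{p_{1}-1},
\]
so that $I_{4}[u|u|^{p_{1}-1}]-I_{4}[v|v|^{p_{1}-1}]=I_{4}[f]$ by linearity. By Lemma \ref{Lem-I4} it then suffices to bound two quantities: $\sup_{t>0}t^{\alpha}\Vert f(\cdot,t)\Vert_{\mathcal{M}_{l_{1},\mu}}$ with $l_{1}=\frac{(n-\mu)q_{1}}{n-\mu+2q_{1}}$, and $\sup_{t>0}\Vert f(\cdot,t)\Vert_{\mathcal{M}_{l_{2},\mu}}$ with $l_{2}=q_{0}/p_{1}$. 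The starting point is the pointwise bound $|f|\leq p_{1}|u-v|(|u|^{p_{1}-1}+|v|^{p_{1}-1})$.

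First we check the hypotheses of Lemma \ref{Lem-I4}. The condition $p_{1}>\frac{n-\mu}{n-\mu-2}$ gives $q_{0}=\frac{(n-\mu)(p_{1}-1)}{2}>p_{1}$, so $l_{2}>1$. For $l_{1}$ we have $\frac{1}{l_{1}}=\frac{1}{q_{1}}+\frac{2}{n-\mu}=\frac{1}{q_{1}}+\frac{p_{1}-1}{q_{0}}$. Thus $l_{1}>1$ is equivalent to $\frac{1}{q_{1}}+\frac{p_{1}-1}{q_{0}}<1$. Since $q_{1}>p_{1}$ and $q_{0}>p_{1}$, the left side is less than $\frac{1}{p_{1}}+\frac{p_{1}-1}{p_{1}}=1$, so $l_{1}>1$ holds. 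The restriction $\mu<n-2$ also guarantees that the Riesz exponent $2<n-\mu$ is admissible in (\ref{xiao_liu_n}).

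For the $l_{2}$ term, apply Hölder's inequality (\ref{Holder1}) with exponents $\frac{1}{l_{2}}=\frac{1}{q_{0}}+\frac{p_{1}-1}{q_{0}}$. All exponents share the same $\mu$, so the Morrey-index condition in (\ref{Holder1}) holds automatically. Together with $\Vert|u|^{p_{1}-1}\Vert_{\mathcal{M}_{q_{0}/(p_{1}-1),\mu}}=\Vert u\Vert_{\mathcal{M}_{q_{0},\mu}}^{p_{1}-1}$, this gives
\[
\Vert f(\cdot,t)\Vert_{\mathcal{M}_{l_{2},\mu}}\leq p_{1}\Vert(u-v)(\cdot,t)\Vert_{\mathcal{M}_{q_{0},\mu}}\left(\Vert u(\cdot,t)\Vert_{\mathcal{M}_{q_{0},\mu}}^{p_{1}-1}+\Vert v(\cdot,t)\Vert_{\mathcal{M}_{q_{0},\mu}}^{p_{1}-1}\right).
\]
For the $l_{1}$ term, apply Hölder with $\frac{1}{l_{1}}=\frac{1}{q_{1}}+\frac{p_{1}-1}{q_{0}}$. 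This puts $u-v$ in $\mathcal{M}_{q_{1},\mu}$ and $|u|^{p_{1}-1},|v|^{p_{1}-1}$ in $\mathcal{M}_{q_{0}/(p_{1}-1),\mu}$. Multiplying by $t^{\alpha}$ and placing the weight entirely on the difference factor yields
\[
t^{\alpha}\Vert f(\cdot,t)\Vert_{\mathcal{M}_{l_{1},\mu}}\leq p_{1}\,t^{\alpha}\Vert(u-v)(\cdot,t)\Vert_{\mathcal{M}_{q_{1},\mu}}\left(\Vert u(\cdot,t)\Vert_{\mathcal{M}_{q_{0},\mu}}^{p_{1}-1}+\Vert v(\cdot,t)\Vert_{\mathcal{M}_{q_{0},\mu}}^{p_{1}-1}\right).
\]

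Next we take suprema in $t>0$, add the two bounds, and insert the result into (\ref{lineal_4}). This gives (\ref{nonlinear_4}) with the constant $p_{1}C_{4}$. The only delicate point is the index bookkeeping for $l_{1}$: we must verify that the Hölder splitting with the $q_{0}$-norm on the powered factor is exactly the one matching $l_{1}$, which rests on the identity $2/(n-\mu)=(p_{1}-1)/q_{0}$. Once this is checked, the remaining steps are routine.
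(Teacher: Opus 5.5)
Your proposal is correct and follows essentially the same route as the paper: the pointwise bound on $f$, two Morrey--H\"older splittings with $|u|^{p_1-1},|v|^{p_1-1}$ placed in $\mathcal{M}_{q_0/(p_1-1),\mu}=\mathcal{M}_{(n-\mu)/2,\mu}$, and then the linear estimate of Lemma \ref{Lem-I4}. Your check that $l_1>1$ via $q_1>p_1$ and $q_0>p_1$ is a valid alternative to the paper's direct check $q_1>\frac{n-\mu}{n-\mu-2}$; the H\"older exponent $(n-\mu)/2$ is at least $1$ because $\mu<n-2$.
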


\textbf{Proof.} Again we consider $f$ as in (\ref{def-f-1}) and recall the
pointwise estimate%
\[
|f(x,t)|\leq p_{1}\,|u(x,t)-v(x,t)|\left(  |u(x,t)|^{p_{1}-1}+|v(x,t)|^{p_{1}%
-1}\right)  .
\]
In view of the estimate (\ref{lineal_4}) for $I_{4}$, we require bounds on
$f_{1}$ in two different Morrey norms. Using H\"{o}lder's inequality
(\ref{Holder1}) with $\frac{n-\mu}{2}>1,$ $\frac{(n-\mu)q_{1}}{n-\mu+2q_{1}%
}>1$ (since $\frac{n-\mu}{n-\mu-2}<q_{1}$), and the relation
\[
\frac{1}{\frac{(n-\mu)q_{1}}{n-\mu+2q_{1}}}=\frac{1}{q_{1}}+\frac{1}%
{\frac{n-\mu}{2}},
\]
we obtain%
\begin{align}
&  \Vert f(\cdot,t)\Vert_{\mathcal{M}_{\frac{(n-\mu)q_{1}}{n-\mu+2q_{1}},\mu
}(\mathbb{R}_{+}^{n})}\nonumber\\
&  \leq p_{1}\Vert u-v\Vert_{\mathcal{M}_{q_{1},\mu}(\mathbb{R}_{+}^{n}%
)}\left(  \Vert|u|^{p_{1}-1}\Vert_{\mathcal{M}_{\frac{n-\mu}{2},\mu
}(\mathbb{R}_{+}^{n})}+\Vert|v|^{p_{1}-1}\Vert_{\mathcal{M}_{\frac{n-\mu}%
{2},\mu}(\mathbb{R}_{+}^{n})}\right) \nonumber\\
&  \leq p_{1}\Vert u-v\Vert_{\mathcal{M}_{q_{1},\mu}(\mathbb{R}_{+}^{n}%
)}\left(  \Vert u\Vert_{\mathcal{M}_{q_{0},\mu}(\mathbb{R}_{+}^{n})}^{p_{1}%
-1}+\Vert v\Vert_{\mathcal{M}_{q_{0} ,\mu}(\mathbb{R}_{+}^{n})}^{p_{1}%
-1}\right)  . \label{c_nonlinear_4_1}%
\end{align}
Moreover, by (\ref{Holder1}) with $\frac{n-\mu}{2}>1$, $\frac{q_{0}}{p_{1}}>1$
(since $\frac{n-\mu}{n-\mu-2}<p_{1}$), and the relation $\frac{1}{q_{0}/p_{1}%
}=\frac{1}{q_{0}}+\frac{1}{(n-\mu)/2}$, it follows that
\begin{align}
&  \Vert f(\cdot,t)\Vert_{\mathcal{M}_{\frac{q_{0}}{p_{1}},\mu}(\mathbb{R}%
_{+}^{n})}\nonumber\\
&  \leq p_{1}\Vert u-v\Vert_{\mathcal{M}_{q_{0},\mu}(\mathbb{R}_{+}^{n}%
)}\left(  \Vert|u|^{p_{1}-1}\Vert_{\mathcal{M}_{\frac{n-\mu}{2},\mu
}(\mathbb{R}_{+}^{n})}+\Vert|v|^{p_{1}-1}\Vert_{\mathcal{M}_{\frac{n-\mu}%
{2},\mu}(\mathbb{R}_{+}^{n})}\right) \nonumber\\
&  \leq p_{1}\Vert u-v\Vert_{\mathcal{M}_{q_{0},\mu}(\mathbb{R}_{+}^{n}%
)}\left(  \Vert u\Vert_{\mathcal{M}_{q_{0} ,\mu}(\mathbb{R}_{+}^{n})}%
^{p_{1}-1}+\Vert v\Vert_{\mathcal{M}_{q_{0},\mu}(\mathbb{R}_{+}^{n})}%
^{p_{1}-1}\right)  . \label{c_nonlinear_4_2}%
\end{align}

Now, estimate (\ref{lineal_4}) with $f(x,t)$ as in (\ref{def-f-1}) yields
\begin{align}
&  \left\Vert I_{4}[u|u|^{p_{1}-1}]-I_{4}[v|v|^{p_{1}-1}]\right\Vert
_{\mathcal{X}}=\left\Vert I_{4}[f]\right\Vert _{\mathcal{X}}\nonumber\\
&  \leq C_{4}\left(  \sup_{t>0}t^{\alpha}\left\Vert f(\cdot,t)\right\Vert
_{\mathcal{M}_{\frac{(n-\mu)q_{1}}{n-\mu+2q_{1}},\mu}(\mathbb{R}_{+}^{n}%
)}+\sup_{t>0}\left\Vert f(\cdot,t)\right\Vert _{\mathcal{M}_{\frac{q_{0}%
}{p_{1}},\mu}(\mathbb{R}_{+}^{n})}\right)  \label{c_nonlinear_4_3}%
\end{align}
Considering (\ref{c_nonlinear_4_1}) and (\ref{c_nonlinear_4_2}) in conjunction
with (\ref{c_nonlinear_4_3}), we deduce (\ref{nonlinear_4}). \qed

\section{\bigskip Proofs of main results}

\label{Sec5}With the estimates developed in Sections \ref{Sec4.1} and
\ref{Sec4.2} in hand, we are now in a position to carry out the proof of the
theorems stated in Section \ref{Sec3}.

\subsection{Proof of Theorem \ref{theo}}

\label{Sec5.1}Let $\varepsilon>0$ and $\delta=\frac{\varepsilon}{C_{1}},$
where the constant $C_{1}$ is as in Lemma \ref{Lem-I1}, and recall that
$\widetilde{q}_{0}=(n-1-\mu)(p_{1}-1)/2.$ By hypotheses, we have that
\begin{equation}
\left\Vert \varphi\right\Vert _{\mathcal{M}_{\widetilde{q}_{0},\mu}%
(\mathbb{R}^{n-1})}\leq\delta. \label{valinicial_delta}%
\end{equation}
Also, consider the closed ball $D_{2\varepsilon}=\{u\in\mathcal{\mathcal{X}%
};\Vert u\Vert_{\mathcal{X}}\leq2\varepsilon\}$ equipped with the complete
metric $\mathcal{Z}(u,v)=\Vert u-v\Vert_{\mathcal{X}}$, where the norm
$\Vert\cdot\Vert_{\mathcal{X}}$ is defined in (\ref{norm1}). For some suitable
$\varepsilon>0$, we are going to show that the mapping
\begin{equation}
\Phi(u):=I_{1}[\varphi]+I_{2}[u_{0}|u_{0}|^{p_{2}-1}]+I_{3}[u|u|^{p_{1}%
-1}]+I_{4}[u|u|^{p_{1}-1}] \label{map-1}%
\end{equation}
is a contraction on $(D_{2\varepsilon},\mathcal{Z})$.

From condition (\ref{valinicial_delta}), the linear estimate (\ref{lineal_1}),
and estimates (\ref{nonlinear_2}), (\ref{nonlinear_3}) and (\ref{nonlinear_4})
with $v=0$, it follows that
\begin{align}
\left\Vert \Phi\lbrack u]\right\Vert _{\mathcal{\mathcal{X}}}  &  \leq
C_{1}\left\Vert \varphi\right\Vert _{\mathcal{M}_{\widetilde{q}_{0},\mu
}(\mathbb{R}^{n-1})}+p_{2}C_{2}\left(  \left\Vert u\right\Vert _{\mathcal{X}%
}\right)  ^{p_{2}}+p_{1}(C_{3}+C_{4})\left(  \left\Vert u\right\Vert
_{\mathcal{X}}\right)  ^{p_{1}}\nonumber\\
&  \leq C_{1}\delta+p_{2}C_{2}2^{p_{2}}\varepsilon^{p_{2}}+p_{1}(C_{3}%
+C_{4})2^{p_{1}}\varepsilon^{p_{1}}\nonumber\\
&  =2\varepsilon\left(  \frac{1}{2}+p_{2}C_{2}2^{p_{2}-1}\varepsilon^{p_{2}%
-1}+p_{1}(C_{3}+C_{4})2^{p_{1}-1}\varepsilon^{p_{1}-1}\right)  ,
\label{des_phi_linear}%
\end{align}
for all $u\in D_{2\varepsilon}$. Moreover, using (\ref{nonlinear_2}),
(\ref{nonlinear_3}) and (\ref{nonlinear_4}), we obtain that%

\begin{align}
&  \Vert\Phi(u)-\Phi(v)\Vert_{\mathcal{X}}\nonumber\\
&  \leq\left\Vert I_{2}[u_{0}|u_{0}|^{p_{2}-1}-v_{0}|v_{0}|^{p_{2}%
-1}]\right\Vert _{\mathcal{X}}+\left\Vert I_{3}[u|u|^{p_{1}-1}-v|v|^{p_{1}%
-1}]\right\Vert _{\mathcal{X}}+\left\Vert I_{4}[u|u|^{p_{1}-1}-v|v|^{p_{1}%
-1}]\right\Vert _{\mathcal{X}}\nonumber\\
&  \leq p_{2}C_{2}\left(  \Vert u-v\Vert_{\mathcal{X}}\right)  \left(  \Vert
u\Vert_{\mathcal{X}}^{p_{2}-1}+\Vert v\Vert_{\mathcal{X}}^{p_{2}-1}\right)
+p_{1}(C_{3}+C_{4})\left(  \Vert u-v\Vert_{\mathcal{X}}\right)  \left(  \Vert
u\Vert_{\mathcal{X}}^{p_{1}-1}+\Vert v\Vert_{\mathcal{X}}^{p_{1}-1}\right)
\nonumber\\
&  \leq2\left(  p_{2}C_{2}2^{p_{2}-1}\varepsilon^{p_{2}-1}+p_{1}(C_{3}%
+C_{4})2^{p_{1}-1}\varepsilon^{p_{1}-1}\right)  \left\Vert u-v\right\Vert
_{\mathcal{X}}\text{,} \label{des_phi_bilinear}%
\end{align}
for all $u,v\in D_{2\varepsilon}$.

Choosing now $\varepsilon>0$ such that
\begin{equation}
0<\eta=2\left(  p_{2}C_{2}2^{p_{2}-1}\varepsilon^{p_{2}-1}+p_{1}(C_{3}%
+C_{4})2^{p_{1}-1}\varepsilon^{p_{1}-1}\right)  <1, \label{q_menorq1}%
\end{equation}
and using (\ref{des_phi_linear}), we arrive at
\begin{align}
\left\Vert \Phi\lbrack u]\right\Vert _{\mathcal{X}}  &  \leq2\varepsilon
\left(  \frac{1}{2}+p_{2}C_{2}2^{p_{2}-1}\varepsilon^{p_{2}-1}+p_{1}%
(C_{3}+C_{4})2^{p_{1}-1}\varepsilon^{p_{1}-1}\right) \nonumber\\
&  <2\varepsilon\left(  \frac{1}{2}+\frac{1}{2}\right)  =2\varepsilon,\text{
for all }u\in D_{2\varepsilon}. \label{aux-proof1}%
\end{align}
Also, we obtain from (\ref{des_phi_bilinear}) and (\ref{q_menorq1}) that%
\begin{equation}
\left\Vert \Phi\lbrack u]-\Phi\lbrack v]\right\Vert _{\mathcal{X}}\leq
\eta\left\Vert u-v\right\Vert _{\mathcal{X}},\text{ for all }u\in
D_{2\varepsilon}\text{.} \label{aux-proof2}%
\end{equation}
Since $0<\eta<1$, estimates (\ref{aux-proof1}) and (\ref{aux-proof2}) imply
that $\Phi$ maps $D_{2\varepsilon}$ into itself and acts as a contraction on
this set. Consequently, by the Contraction Mapping Principle, there exists a
unique fixed point $u\in D_{2\varepsilon}$ of $\Phi$. This fixed point is
therefore the unique solution to the integral equation (\ref{int1}) satisfying
$\Vert u\Vert_{\mathcal{X}}\leq2\varepsilon$.

In next step, we address the continuous dependence on the initial data
$\varphi$. For that, let $u$ and $\tilde{u}\in$ $D_{2\varepsilon}$ be the
solutions corresponding to initial data $\varphi$ and $\tilde{\varphi}%
\in\mathcal{M}_{\widetilde{q}_{0},\mu}(\mathbb{R}^{n-1})$, respectively. Using
the linear estimate (\ref{lineal_1}) and the contractive estimate
(\ref{des_phi_bilinear}), we can estimate
\begin{align*}
\Vert u-\tilde{u}\Vert_{\mathcal{X}}  &  \leq\left\Vert I_{1}[\varphi
-\tilde{\varphi}]\right\Vert _{\mathcal{X}}+\left\Vert I_{2}[u_{0}%
|u_{0}|^{p_{2}-1}-\tilde{u}_{0}|\tilde{u}_{0}|^{p_{2}-1}]\right\Vert
_{\mathcal{X}}+\left\Vert I_{3}[u|u|^{p_{1}-1}-\tilde{u}|\tilde{u}|^{p_{1}%
-1}]\right\Vert _{\mathcal{X}}\\
&  +\left\Vert I_{4}[u|u|^{p_{1}-1}-\tilde{u}|\tilde{u}|^{p_{1}-1}]\right\Vert
_{\mathcal{X}}\\
&  \leq C_{1}\left\Vert \varphi-\tilde{\varphi}\right\Vert _{,\mathcal{M}%
_{\widetilde{q}_{0},\mu}(\mathbb{R}^{n-1})}+\eta\left\Vert u-\tilde
{u}\right\Vert _{\mathcal{X}},
\end{align*}
where $0<\eta<1$. Rearranging terms, it follows that
\[
\Vert u-\tilde{u}\Vert_{\mathcal{X}}\leq\frac{C_{1}}{1-\eta}\left\Vert
\varphi-\tilde{\varphi}\right\Vert _{\mathcal{M}_{\widetilde{q}_{0},\mu
}(\mathbb{R}^{n-1})}%
\]
which yields the desired Lipschitz continuity of the data-to-solution map.

Finally, we prove the convergence of the solution trace $u(x^{\prime}%
,x_{n},t)|_{x_{n}=0}$ to the initial data $\varphi$, as $t\rightarrow0^{+},$
in the weak-$\ast$ topology of $\mathcal{M}_{\widetilde{q}_{0},\mu}%
(\mathbb{R}^{n-1})$. Evaluating equation (\ref{int1}) at $x_{n}=0$, we have
that
\[
u_{0}=I_{1}^{0}[\varphi]+I_{2}^{0}[u_{0}|u_{0}|^{p_{2}-1}]+I_{3}%
^{0}[u|u|^{p_{1}-1}]+I_{4}^{0}[u|u|^{p_{1}-1}],
\]
where $I_{j}^{0}[\cdot]=I_{j}[\cdot]\big|_{x_{n}=0}$ for $j=1,2,3,4$. First
observe that $I_{4}^{0}[u|u|^{p_{1}-1}]=0$ since $G(x^{\prime},0,y)=0$.
Throughout this part of the proof, we use the notation $q=\widetilde{q}%
_{0}=\frac{(n-1-\mu)(p_{1}-1)}{2}$.

Let $\psi\in\mathcal{H}_{q^{\prime},\mu}(\mathbb{R}^{n-1})$ with block
decomposition $\psi=\sum_{i=1}^{\infty}\lambda_{i}A_{i}$ as in
(\ref{def_blocks_space}), where $\sum_{i=1}^{\infty}|\lambda_{i}|<\infty$.
Then,
\begin{equation}
\left\langle u_{0},\psi\right\rangle =\left\langle I_{1}^{0}[\varphi
],\psi\right\rangle +\left\langle I_{2}^{0}[u_{0}|u_{0}|^{p_{2}-1}%
],\psi\right\rangle +\left\langle I_{3}^{0}[u|u|^{p_{1}-1}],\psi\right\rangle
. \label{lim_sum_0}%
\end{equation}
We begin with the first term $I_{1}^{0}[\varphi]=I_{1}[\varphi](x^{\prime
},0,t)$. By Young's inequality, we have that%
\begin{equation}
\left\Vert I_{1}^{0}[\varphi]\right\Vert _{\mathcal{M}_{q,\mu}(\mathbb{R}%
^{n-1})}=\left\Vert P(\cdot,t)\ast\varphi(\cdot)\right\Vert _{\mathcal{M}%
_{q,\mu}(\mathbb{R}^{n-1})}\leq\left\Vert \varphi\right\Vert _{\mathcal{M}%
_{q,\mu}(\mathbb{R}^{n-1})}. \label{I_1^0_bound}%
\end{equation}
Defining $\psi_{N}=\sum_{i=1}^{N}\lambda_{i}A_{i}$, and using duality
(\ref{dualidad_M_H}) and (\ref{I_1^0_bound}), we obtain that \
\begin{align}
|\left\langle I_{1}^{0}[\varphi],\psi\right\rangle -\left\langle \varphi
,\psi\right\rangle |  &  \leq|\left\langle I_{1}^{0}[\varphi],\psi-\psi
_{N}\right\rangle |+|\left\langle \varphi,\psi-\psi_{N}\right\rangle
|+|\left\langle I_{1}^{0}[\varphi],\psi_{N}\right\rangle -\left\langle
\varphi,\psi_{N}\right\rangle |\nonumber\\
&  \leq2\Vert\varphi\Vert_{\mathcal{M}_{q,\mu}(\mathbb{R}^{n-1})}\sum
_{i=N+1}^{\infty}|\lambda_{i}|+|\left\langle I_{1}^{0}[\varphi],\psi
_{N}\right\rangle -\left\langle \varphi,\psi_{N}\right\rangle |. \label{lim_2}%
\end{align}
Since $\sum_{i=N+1}^{\infty}\lambda_{i}\rightarrow0$, as $N\rightarrow\infty$,
we can choose $N$ so large that
\begin{equation}
2\Vert\varphi\Vert_{\mathcal{M}_{q,\mu}(\mathbb{R}^{n-1})}\sum_{i=N+1}%
^{\infty}|\lambda_{i}|<\frac{\varepsilon}{2}. \label{lim_3}%
\end{equation}
Moreover, for each $(q^{\prime},\mu)$-block $A_{i}$, by
(\ref{lem:Poiss_aprox_resultado1}) we have that
\[
\lim_{t\rightarrow0^{+}}\Vert P_{t}\ast A_{i}-A_{i}\Vert_{\mathcal{H}%
_{q^{\prime},\mu}(\mathbb{R}^{n-1})}=0\text{, for all }i=1,...,N.
\]
Hence, we can choose $\delta(N)>0$ such that
\[
\Vert P_{t}\ast\psi_{N}-\psi_{N}\Vert_{\mathcal{H}_{q^{\prime},\mu}%
(\mathbb{R}^{n-1})}<\frac{\varepsilon}{2\Vert\varphi\Vert_{\mathcal{M}_{q,\mu
}(\mathbb{R}^{n-1})}},\text{ for all }0<t<\delta,
\]
and then
\begin{equation}
|\left\langle I_{1}^{0}[\varphi],\psi_{N}\right\rangle -\left\langle
\varphi,\psi_{N}\right\rangle |\leq\Vert\varphi\Vert_{\mathcal{M}_{q,\mu
}(\mathbb{R}^{n-1})}\Vert P_{t}\ast\psi_{N}-\psi_{N}\Vert_{\mathcal{H}%
_{q^{\prime},\mu}(\mathbb{R}^{n-1})}<\frac{\varepsilon}{2}. \label{lim_4}%
\end{equation}
Since $\varepsilon>0$ is arbitrary, combining (\ref{lim_3}) and (\ref{lim_4})
in (\ref{lim_2}) gives%
\begin{equation}
\lim_{t\rightarrow0^{+}}|\left\langle I_{1}^{0}[\varphi],\psi\right\rangle
-\left\langle \varphi,\psi\right\rangle |=0. \label{lim_1.0}%
\end{equation}

For the second term $I_{2}^{0}[u_{0}|u_{0}|^{p_{2}-1}]=I_{2}[u_{0}%
|u_{0}|^{p_{2}-1}](x^{\prime},0,t)$ in (\ref{lim_sum_0}), we apply
(\ref{lem:Poiss_aprox_resultado3-1}) with $p=\frac{q_{2}}{p_{2}}$, $1-\beta
p_{2}=\frac{n-\mu-1}{\frac{q_{2}}{p_{2}}}-\frac{n-\mu-1}{q}$ and $v_{0}%
=u_{0}|u_{0}|^{p_{2}-1}$ to obtain
\begin{align*}
|\left\langle I_{2}^{0}[u_{0}|u_{0}|^{p_{2}-1}],\psi\right\rangle |  &
\leq\int_{0}^{t}|\left\langle P_{t-s}\ast((u_{0}|u_{0}|^{p_{2}-1}%
)(s)),\psi\right\rangle |ds\\
&  \leq C_{0}\sum_{i=N+1}^{\infty}|\lambda_{i}|\int_{0}^{t}s^{-\beta p_{2}%
}(t-s)^{-1+\beta p_{2}}\left[  s^{\beta p_{2}}\Vert(u_{0}|u_{0}|^{p_{2}%
-1})(s)\Vert_{\mathcal{M}_{\frac{q_{2}}{p_{2}},\mu}(\mathbb{R}^{n-1})}\right]
ds\\
&  \qquad+\int_{0}^{t}s^{-\beta p_{2}}(C_{N}+D_{N}(t-s))\left[  s^{\beta
p_{2}}\Vert(u_{0}|u_{0}|^{p_{2}-1})(s)\Vert_{\mathcal{M}_{\frac{q_{2}}{p_{2}%
},\mu}(\mathbb{R}^{n-1})}\right]  ds.
\end{align*}
The change of variables $s\rightarrow tr$ yields
\begin{align}
|\left\langle I_{2}^{0}[u_{0}|u_{0}|^{p_{2}-1}],\psi\right\rangle |  &  \leq
C_{0}\sum_{i=N+1}^{\infty}|\lambda_{i}|\int_{0}^{1}r^{-\beta p_{2}%
}(1-r)^{-1+\beta p_{2}}dr\left[  \sup_{t>0}t^{\beta}\left\Vert u(\cdot
,0,t)\right\Vert _{\mathcal{M}_{q_{2},\mu}(\mathbb{R}^{n-1})}\right]  ^{p_{2}%
}\nonumber\\
&  \qquad+t^{1-\beta p_{2}}\int_{0}^{1}r^{-\beta p_{2}}(C_{N}+D_{N}%
t(1-r))dr\left[  \sup_{t>0}t^{\beta}\left\Vert u(\cdot,0,t)\right\Vert
_{\mathcal{M}_{q_{2},\mu}(\mathbb{R}^{n-1})}\right]  ^{p_{2}}. \label{lim_2.0}%
\end{align}
We now choose $N$ sufficiently large so that
\begin{equation}
C_{0}\sum_{i=N+1}^{\infty}|\lambda_{i}|\int_{0}^{1}r^{-\beta p_{2}%
}(1-r)^{-1+\beta p_{2}}dr\left[  \sup_{t>0}t^{\beta}\left\Vert u(\cdot
,0,t)\right\Vert _{\mathcal{M}_{q_{2},\mu}(\mathbb{R}^{n-1})}\right]  ^{p_{2}%
}<\frac{\varepsilon}{2}. \label{lim_2.1}%
\end{equation}
Since $t^{1-\beta p_{2}}\rightarrow0^{+}$ as $t\rightarrow0^{+}$ (because
$\beta p_{2}<1$), there exists $\delta=\delta(N)>0$ such that, for all
$0<t<\delta,$
\begin{equation}
t^{1-\beta p_{2}}\int_{0}^{1}r^{-\beta p_{2}}(C_{N}+D_{N}t(1-r))dr\left[
\sup_{t>0}t^{\beta}\left\Vert u(\cdot,0,t)\right\Vert _{\mathcal{M}_{q_{2}%
,\mu}(\mathbb{R}^{n-1})}\right]  ^{p_{2}}<\frac{\varepsilon}{2}.
\label{lim_2.2}%
\end{equation}
Considering (\ref{lim_2.1}) and (\ref{lim_2.2}) in (\ref{lim_2.0}), we obtain
\begin{equation}
\lim_{t\rightarrow0^{+}}|\left\langle I_{2}^{0}[u_{0}|u_{0}|^{p_{2}%
-1}](t),\psi\right\rangle |=0. \label{lim_2.final}%
\end{equation}

In the sequel we deal with the term $I_{3}^{0}[u|u|^{p_{1}-1}]=I_{3}%
[u|u|^{p_{1}-1}](x^{\prime},0,t)$ in (\ref{lim_sum_0}). For that, using
estimate (\ref{lem:Poiss_aprox_resultado2}) with $p=\frac{q_{1}}{p_{1}}$,
$2-\alpha p_{1}=\frac{n-\mu}{\frac{q_{1}}{p_{1}}}-\frac{n-\mu-1}{q}$ and
$v=u|u|^{p_{1}}$, we have that
\begin{align*}
|\left\langle I_{3}^{0}[u|u|^{p_{1}-1}],\psi\right\rangle |  &  \leq\int
_{0}^{t}\left\langle \int_{0}^{\infty}P_{y_{n}+t-s}\ast((u|u|^{p_{1}-1}%
)(\cdot,y_{n},s))dy_{n},\psi\right\rangle ds\\
&  \leq\tilde{C}_{0}\sum_{i=N+1}^{\infty}|\lambda_{i}|\int_{0}^{t}s^{-\alpha
p_{1}}(t-s)^{1-(2-\alpha p_{1})}\left[  s^{\alpha p_{1}}\Vert(u|u|^{p_{1}%
-1})(s)\Vert_{\mathcal{M}_{\frac{q_{1}}{p_{1}},\mu}(\mathbb{R}_{+}^{n}%
)}\right]  ds\\
&  \qquad+\int_{0}^{t}s^{-\alpha p_{1}}(\tilde{C}_{N}+\tilde{D}_{N}%
(t-s))\left[  s^{\alpha p_{1}}\Vert(u|u|^{p_{1}-1})(s)\Vert_{\mathcal{M}%
_{\frac{q_{1}}{p_{1}},\mu}(\mathbb{R}_{+}^{n})}\right]  ds.
\end{align*}
By the change $s\rightarrow tr,$ we get%
\begin{align}
|\left\langle I_{3}^{0}[u|u|^{p_{1}-1}],\psi\right\rangle |  &  \leq\tilde
{C}_{0}\sum_{i=N+1}^{\infty}|\lambda_{i}|\int_{0}^{1}r^{-\alpha p_{1}%
}(1-r)^{-1+\alpha p_{1}}dr\left[  \sup_{t>0}t^{\alpha}\left\Vert
u(\cdot,t)\right\Vert _{\mathcal{M}_{q_{1},\mu}(\mathbb{R}_{+}^{n})}\right]
^{p_{1}}\nonumber\\
&  \qquad+t^{1-\alpha p_{1}}\int_{0}^{1}r^{-\alpha p_{1}}(\tilde{C}_{N}%
+\tilde{D}_{N}t(1-r))dr\left[  \sup_{t>0}t^{\alpha}\left\Vert u(\cdot
,t)\right\Vert _{\mathcal{M}_{q_{1},\mu}(\mathbb{R}_{+}^{n})}\right]  ^{p_{1}%
}. \label{lim_3.0}%
\end{align}
Consider now $N$ sufficiently large so that
\begin{equation}
\tilde{C}_{0}\sum_{i=N+1}^{\infty}|\lambda_{i}|\int_{0}^{1}r^{-\alpha p_{1}%
}(1-r)^{-1+\alpha p_{1}}dr\left[  \sup_{t>0}t^{\alpha}\left\Vert
u(\cdot,t)\right\Vert _{\mathcal{M}_{q_{1},\mu}(\mathbb{R}_{+}^{n})}\right]
^{p_{1}}<\frac{\varepsilon}{2}. \label{lim_3.1}%
\end{equation}
Since $t^{1-\alpha p_{1}}\rightarrow0^{+}$ as $t\rightarrow0^{+}$ (because
$\alpha p_{1}<1$)$,$ we can choose $\delta=\delta(N)>0$ such that, for all
$0<t<\delta,$
\begin{equation}
t^{1-\alpha p_{1}}\int_{0}^{1}r^{-\alpha p_{1}}(\tilde{C}_{N}+\tilde{D}%
_{N}t(1-r))dr\left[  \sup_{t>0}t^{\alpha}\left\Vert u(\cdot,t)\right\Vert
_{\mathcal{M}_{q_{1},\mu}(\mathbb{R}_{+}^{n})}\right]  ^{p_{1}}<\frac
{\varepsilon}{2}. \label{lim_3.2}%
\end{equation}
Combining (\ref{lim_3.1}) and (\ref{lim_3.2}) in (\ref{lim_3.0}), we arrive
at
\begin{equation}
\lim_{t\rightarrow0^{+}}|\left\langle I_{3}^{0}[u|u|^{p_{1}-1}](t),\psi
\right\rangle |=0. \label{lim_3.final}%
\end{equation}
Then, letting $t\rightarrow0^{+}$ in (\ref{lim_sum_0}), and using
(\ref{lim_1.0}), (\ref{lim_2.final}) and (\ref{lim_3.final}), we conclude
that
\[
\lim_{t\rightarrow0^{+}}\left\langle u(\cdot,0,t),\psi\right\rangle
=\langle\varphi,\psi\rangle,\text{ for every }\psi\in\mathcal{H}_{q^{\prime
},\mu}(\mathbb{R}^{n-1}),
\]
which yields the desired weak-$\ast$ convergence. \qed

\subsection{Proof of Theorem \ref{cor:consequences}}

\label{Sec5.2}\textbf{Part (i) - Self-similarity. } By the proof of the
well-posedness result (see Subsection \ref{Sec5.1}), there exists a unique
$u\in D_{2\varepsilon}$ satisfying the integral equation (\ref{int1}), that
is,%
\[
u=I_{1}[\varphi]+I_{2}[u_{0}|u_{0}|^{p_{2}-1}]+I_{3}[u|u|^{p_{1}-1}%
]+I_{4}[u|u|^{p_{1}-1}]
\]
Defining the rescaled function $u_{\lambda}(x,t)=\lambda^{\frac{1}{p_{2}-1}%
}u(\lambda x,\lambda t)$, and using the scaling properties of the kernels
\[
P(\lambda x^{\prime},\lambda x_{n})=\lambda^{1-n}P(x^{\prime},x_{n})\text{ and
}G(\lambda x,\lambda y)=\lambda^{2-n}G(x,y),
\]
it is not difficult to show that
\[
u_{\lambda}=\lambda^{\frac{1}{p_{2}-1}}u(\lambda x,\lambda t)=I_{1}%
[\lambda^{\frac{1}{p_{2}-1}}\varphi(\lambda x^{\prime})]+I_{2}[(u_{\lambda
})_{0}|(u_{\lambda})_{0}|^{p_{2}-1}]+I_{3}[u_{\lambda}|u_{\lambda}|^{p_{1}%
-1}]+I_{4}[u_{\lambda}|u_{\lambda}|^{p_{1}-1}]
\]
Under the scaling hypothesis $\varphi(x^{\prime})=\lambda^{\frac{1}{p_{2}-1}%
}\varphi(\lambda x^{\prime})$, this can be simplified to%
\[
u_{\lambda}=I_{1}[\varphi]+I_{2}[(u_{\lambda})_{0}|(u_{\lambda})_{0}%
|^{p_{2}-1}]+I_{3}[u_{\lambda}|u_{\lambda}|^{p_{1}-1}]+I_{4}[u_{\lambda
}|u_{\lambda}|^{p_{1}-1}]
\]
Additionally, we observe that $\left\Vert u_{\lambda}\right\Vert
_{\mathcal{X}}=\left\Vert u\right\Vert _{\mathcal{X}}$. Thus both $u$ and
$u_{\lambda}$ are solutions of (\ref{int1}) in $D_{2\varepsilon}$. By
uniqueness, $u=u_{\lambda}$, as required.

\qed

\textbf{Part (ii) - Axial symmetry. } Recall that a rotation matrix preserves
the Euclidean norm, that is, $|M(x^{\prime})|=|x^{\prime}|$, and satisfies
$\det\left(  D_{x^{\prime}}M(x^{\prime})\right)  =1$. Let $u\in
D_{2\varepsilon}$ be the unique solution of the integral equation (\ref{int1}).

For $u_{M}(x,t)=u(M(x^{\prime}),x_{n},t)$, the invariance properties
\[
P(M(x^{\prime}-y^{\prime}),x_{n})=P(x^{\prime}-y^{\prime},x_{n})\text{ and
}G((M(x^{\prime}),x_{n}),(M(y^{\prime}),y_{n}))=G(x,y)
\]
yield
\[
u_{M}=I_{1}[\varphi\circ M]+I_{2}[(u_{M})_{0}|(u_{M})_{0}|^{p_{2}-1}%
]+I_{3}[u_{M}|u_{M}|^{p_{1}-1}]+I_{4}[u_{M}|u_{M}|^{p_{1}-1}].
\]
Now, using the assumption $\varphi\circ M=\varphi$, we deduce that
\[
u_{M}=I_{1}[\varphi]+I_{2}[(u_{M})_{0}|(u_{M})_{0}|^{p_{2}-1}]+I_{3}%
[u_{M}|u_{M}|^{p_{1}-1}]+I_{4}[u_{M}|u_{M}|^{p_{1}-1}].
\]
Using Morrey norms are invariant under rotations $M,$ it follows that
$\left\Vert u_{M}\right\Vert _{\mathcal{X}}=\left\Vert u\right\Vert
_{\mathcal{X}}$ and hence $u_{M}\in D_{2\varepsilon}.$ Consequently, since $u$
and $u_{M}$ are solutions of (\ref{int1}) in $D_{2\varepsilon}$, the
uniqueness property implies that $u=u_{M}$.

\qed

\textbf{Part (iii) - Positivity. }Assume that $\varphi\geq0$ on $\mathbb{R}%
^{n-1}$ and there exists a set $U\subset\mathbb{R}^{n-1}$ of positive measure
such that $\varphi>0$ on $U$. Let $u$ be the unique solution in
$D_{2\varepsilon}$ for (\ref{int1}) which was obtained as a fixed point of
$\Phi$ (see (\ref{map-1})). It follows that $u$ is the limit in $\mathcal{X}$
of the Picard sequence $\{u^{(k)}\}_{k\in\mathbb{N}}$, defined by%
\begin{equation}
u^{(k+1)}=\Phi\lbrack u^{(k)}]\text{ with }u^{(1)}=I_{1}[\varphi].
\label{aux-seq-1}%
\end{equation}

Recall the notation $u_{0}=(u)_{0}=u|_{x_{n}=0}.$Since the Poisson kernel $P$
is strictly positive and $\varphi>0$ on a set $U\subset\mathbb{R}^{n-1}$ of
positive measure, we obtain that
\[
u^{(1)}(x,t)=\int_{\mathbb{R}^{n-1}}P(x^{\prime}-y^{\prime},x_{n}%
+t)\varphi(y^{\prime})\,dy^{\prime}\geq\int_{U}P(x^{\prime}-y^{\prime}%
,x_{n}+t)\varphi(y^{\prime})\,dy^{\prime}>0,
\]
for a.e. $(x,t)\in\mathbb{R}_{+}^{n}\times(0,\infty)$. Also, $(u^{(1)}%
)_{0}(x^{\prime},t)>0$ a.e. in $\mathbb{R}^{n-1}\times(0,\infty).$ Note also
that the kernels of the integral operators $I_{j}$ in (\ref{int1}) are
nonnegative. It follows that $\Phi\lbrack v]\geq\Phi\lbrack w]$ a.e in
$\mathbb{R}_{+}^{n}\times(0,\infty)$ and $(\Phi\lbrack v])_{0}\geq(\Phi\lbrack
w])_{0}$ a.e. in $\mathbb{R}^{n-1}\times(0,\infty),$ when $v\geq w\geq0$ a.e
in $\mathbb{R}_{+}^{n}\times(0,\infty)$ and $v_{0}\geq w_{0}\geq0$ a.e. in
$\mathbb{R}^{n-1}\times(0,\infty).$

Now using the positivity properties of $u^{(1)}$ and that $u^{(2)}=\Phi\lbrack
u^{(1)}]$ and $\Phi\lbrack0]=u^{(1)},$ we obtain $u^{(2)}\geq$ $u^{(1)}>0$
a.e. in $\mathbb{R}_{+}^{n}\times(0,\infty)$ and $(u^{(2)})_{0}\geq$
$(u^{(1)})_{0}>0$ a.e. in $\mathbb{R}^{n-1}\times(0,\infty).$ Proceeding
inductively, for each $k\geq2,$ we conclude that
\[
u^{(k+1)}=\Phi\lbrack u^{(k)}]\geq\Phi\lbrack u^{(k-1)}]=u^{(k)}>0,\text{ a.e.
in }\mathbb{R}_{+}^{n}\times(0,\infty),
\]
and $(u^{(k+1)})_{0}\geq(u^{(k)})_{0}>0$ a.e. in $\mathbb{R}^{n-1}%
\times(0,\infty).$ As $u^{(k)}\rightarrow u$ in $\mathcal{X}$, we have (up to
a subsequence) the pointwise convergence almost everywhere in $\mathbb{R}%
_{+}^{n}\times(0,\infty)$ and in $\mathbb{R}^{n-1}\times(0,\infty).$ This
implies the corresponding nonnegativity of $u$ and $u_{0}.$ It follows that
$u=\Phi\lbrack u]\geq$ $\Phi\lbrack0]=u^{(1)}>0$ a.e. in $\mathbb{R}_{+}%
^{n}\times(0,\infty),$ and $u_{0}=(\Phi\lbrack u])_{0}\geq(\Phi\lbrack
0])_{0}=(u^{(1)})_{0}>0$ a.e. in $\mathbb{R}^{n-1}\times(0,\infty)$, as desired.

\qed

\subsection{Proof of Theorem \ref{teo-asy}}

\hspace{0.3cm}\label{Sec5.3}For simplicity, we define, for each $t>0,$ the
auxiliary norm
\begin{equation}
\left\Vert u(\cdot,t)\right\Vert _{X_{t}}=t^{\alpha}\left\Vert u(\cdot
,t)\right\Vert _{\mathcal{M}_{q_{1},\mu}(\mathbb{R}_{+}^{n})}+t^{\beta
}\left\Vert u(\cdot,0,t)\right\Vert _{\mathcal{M}_{q_{2},\mu}(\mathbb{R}%
^{n-1})}+\left\Vert u(\cdot,t)\right\Vert _{\mathcal{M}_{q_{0},\mu}%
(\mathbb{R}_{+}^{n})}. \label{aux-norm-t}%
\end{equation}
From assumption (\ref{cond-asymp1}), we have that
\begin{equation}
\lim_{t\rightarrow\infty}\left\Vert I_{1}[\varphi-\psi](\cdot,t)\right\Vert
_{X_{t}}=0. \label{cond-Ht-1}%
\end{equation}

Taking the difference between the integral equations satisfied by $u$ and $v$,
evaluating the norm (\ref{aux-norm-t}), computing the $\lim\sup_{t\rightarrow
\infty}$ in the resulting inequality, and then using (\ref{cond-Ht-1}), we
arrive at
\begin{align}
\lim\sup_{t\rightarrow\infty}\left\Vert (u-v)(\cdot,t)\right\Vert _{X_{t}}  &
\leq\lim\sup_{t\rightarrow\infty}\left\Vert I_{2}[u_{0}|u_{0}|^{p_{2}%
-1}](\cdot,t)-I_{2}[v_{0}|v_{0}|^{p_{2}-1}](\cdot,t)\right\Vert _{X_{t}%
}\nonumber\\
&  +\lim\sup_{t\rightarrow\infty}\left\Vert I_{3}[u|u|^{p_{1}-1}%
](\cdot,t)-I_{3}[v|v|^{p_{1}-1}](\cdot,t)\right\Vert _{X_{t}}\nonumber\\
&  +\lim\sup_{t\rightarrow\infty}\left\Vert I_{4}[u|u|^{p_{1}-1}%
](\cdot,t)-I_{4}[v|v|^{p_{1}-1}](\cdot,t)\right\Vert _{X_{t}}.
\label{aux-est-asy-1}%
\end{align}

We now estimate each term on the right-hand side of (\ref{aux-est-asy-1}).
First recall that the solutions $u$ and $v$ obtained in Theorem \ref{theo},
corresponding respectively to the data $\varphi$ and $\psi$, satisfy
\begin{equation}
\left\Vert u\right\Vert _{\mathcal{X}},\left\Vert v\right\Vert _{\mathcal{X}%
}\leq2\varepsilon. \label{aux-bound-sol}%
\end{equation}
Using estimates (\ref{c_lineares_2_1}), (\ref{c_lineares_2_2}) and
(\ref{c_lineares_2_3}) with $g(x^{\prime},t)$ as in (\ref{def-g-1}), and
making the change of the variable $s=rt,$ we obtain that
\begin{align}
&  \left\Vert I_{2}[u_{0}|u_{0}|^{p_{2}-1}](\cdot,t)-I_{2}[v_{0}|v_{0}%
|^{p_{2}-1}](\cdot,t)\right\Vert _{X_{t}}=\left\Vert I_{2}\left[  g\right]
(\cdot,t)\right\Vert _{X_{t}}\nonumber\\
&  \leq\int_{0}^{1}\left(  \bar{C}_{2,1}\frac{r^{-\beta p_{2}}}{(1-r)^{\alpha
-\beta p_{2}-1}}+\bar{C}_{2,2}\frac{r^{-\beta p_{2}}}{(1-r)^{1-\beta(p_{2}%
-1)}}+\bar{C}_{2,3}\frac{r^{-\beta p_{2}}}{(1-r)^{1-\beta p_{2}}}\right)
\nonumber\\
&  \qquad
\text{\ \ \ \ \ \ \ \ \ \ \ \ \ \ \ \ \ \ \ \ \ \ \ \ \ \ \ \ \ \ \ \ \ \ \ \ \ \ \ \ \ \ \ \ \ \ \ \ \ \ \ }%
\times\left(  (rt)^{\beta p_{2}}\Vert g(\cdot,rt)\Vert_{\mathcal{M}%
_{\frac{q_{2}}{p_{2}},\mu}(\mathbb{R}^{n-1})}\right)  dr.
\label{aux-stab-I2-1}%
\end{align}
Making $t\rightarrow\infty$ in the inequality (\ref{aux-stab-I2-1}) and using
(\ref{c_nonlinear_2_1}) and (\ref{aux-bound-sol}), we deduce that%
\begin{equation}
\lim\sup_{t\rightarrow\infty}\left\Vert I_{2}\left[  u_{0}|u_{0}|^{p_{2}%
-1}\right]  (\cdot,t)-I_{2}\left[  v_{0}|v_{0}|^{p_{2}-1}\right]
(\cdot,t)\right\Vert _{X_{t}}\leq2^{p_{2}}\varepsilon^{p_{2}-1}C_{2}%
p_{2}\left(  \lim\sup_{t\rightarrow\infty}t^{\beta}\left\Vert (u-v)(\cdot
,0,t)\right\Vert _{\mathcal{M}_{q_{2},\mu}(\mathbb{R}^{n-1})}\right)
\label{eq:asint_1}%
\end{equation}

Next we treat the parcel with $I_{3}$ in (\ref{aux-est-asy-1}). For that, we
consider $f(x,t)$ as in (\ref{def-f-1}) and proceed similarly to the proofs of
estimates (\ref{c_lineares_3_1}), (\ref{eq:est_lineales_3.2}) and
(\ref{c_lineares_3_3}) to obtain
\begin{align}
&  \lim\sup_{t\rightarrow\infty}\left\Vert I_{3}[u|u|^{p_{1}-1}]-I_{3}%
[v|v|^{p_{1}-1}](\cdot,t)\right\Vert _{X_{t}}=\lim\sup_{t\rightarrow\infty
}\left\Vert I_{3}[f](\cdot,t)\right\Vert _{X_{t}}\nonumber\\
&  \leq\lim\sup_{t\rightarrow\infty}\int_{0}^{1}\left(  C\frac{r^{-\alpha
p_{1}}}{(1-r)^{1-\alpha(p_{1}-1)}}+C\frac{r^{-\alpha p_{1}}}{(1-r)^{1-\alpha
p_{1}+\beta}}+C\frac{r^{-\alpha p_{1}}}{(1-r)^{1-\alpha p_{1}}}\right)
\nonumber\\
&  \qquad
\text{\ \ \ \ \ \ \ \ \ \ \ \ \ \ \ \ \ \ \ \ \ \ \ \ \ \ \ \ \ \ \ \ \ \ \ \ \ \ \ \ \ \ \ \ \ \ \ \ \ \ \ \ \ \ }%
\times\left(  (rt)^{\alpha p_{1}}\Vert f(\cdot,rt)\Vert_{\mathcal{M}%
_{\frac{q_{1}}{p_{1}},\mu}(\mathbb{R}_{+}^{n})}\right)  dr.\nonumber\\
&  \leq C_{3}p_{1}\left(  \lim\sup_{t\rightarrow\infty}t^{\alpha}\left\Vert
(u-v)(\cdot,t)\right\Vert _{\mathcal{M}_{q_{1},\mu}(\mathbb{R}_{+}^{n}%
)}\right)  \left[  \left(  2\varepsilon\right)  ^{p_{1}-1}+\left(
2\varepsilon\right)  ^{p_{1}-1}\right] \nonumber\\
&  =2^{p_{1}}\varepsilon^{p_{1}-1}C_{3}p_{1}\left(  \lim\sup_{t\rightarrow
\infty}t^{\alpha}\left\Vert (u-v)(\cdot,t)\right\Vert _{\mathcal{M}_{q_{1}%
,\mu}(\mathbb{R}_{+}^{n})}\right)  , \label{eq:asint_2}%
\end{align}
where above we have used (\ref{c_nonlinear_3_1}) and (\ref{aux-bound-sol}).

For the term $I_{4},$ we consider again $f(x,t)$ as in (\ref{def-f-1}), and
proceed analogously to the derivations of (\ref{eq:est_lineales_4.1}),
(\ref{eq:est_lineales_4.2}) and (\ref{eq:est_lineales_4.3}). Then, by applying
(\ref{c_nonlinear_4_1}), (\ref{c_nonlinear_4_2}) and (\ref{aux-bound-sol}), we
arrive at
\begin{align}
&  \lim\sup_{t\rightarrow\infty}\left\Vert I_{4}[u|u|^{p_{1}-1}](\cdot
,t)-I_{4}[v|v|^{p_{1}-1}](\cdot,t)\right\Vert _{X_{t}}=\lim\sup_{t\rightarrow
\infty}\left\Vert I_{4}[f](\cdot,t)\right\Vert _{X_{t}}\nonumber\\
&  \leq C_{4,1}p_{1}\left(  \lim\sup_{t\rightarrow\infty}t^{\alpha}\left\Vert
(u-v)(\cdot,t)\right\Vert _{\mathcal{M}_{q_{1},\mu}(\mathbb{R}_{+}^{n}%
)}\right)  \left[  \left(  2\varepsilon\right)  ^{p_{1}-1}+\left(
2\varepsilon\right)  ^{p_{1}-1}\right] \nonumber\\
&  +C_{4,2}p_{1}\left(  \lim\sup_{t\rightarrow\infty}\left\Vert (u-v)(\cdot
,t)\right\Vert _{\mathcal{M}_{q_{0},\mu}(\mathbb{R}_{+}^{n})}\right)  \left[
\left(  2\varepsilon\right)  ^{p_{1}-1}+\left(  2\varepsilon\right)
^{p_{1}-1}\right]  .\nonumber\\
&  =2^{p_{1}}\varepsilon^{p_{1}-1}C_{4,1}p_{1}\left(  \lim\sup_{t\rightarrow
\infty}t^{\alpha}\left\Vert (u-v)(\cdot,t)\right\Vert _{\mathcal{M}_{q_{1}%
,\mu}(\mathbb{R}_{+}^{n})}\right) \nonumber\\
&  +2^{p_{1}}\varepsilon^{p_{1}-1}C_{4,2}p_{1}\left(  \lim\sup_{t\rightarrow
\infty}\left\Vert (u-v)(\cdot,t)\right\Vert _{\mathcal{M}_{q_{0},\mu
}(\mathbb{R}_{+}^{n})}\right)  . \label{eq:asint_3}%
\end{align}
Finally, defining $\Gamma=\lim\sup_{t\rightarrow\infty}\left\Vert
(u-v)(\cdot,t)\right\Vert _{X_{t}}$ and putting together estimates
(\ref{aux-est-asy-1}), (\ref{eq:asint_1}), (\ref{eq:asint_2}) and
(\ref{eq:asint_3}) leads us to%
\begin{align}
0  &  \leq\Gamma=\lim\sup_{t\rightarrow\infty}\left\Vert (u-v)(\cdot
,t)\right\Vert _{X_{t}}\nonumber\\
&  \leq2^{p_{2}}\varepsilon^{p_{2}-1}C_{2}p_{2}\Gamma+2^{p_{1}}\varepsilon
^{p_{1}-1}C_{3}p_{1}\Gamma+2^{p_{1}}\varepsilon^{p_{1}-1}C_{4,1}p_{1}%
\Gamma+2^{p_{1}}\varepsilon^{p_{1}-1}C_{4,2}p_{1}\Gamma\nonumber\\
&  =[p_{2}C_{2}2^{p_{1}}\varepsilon^{p_{1}-1}+p_{1}(C_{3}+C_{4})2^{p_{1}%
}\varepsilon^{p_{1}-1}]\Gamma\nonumber\\
&  =\eta\Gamma\label{aux-stab-10}%
\end{align}
where $C_{4}=C_{4,1}+C_{4,2}$ and
\[
0<\eta=p_{2}C_{2}2^{p_{2}}\varepsilon^{p_{2}-1}+p_{1}(C_{3}+C_{4})2^{p_{1}%
}\varepsilon^{p_{1}-1}<1\text{ (by (\ref{q_menorq1})).}%
\]
This implies that $\Gamma=0$ and we are done.

\qed

\

\bigskip

\noindent\textbf{Conflict of Interest Statement:} The authors declare that
there are no conflicts of interest to disclose.

\

\noindent\textbf{Data availability statement:} This manuscript has no
associated data.


\bigskip

\begin{thebibliography}{99}                                                                                               %


\bibitem {adams2015morrey}D.~R. Adams, Morrey Spaces, Lecture Notes in Applied
and Numerical Harmonic Analysis, \textit{Birkh\"{a}user/Springer, Cham}, 2015.

\bibitem {almeida2018approximation}A.~Almeida and S.~Samko, Approximation in
Morrey spaces. \textit{Journal of Functional Analysis} \textbf{272} (6)
(2017), 2392--2411.

\bibitem {Fila1997}H.~Amann and M.~Fila, A Fujita-type theorem for the Laplace
equation with a dynamical boundary condition. \textit{Acta Math. Univ.
Comenian. (N.S.)} \textbf{66} (2) (1997), 321--328.

\bibitem {Arrieta-DIE2001}J. M. Arrieta, P. Quittner, and A.
Rodr\'{\i}guez-Bernal, Parabolic problems with nonlinear dynamical boundary
conditions and singular initial data. \textit{Differential Integral Equations}
\textbf{14} (12) (2001), 1487--1510.

\bibitem {Binz-2024}T. Binz and A. F. M. ter Elst, Dynamic boundary conditions
for divergence form operators with H\"{o}lder coefficients. \textit{Ann. Sc.
Norm. Super. Pisa Cl. Sci.} \textbf{25} (4) (2024), 2173--2199.

\bibitem {Crank1975}J. Crank, The mathematics of diffusion. Second edition.
\textit{Clarendon Press, Oxford,} 1975.

\bibitem {Ferreira}M.~F. de Almeida and L.~C.~F. Ferreira, On the
Navier-Stokes equations in the half-space with initial and boundary rough data
in Morrey spaces. \textit{J. Differential Equations} \textbf{254} (3) (2013), 1548--1570.

\bibitem {diaz1984aplicacion}J.~I.~D\'{\i}az and R.~Jim\'{e}nez,
Aplicaci\'{o}n de la teor\'{\i}a no lineal de semigrupos a un operador
pseudodiferencial. \textit{Actas VII CEDYA}, University of Granada, Spain,
137--142, 1984.

\bibitem {escher1992nonlinear}J.~Escher, Nonlinear elliptic systems with
dynamic boundary conditions. \textit{Math. Z.} \textbf{210} (1992), 413--439.

\bibitem {FerreiraSantana2024}L.~C.~F. Ferreira and M.~G. Santana, On the
Helmholtz decomposition in Morrey and block spaces. \textit{Math. Ann.}
\textbf{392} (3) (2025), 4315--4360.

\bibitem {Ferreira2}L. C. F. Ferreira, On a bilinear estimate in weak-Morrey
spaces and uniqueness for Navier-Stokes equations. \textit{J. Math. Pures
Appl.} 105 (2) (2016), 228--247.

\bibitem {fila2013large}M.~Fila, K.~Ishige, and T.~Kawakami, Large-time
behavior of solutions of a semilinear elliptic equation with a dynamical
boundary condition. \textit{Adv. Differential Equations} \textbf{18} (1--2)
(2013), 69--100.

\bibitem {fila2015existence}M.~Fila, K.~Ishige, and T.~Kawakami, Existence of
positive solutions of a semilinear elliptic equation with a dynamical boundary
condition. \textit{Calc. Var. Partial Differential Equations} \textbf{54} (2)
(2015), 2059--2078.

\bibitem {fila2016minimal}M.~Fila, K.~Ishige, and T.~Kawakami, Minimal
solutions of a semilinear elliptic equation with a dynamical boundary
condition. \textit{J. Math. Pures Appl.} \textbf{105} (6) (2016), 788--809.

\bibitem {fila2017exterior}M.~Fila, K.~Ishige, and T.~Kawakami, An exterior
nonlinear elliptic problem with a dynamical boundary condition. \textit{Rev.
Mat. Complut.} \textbf{30} (2) (2017), 281--312.

\bibitem {fila1997global}M.~Fila and P.~Quittner, Global solutions of the
Laplace equation with a nonlinear dynamical boundary condition. \textit{Math.
Methods Appl. Sci.} \textbf{20} (15) (1997), 1325--1333.

\bibitem {folland1999real}G.~B. Folland, Real Analysis. Modern techniques and
their applications. Second edition. Pure Appl. Math. (N. Y.). Wiley-Intersci.
Publ. \textit{John Wiley \& Sons, Inc., New York}, 1999.

\bibitem {giga2018dynamic}Y.~Giga and N.~Hamamuki, On a dynamic boundary
condition for singular degenerate parabolic equations in a half space,
\textit{NoDEA Nonlinear Differential Equations Appl.} \textbf{25} (6) (2018),
Paper No.~51, 39 pp.

\bibitem {Giga-CPDE-1989}Y. Giga and T. Miyakawa, Navier-Stokes flow in
$\mathbb{R}^{3}$ with measures as initial vorticity and Morrey spaces.
\textit{Comm. Partial Differential Equations} \textbf{14} (5) (1989), 577--618.

\bibitem {Giga-DIE-2021}Y. Giga, F. Onoue, and K. Takasao, A varifold
formulation of mean curvature flow with Dirichlet or dynamic boundary
conditions. \textit{Differential Integral Equations} \textbf{34} (1-2) (2021), 21--126.

\bibitem {hintermann1989evolution}T.~Hintermann, Evolution equations with
dynamic boundary conditions. \textit{Proc. Roy. Soc. Edinburgh Sect. A}
\textbf{113} (1--2) (1989), 43--60.

\bibitem {Kato1992}T.~Kato, Strong solutions of the Navier-Stokes equation in
Morrey spaces. \textit{Bol. Soc. Brasil. Mat. (N.S.)} \textbf{22} (2) (1992), 127--155.

\bibitem {kirane1992blow}M.~Kirane, Blow-up for some equations with semilinear
dynamical boundary conditions of parabolic and hyperbolic type,
\textit{Hokkaido Math. J.} \textbf{21} (2) (1992), 221--229.

\bibitem {kirane2004nonexistence}M.~Kirane, E.~Nabana, and S.~I. Pokhozhaev,
Nonexistence of global solutions to an elliptic equation with a dynamical
boundary condition, \textit{Bol. Soc. Paran. Mat.} \textbf{22} (2) (2004), 9--16.

\bibitem {Latorre-Segura2018}M. Latorre and S. Segura de Le\'{o}n, Elliptic
1-Laplacian equations with dynamical boundary conditions. \textit{J. Math.
Anal. Appl.} \textbf{464} (2) (2018), 1051--1081.

\bibitem {lions1969quelques}J.-L. Lions, Quelques M\'{e}thodes de
R\'{e}solution des Probl\`{e}mes aux Limites Non-Lin\'{e}aires. \textit{Dunod,
Paris; Gauthier-Villars, Paris}, 1969.

\bibitem {Liu}L.~Liu and J.~Xiao, A trace law for the Hardy-Morrey-Sobolev
space. \textit{J. Funct. Anal.} \textbf{274} (1) (2018), 80--120.

\bibitem {Liu_xiao}L.~Liu and J.~Xiao, Restricting Riesz-Morrey-Hardy
potentials. \textit{J. Differential Equations} \textbf{262} (11) (2017), 5468--5496.

\bibitem {Stein}E.~M. Stein, Singular integrals and differentiability
properties of functions. Princeton Mathematical Series, No. 30.
\textit{Princeton University Press, Princeton, NJ}, 1970.
\end{thebibliography}
\end{document}